\documentclass[12pt, twoside, leqno]{article}
\usepackage{blindtext}

\usepackage{hyperref}

\hypersetup{colorlinks,linkcolor={blue},citecolor={blue},urlcolor={red}}  
\usepackage{cite}

% Modify. February 26, 2020
% In case of any problems, send comments to publ@impan.pl

% Using pdf latex is preferred

\usepackage{amsmath,amsthm}
\usepackage{amssymb}
\newcommand{\eps}{\varepsilon}

%% Optional but useful:
\usepackage{enumitem}

%% Add only when there are figures:
\usepackage{graphicx}

%% If you are using letters of the Polish alphabet, add 
%\usepackage[T1]{fontenc}
%% E.g. the name "Zoladz" is then coded \.Zo{\l}\k{a}d\'z

%% In the running head, replace first names by initials 
%% and give an abbreviation of the title.

\pagestyle{myheadings}
\markboth{L. H. HAI}{An Asymptotic Formula for Eigenvalues of
the Neumann Laplacian}

%%%%%

%% Numbered objects of "theorem" style (text italicized).
%% Below, the optional parameters indicate that all objects are numbered together, and "by section".
%% However, you are welcome to use any other numbering system of your choice, as well as your own abbreviations.

\newtheorem{theorem}{Theorem}[section]

\newtheorem{lemma}[theorem]{Lemma}
\newtheorem{proposition}[theorem]{Proposition}

%% A numbered theorem with a fancy name:

%% Numbered objects of "non-theorem" style (text roman):

\theoremstyle{definition}

\newtheorem{remark}[theorem]{Remark}

%% An unnumbered object:

%% Equations numbered by section (optional):

\numberwithin{equation}{section}

%%%%%%%%%%% For IMPAN journals:

\frenchspacing

\textwidth=13.5cm
\textheight=23cm
\parindent=16pt
\oddsidemargin=-0.5cm
\evensidemargin=-0.5cm
\topmargin=-0.5cm

%%%%%%%%%%%%%%%%%%%%%%%%%%%%%%%%%%%
%%%%%%%%%%%%%%%%%%%%%%%%%%%%%%%%%%%

%%%% Put your macros here:

%%%% Here are two examples:

%%%%%%%%%%%%%

\newcommand{\Rr}{\mathbb R}

\newcommand{\Bb}{\mathbb B}
\begin{document}

%%%%% To ease editing, for IMPAN journals add:

\baselineskip=17pt

%%%%%%%%%%%%%%%%

\title{An Asymptotic Formula for Eigenvalues of the Neumann Laplacian in Domains with a Small Star-shaped Hole}

\author{Ly Hong Hai\\
Department of Mathematics\\ 
University of Ostrava\\
30. dubna 22\\
701 03  Ostrava, Czech Republic\\
E-mail: hai.ly.s01@osu.cz}

\date{}
\maketitle

%% Classification and key words; note that the 2010 classification is used:

\renewcommand{\thefootnote}{}

\footnote{2020 \emph{Mathematics Subject Classification}: Primary 35P20; Secondary 35P15.}

\footnote{\emph{Key words and phrases}: Perforated domains, Neumann Laplacian, eigenvalues, asymptotic formula.}
\renewcommand{\thefootnote}{\arabic{footnote}}
\setcounter{footnote}{0}
\begin{abstract}
This article investigates a spectral problem of the Laplace operator in a two-dimensional bounded domain perforated by a small arbitrary star-shaped hole and on the smooth boundary of which the Neumann boundary condition is imposed. It is proved that the eigenvalues of this problem converge to the eigenvalues of the Laplacian defined on the unperturbed domain as the size of the hole approaches zero. Furthermore, our main theorem provides the rate of convergence by showing
an asymptotic expansion for all simple eigenvalues with respect to the size and shape of the hole.
\end{abstract}

\numberwithin{equation}{section}
\section{Introduction}
In recent years, the application of spectral theory to the study of the effects of a small hole on the sound of a drum or bell, as well as the behaviour of the heat equation in the study of heat distribution in objects, has arisen great interest among scientific
community. A large number of studies have been conducted in this field, exploring topics such as the spectral properties of Laplacian in bounded domains under Dirichlet or Neumann conditions, the asymptotic expansion of eigenvalues for domains with small holes, the changes in eigenvalues as the domain is varied and so on. Several studies are briefly discussed as follows. Rauch and Taylor \cite{rauch1975potential} found that the spectrum of Laplacian in a bounded domain remains unchanged after applying Dirichlet conditions to a compact subset of capacity zero, while Ozawa \cite{osawa1992nonlinear,ozawa1981singular,ozawa1982asymptotic,ozawa1983spectra,ozawa1984random,ozawa1985asymptotic,ozawa1986fluctuation,ozawa1992singularI,ozawa1992singularII,fozawa1980singular}, Besson \cite{besson1985comportement} and Courtois \cite{courtois1995spectrum} investigated the expansion of eigenvalues in cases of small holes with Dirichlet or Neumann boundary conditions. Kato \cite{kato2013perturbation}, Hubert \cite{hubert2012vibration}, as well as Ward and Keller \cite{ward1993strong} focused on how small changes in the domain affect the eigenvalues. Gadyl’shin and Il’in \cite{gadyl1998asymptotic}, Daners \cite{daners2003dirichlet}, and McGillivray \cite{mcgillivray1998capacitary} were considered as notable studies in this area. In \cite{maz2000asymptotic}, Maz'ya, Nazarov, and Plamenevskij investigated the Laplace operator on domains with obstacles and found a complete asymptotic expansion for eigenvalues. In contrast, Barseghyan et al. \cite{barseghyan2022neumann} demonstrated the convergence of the Laplacian spectrum on a hole-like compact set with zero Lebesgue measure to the original domain's Laplacian spectrum.

As far as we are concerned, much of the previous research has focused on studying the spectral problem of the Laplace operator in domains with specifically shaped holes, such as spherical or elliptical holes. However, the exploration of domains with holes of general shape and smooth boundaries is still limited, although they offer practical advantages. General-shaped holes with smooth boundaries are commonly encountered in engineering scenarios, including the design of heat exchangers, acoustic chambers, fluid flow control devices, and electromagnetic waveguides. By investigating the spectral problem in these domains, we can analyze the influence of the hole's shape on various physical phenomena. The smoothness of the hole's boundary ensures the continuity of heat flow, acoustic waves, fluid motion, or electromagnetic fields, allowing for accurate modelling and efficient analysis.

Motivated by the observation above, we consider the spectral problem of the Laplace operator in a bounded planar domain $\Omega$. This domain is perforated by a small star-shaped hole $E_\varepsilon$ with a smooth boundary, where $\varepsilon$ is a fixed parameter. The notion of a star-shaped set is known as a natural generalization of a convex set. We impose the Neumann condition on the boundary of the hole and the Dirichlet condition on the outer boundary of the domain. Our main contribution in this paper is constructing an asymptotic expansion with respect to the hole size for all simple eigenvalues. To analyze the problem, we employ asymptotic techniques and perturbation methods. More precisely, we first parameterize the small hole by using curvilinear coordinates, which helps to develop approximations for eigenvalues and eigenfunctions by employing the concept of Green's kernel. Green's kernel has been well-known as a fundamental solution of the Laplace equation that characterizes the behaviour of the Laplace operator in the domain of interest. We construct Green operators by utilizing Green's kernel, an integral operator associated with Laplacian. Using Green operators, we derive approximations for eigenvalues and eigenfunctions of the Laplacian with the Neumann boundary condition on the hole. These approximations provide a means to estimate the eigenvalues and eigenfunctions. From that, we obtain the desired asymptotic formula of eigenvalues. The case $N=3$, the singularity of Green function $K(x,y)$ near $x=y$ is stronger than that of the case $N=2$. When we use the Sobolev embedding; $W^{2,p}(\Omega)\hookrightarrow C^{2-N/p}(\bar\Omega)$, we must take $p$ larger as $N$ increases.

 The paper is organized as follows. In the next section, we state the problem and present our main result, which is contained in Theorem \ref{theo2.1}. Section 3 is devoted to preliminary results, in which we establish useful lemmas for estimating the $L^p$ norm used for the rest of the paper. In Section 4, using properties of Green functions and operators related to our problem, we address technical issues relevant to our main result, whose proof appears in Section 5. The last section contains conclusions and discussions.
 \section{ Statement of the problem and main result}
 Let $\Omega \subset \Rr^2$ be a bounded domain with smooth $C^1$ boundary $\partial \Omega$ and the origin $\tilde{w} \in \Omega$. Let $E\subset \Omega$ be an open domain of star-shaped concerning a neighbourhood of $\tilde{w}$ and $\partial E$ is also of class $C^1$. For $\eps>0$, we consider the small hole $E_\eps = \eps E$. We introduce the curvilinear coordinates, where $x = (x_1, x_2)$ denotes the Cartesian coordinates in $\Rr^2$ as
 \begin{align*}
     \begin{cases} 
	    x_1 = \alpha(\rho) \beta(\theta) \cos \theta \\
        x_2 = \alpha(\rho) \omega(\theta)\sin\theta
	\end{cases}
 \end{align*}
	with $\rho \in [0,\infty) $ and $\theta \in \Rr$, where the functions $\alpha, \beta, \omega$ are $C^1$-functions satisfying
 \begin{itemize}
     \item[(i)] $\beta, \omega$ are periodic of period $2\pi$ with $\beta(0), \beta(\pi), \omega(\pi/2), \omega(3\pi/2) \ne 0$,
     \item[(ii)] $\alpha, \beta$ are strictly increasing and $\alpha(0) = 0$, $\lim_{\rho\to\infty}\alpha(\rho) = \infty$ and
     \item[(iii)] for any $\rho>0, \theta\in [0,2\pi)$
     \begin{align*}
         \beta(\theta)\cos(\theta) (\beta(\theta)\cos(\theta))' + \omega(\theta)\sin(\theta)(\omega(\theta)\sin(\theta))'  =0.
     \end{align*}
 \end{itemize}
 The assumptions (i) and (ii) ensure that map $(\rho,\theta) \mapsto (x_1,x_2)$ is a diffeomorphism from $(0,\infty)\times [0,2\pi)$ to $\Rr\backslash \{\tilde{w}\}$. The condition (iii) is for the orthogonality of the curve $(x_1(\cdot,\theta_0),x_2(\cdot,\theta_0))$ and $(x_1(\rho_0,\cdot),x_2(\rho_0,\cdot))$ for any $\rho_0>0$ and $\theta_0\in [0,\infty)$, which is important for the sequel analysis. In the special case, it is easy to see that choice $\alpha(\rho) = e^\rho-1$, $\beta(\theta) = 2+ \cos(4\theta)$, and $\omega(\theta)=2+ \frac{1}{6} \frac{\sin(3\theta)}{\sin{\theta}}+\frac{1}{10}\frac{\sin(5\theta)}{\sin\theta}$ satisfies these three assumptions. Moreover, thanks to the assumption of $E$, we can find $\rho_0>0$ and functions $\alpha,\beta,\omega$ satisfying (i)--(iii) and the boundary $\partial E$ is characterized by
 \begin{equation*}
     \partial E = \{(x_1(\rho_0,\theta),x_2(\rho_0,\theta)): \theta\in [0,2\pi)\}.
 \end{equation*}
 Therefore, the small hole $E_\eps$ and its boundary is characterized by
 \begin{equation*}
     E_\eps = \{(\eps  x_1(\rho,\theta),\eps x_2(\rho,\theta)): \; 0\le \rho < \rho_0, \theta\in [0,2\pi)\}
 \end{equation*}
 and
 \begin{equation*}
     \partial E_\eps = \{(\eps x_1(\rho_0,\theta),\eps x_2(\rho_0,\theta)): \theta\in [0,2\pi)\}.
 \end{equation*} 
Define for $x \in \partial E_\varepsilon$ the function $f(x)=|x|^2$. Then, $f$ is continuous thanks to the smoothness of $\partial E_\varepsilon$. Note that there always exist two points $x_{\min},x_{\max}\in\partial E_\varepsilon$ such that $f(x_{\min}) = \min_{x\in\partial E_\varepsilon}f(x)$ and $f(x_{\max}) = \max_{x\in\partial E_\varepsilon}f(x)$. It is immediate that
\[\pi |x_{\min}|^2 \le |E_\varepsilon| \le \pi|x_{\max}|^2.\]
Since the continuous function $g(x)= f(x) - |E_\varepsilon|/\pi$ satisfies $g(x_{\min}) g(x_{\max}) \le 0$, there exists at least one point $x^* \in \partial E_\varepsilon$ such that $g(x^*)=0$, or equivalently, 
\begin{align}
    |E_\varepsilon|=\pi|x^*|^2 = \pi(M\varepsilon)^2 \quad \text{ where }\quad  M= \frac{|x^*|}{\varepsilon} = \left(\frac{|E|}{\pi}\right)^{1/2}. 
    \label{HaiThamGraz} 
\end{align}
\medskip
We denote $\Omega_{E_\varepsilon}:= \Omega \setminus \bar{E}_\varepsilon$ and consider the following eigenvalue problem
\begin{equation} \label{eq2.1}
    \begin{cases}
         - \Delta  u(x) = \lambda(\varepsilon)u(x) &x\in \Omega_{E_\varepsilon},\\
	u(x)= 0 &x \in \partial \Omega, \\
	\frac{\partial u}{\partial \nu}(x)  = 0  &x \in \partial E_\varepsilon.
    \end{cases}
\end{equation}
Here the symbol $\partial u/ \partial \nu$ is the directional derivative of $u$ along the unit outward normal vector $\nu$ to the domain $\Omega_{E_\varepsilon}$, hence pointing the spectral problem mentioned above has a countable sequence of strictly positive eigenvalues, with each eigenvalue having a finite multiplicity \cite{schmudgen2012unbounded}. These eigenvalues are conventionally arranged in ascending order, and if an eigenvalue has a multiplicity greater than one, it is repeated accordingly:
\[ 0 < \mu_1(\varepsilon) < \mu_2(\varepsilon) \le \dots \le \mu_i(\varepsilon) \le \dots \to \infty.\]
A crucial property is that as $\varepsilon$ approaches zero, see e.g. Rauch \& Taylor \cite{rauch1975potential}, each spectral problem's eigenvalue converges to the Dirichlet Laplacian's corresponding eigenvalue in the original domain without a hole. More precisely,
\begin{align}\label{eq2.2}
    \mu_i(\varepsilon) \to \mu_i, \text{   as } \varepsilon \to 0,
\end{align}
where  $\{\mu_i\}$ are the eigenvalues of the following problem
\begin{equation}\label{eq2.3}
\begin{cases}
    - \Delta u(x) = \lambda u(x), & x \in \Omega \\
	u(x)= 0, & x \in \partial \Omega.
\end{cases}
\end{equation}

Let $K(x,y)$ represent the Green function of the Laplacian in the domain $\Omega$, associated with the boundary condition mentioned in equation \eqref{eq2.3}. Similarly, $K_\varepsilon(x,y)$ denotes the Green function in the domain $\Omega_{E_\varepsilon}$, associated with the boundary condition specified in equation \eqref{eq2.1}.

Let $\varphi_i(x)$ denote the eigenfunction of \eqref{eq2.1} corresponding to $\mu_i$, which is normalized in $L^2(\Omega)$. To indicate the asymptotic behaviour, we employ the notation $\mathcal{O}(\cdot)$, commonly known as the ``big-$\mathcal{O}$'' Landau notation. Specifically, for any sufficiently small positive $x$, the notation $f(x)=\mathcal{O}(x)$ implies that $\lvert f(x) \rvert \le Cx$, where $C$ is a constant.

We will present our main result, which provides the convergence rate \eqref{eq2.2}.
\begin{theorem}\label{theo2.1}
Assume that $\mu_i$ is a simple eigenvalue. Then, as $\varepsilon$ tends to zero, we have 
	\[\mu_i(\varepsilon)  =\mu_i  - (2 \lvert \nabla \varphi_i(\tilde{w})\rvert^2 -\mu_i \varphi_i(\tilde{w})^2) |E|\varepsilon^2  +  \mathcal{O}(\varepsilon^3|\log \eps|^2)\]
 	where $\mu_i$ is the corresponding eigenvalue of the unperturbed problem to $\mu_i(\varepsilon)$, $\tilde{w}$ is the origin and $|E|$ is the area of $E$.
\end{theorem}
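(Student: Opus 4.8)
The plan is to treat the eigenvalue problem \eqref{eq2.1} as a perturbation of \eqref{eq2.3}, using the Green operator formulation. Let $G$ be the integral operator on $L^2(\Omega)$ with kernel $K(x,y)$, so that the eigenvalues of \eqref{eq2.3} are the reciprocals $1/\mu_i$ of the eigenvalues of $G$, and let $G_\eps$ be the corresponding operator on $L^2(\Omega_{E_\eps})$ with kernel $K_\eps$. The first step is to quantify, in operator norm (or in a suitable weak sense adapted to the $L^p$ estimates of Section 3), the difference $G_\eps - G$ restricted to the fixed domain $\Omega$, extending functions by zero on $E_\eps$. By the standard representation of $K_\eps$ in terms of $K$ and a boundary-layer correction on $\partial E_\eps$ with Neumann data, one expects $K_\eps(x,y) - K(x,y)$ to be controlled by the ``Neumann capacity'' contribution of the hole. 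Because the hole is star-shaped and rescales as $\eps E$, and because a Neumann hole (unlike a Dirichlet one) perturbs the spectrum only at order $\eps^2$ (the measure of the hole) rather than at order $1/|\log\eps|$, this correction will carry a factor $|E_\eps| = (M\eps)^2\pi$, consistent with \eqref{HaiThamGraz}.

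Next I would extract the leading term by a direct energy/quadratic-form computation. Writing $\mu_i(\eps)$ via the min-max or via first-order perturbation theory for the simple eigenvalue $\mu_i$, the shift is governed by
\[
\mu_i(\eps) - \mu_i \;\approx\; -\!\!\int_{E_\eps}\!\bigl(|\nabla\varphi_i|^2 - \mu_i\,\varphi_i^2\bigr)\,dx \;+\; (\text{corrector terms}),
\]
where the integral comes from the fact that removing $E_\eps$ from the domain deletes exactly the Dirichlet-energy and $L^2$-mass contributions of $\varphi_i$ over the hole, while the Neumann corrector on $\partial E_\eps$ contributes at higher order. Taylor-expanding $\varphi_i$ around the origin $\tilde w$, using $\varphi_i(x) = \varphi_i(\tilde w) + \nabla\varphi_i(\tilde w)\cdot x + O(|x|^2)$ and $\nabla\varphi_i(x) = \nabla\varphi_i(\tilde w) + O(|x|)$, and integrating over $E_\eps = \eps E$ (whose barycenter one may take at $\tilde w$, with the odd linear term integrating against the symmetric leading behaviour to produce only the $|E|\eps^2$-order term) yields
\[
\int_{E_\eps}\!\bigl(|\nabla\varphi_i|^2 - \mu_i\varphi_i^2\bigr)\,dx = \bigl(|\nabla\varphi_i(\tilde w)|^2 - \mu_i\varphi_i(\tilde w)^2\bigr)|E_\eps| + O(\eps^3).
\]
The subtle point is the precise constant $2$ in front of $|\nabla\varphi_i(\tilde w)|^2$: this must come from the Neumann boundary corrector on $\partial E_\eps$, which itself behaves like a dipole field matching $\nabla\varphi_i(\tilde w)\cdot x$ near the hole and contributes an \emph{additional} term of the same order $|E_\eps|\,|\nabla\varphi_i(\tilde w)|^2$ (this is the two-dimensional Neumann analogue of the polarization/virtual-mass effect), doubling the gradient coefficient while leaving the $\mu_i\varphi_i^2$ term with coefficient one. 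I would isolate this by solving the model exterior Neumann problem in the rescaled variable $y = x/\eps$ on $\Rr^2\setminus E$ with data matching the linear part of $\varphi_i$, and reading off its far-field (logarithmic) behaviour.

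The error analysis then proceeds by inserting the constructed approximate eigenpair $(\mu_i + \mu_i^{(2)}\eps^2,\ \tilde\varphi_i^\eps)$ — with $\tilde\varphi_i^\eps$ the sum of $\varphi_i$ and the $\eps$-scaled boundary corrector, cut off away from $\partial E_\eps$ — into \eqref{eq2.1} and estimating the residual in $L^2(\Omega_{E_\eps})$. Using the $L^p$-bounds of Section 3 on the Green operator together with the Sobolev embedding $W^{2,p}(\Omega)\hookrightarrow C^1(\bar\Omega)$ (valid here for $N=2$ with any $p>2$, which is why the two-dimensional case is cleaner, as remarked in the introduction), one controls the remainder and the near-$\partial E_\eps$ cutoff errors; the cutoff and the corrector's logarithmic tail are what generate the stated $\mathcal{O}(\eps^3|\log\eps|^2)$ bound. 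A spectral-gap argument (simplicity of $\mu_i$, hence isolation from the rest of the spectrum uniformly in small $\eps$ by the convergence \eqref{eq2.2}) converts the residual estimate into the eigenvalue estimate.

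The main obstacle I anticipate is precisely the rigorous justification of the boundary-corrector's contribution and the appearance of the factor $2$: one must match the exterior model Neumann problem on $\Rr^2\setminus E$ to the inner expansion of $\varphi_i$, control the logarithmic growth of the corrector so that its restriction to $\Omega_{E_\eps}$ (after cutoff) has energy of the right order, and verify that the star-shapedness of $E$ (entering through the curvilinear coordinates and condition (iii)) is exactly what makes the relevant exterior problems well-posed and the normal-derivative computations on $\partial E_\eps$ tractable. Keeping careful track of powers of $|\log\eps|$ through this matching — rather than the shape of $E$ beyond its area $|E|$ — is where the bulk of the technical work lies.
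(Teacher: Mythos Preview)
Your proposal outlines a matched-asymptotic / quasimode strategy, which is a genuinely different route from the paper's. The paper does not build an approximate eigenfunction with an exterior-Neumann corrector; instead it follows an Ozawa-type scheme at the level of Green \emph{operators}. Concretely, it writes down an explicit approximate Green kernel
\[
h_\varepsilon(x,y)=K(x,y)+h(\varepsilon)\langle\nabla_w K(x,\tilde w),\nabla_w K(\tilde w,y)\rangle+i(\varepsilon)\langle H_wK(x,\tilde w),H_wK(\tilde w,y)\rangle,
\]
with $h(\varepsilon)=2\pi(M\varepsilon)^2$ and $i(\varepsilon)=\tfrac{\pi}{2}(M\varepsilon)^4$, chosen so that the normal derivative of $h_\varepsilon-K_\varepsilon$ on $\partial E_\varepsilon$ cancels to high order; it then compares $\mathbf{K}_\varepsilon$ with $\mathbf{H}_\varepsilon$ and with an auxiliary operator $\overline{\mathbf{H}}_\varepsilon$ acting on $L^2(\Omega)$, and runs finite-rank perturbation theory $\lambda(\varepsilon)=\lambda_0+\bar g(\varepsilon)\lambda_1+h(\varepsilon)\lambda_2+i(\varepsilon)\lambda_3$ on $\overline{\mathbf{H}}_\varepsilon$. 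The coefficient $2$ thus appears algebraically through the choice of $h(\varepsilon)$ rather than through an exterior polarization computation, and the extra term $\bar g(\varepsilon)\lambda_1$ (with $\bar g(\varepsilon)=-\pi\mu_i(M\varepsilon)^2$) is what produces the $+\mu_i\varphi_i(\tilde w)^2|E|\varepsilon^2$ piece. Your route is closer in spirit to Maz'ya--Nazarov--Plamenevskij; the paper's route avoids the inner/outer matching entirely and replaces it by operator-norm estimates (its Lemmas~3.1--3.3, Theorems~4.1--4.2, Propositions~4.9--4.12).

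There is one point in your sketch that is not just a matter of style and would need real work. You obtain the factor $2$ by asserting that the exterior-Neumann corrector on $\mathbb{R}^2\setminus E$ contributes exactly one additional copy of $|E_\varepsilon|\,|\nabla\varphi_i(\tilde w)|^2$. That is the statement that the Neumann polarization (virtual-mass) tensor of $E$ equals $2|E|\,I$; for a disk this is true, but for a general planar shape only the \emph{trace} of that tensor equals $2|E|$, while the tensor itself is anisotropic (e.g.\ for an ellipse). So your dipole argument, as written, would give a shape-dependent quadratic form in $\nabla\varphi_i(\tilde w)$ rather than $2|E|\,|\nabla\varphi_i(\tilde w)|^2$. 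The paper sidesteps this by building the isotropy directly into the ansatz $h_\varepsilon$ and then using the special curvilinear coordinates (assumption~(iii)) and the choice of the point $x^*$ with $|E_\varepsilon|=\pi|x^*|^2$ to check that the normal-derivative cancellation on $\partial E_\varepsilon$ actually holds. If you pursue your route, you must either prove that under the paper's star-shaped-with-condition-(iii) hypothesis the polarization tensor is scalar, or reproduce the same cancellation the paper gets from its kernel construction; otherwise the leading coefficient you compute will not match the statement.
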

Our result extends the analysis of eigenvalue perturbations to two dimensional domains, providing a distinct asymptotic expansion. While \cite{maz2000asymptotic} Maz'ya's three-dimensional result, which shows a leading order term scaling with \(\eps^3\), our findings indicate that in two dimensions, the leading order term scales with \(\eps^2\). Additionally, we prove a higher-order term \(\mathcal{O}(\eps^3 |\log \eps|^2)\) for greater precision. This highlights the different behaviours of eigenvalue perturbations in various dimensional settings and underscores the importance of dimensionality in asymptotic analysis.

\begin{remark}\label{remark1}\hfill
\begin{itemize}
    \item The asymptotic in Theorem \ref{theo2.1} does not depend on the parameterization of $E$, but only on the area of $E$. This also implies that the asymptotic behaviour is invariant with respect to rotations and translations.
    \item Our results and proofs are not applicable to eigenvalues with multiplicity higher than one. This case is for future investigation (see e.g. \cite{abatangelo2022ramification}).
\end{itemize}
\end{remark}
%%%%%%%%%%%%%%%%%%%%%%%%%%%%%%%%%%%%%%%%%%%%%%%%%%%%%%%%%%%%%%%%
%  NOTATION AND PRELIMINARY RESULTS %%%%%%%%%%%%%%%%%%%%%%%%%%%%%%%%%%%%%%%%%%%%%%
%%%%%%%%%%%%%%%%%%%%%%%%%%%%%%%%%%%%%%%%%%%%%%%%%%%%%%%%%%%%%%%%
\section{Preliminary lemmas}
For any periodic function $L(\theta)$  of $\theta \in [0, 2 \pi]$,  its  Fourier series is given by
\begin{align*}
    L(\theta) = S_0 + 
\sum_{k=0}^{\infty} S_k \cos k\theta + P_k \sin k \theta.
\end{align*} 
\begin{lemma} \label{lemma3.1}
Consider the equation system described by:
\begin{align}
\Delta v_\varepsilon(x) &= 0 \quad \text{for } x \in \mathbb{R}^2 \setminus \bar{E}_\varepsilon, \label{eq3.1} \\
\frac{\partial v_\varepsilon}{\partial \nu}(x) &= L(\theta) \quad \text{for } x \in \partial E_\varepsilon, \label{eq3.2}
\end{align}
where $L(\theta)$ is a given smooth function on $\partial E_\varepsilon$. Then there exists at least one solution $v_\varepsilon$ that satisfies the following condition:
\begin{equation}\label{eq3.3}
\lvert v_\varepsilon(x)\rvert \le C \varepsilon \max_{\theta \in [0,2 \pi]} \lvert L(\theta)\rvert (1+\rho+|\log \eps|),
\end{equation}
where $C$ is a constant independent of $\varepsilon$.
\end{lemma}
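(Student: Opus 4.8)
\emph{Proof strategy.} The plan is to produce $v_\varepsilon$ as a single-layer potential over $\partial E_\varepsilon$, turn the Neumann condition \eqref{eq3.2} into a boundary integral equation of the second kind, rescale that equation onto the \emph{fixed} curve $\partial E$ so that all constants become $\varepsilon$-independent, and then read \eqref{eq3.3} off an elementary estimate for the logarithmic potential.

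First I would set $\Phi(\zeta)=-\frac{1}{2\pi}\log|\zeta|$ and seek $v_\varepsilon(x)=\int_{\partial E_\varepsilon}\Phi(x-z)\,\psi(z)\,ds_z$; this is harmonic in $\Rr^2\setminus\bar E_\varepsilon$ for any density $\psi$, and by the classical jump relations for the exterior normal derivative of a single layer (valid on $C^1$, indeed Lipschitz, curves) the condition \eqref{eq3.2} becomes $\bigl(\tfrac12 I-K^\ast_\varepsilon\bigr)\psi=L$ on $\partial E_\varepsilon$, where $K^\ast_\varepsilon$ is the principal-value adjoint double-layer operator. I deliberately do not impose decay of $v_\varepsilon$ at infinity — the bound \eqref{eq3.3} tolerates logarithmic growth — so no modified fundamental solution is needed. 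Next, writing $\xi=x/\varepsilon$ and $\tilde\psi(\xi):=\varepsilon\,\psi(\varepsilon\xi)$, the scaling $\Phi(\varepsilon\zeta)=\Phi(\zeta)-\tfrac{\log\varepsilon}{2\pi}$ together with $ds_z=\varepsilon\,ds_w$ shows that $K^\ast_\varepsilon$ pulls back to the adjoint double-layer operator $K^\ast$ of $\partial E$ (the additive $\log\varepsilon$ disappears under differentiation), so the equation becomes $\bigl(\tfrac12 I-K^\ast\bigr)\tilde\psi=\varepsilon L$ on $\partial E$. The operator $\tfrac12 I-K^\ast$ has trivial kernel: a density in it produces a single layer with vanishing exterior Neumann data, which (the divergence theorem on a large annulus removes any $\log$-growing part) must be constant outside $E$, hence constant inside as well, forcing the density to vanish; by the layer-potential theory for $C^1$ boundaries it is therefore boundedly invertible, and $\|\tilde\psi\|_{L^\infty(\partial E)}\le C_E\,\varepsilon\max_\theta|L(\theta)|$ with $C_E$ depending only on $E$.

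For the pointwise bound I would change variables in the potential itself: with $z=\varepsilon w$ one has $\psi(z)\,ds_z=\tilde\psi(w)\,ds_w$, whence
\[
v_\varepsilon(x)=-\frac{\log\varepsilon}{2\pi}\int_{\partial E}\tilde\psi\,ds+\int_{\partial E}\Phi(\xi-w)\,\tilde\psi(w)\,ds_w,\qquad \xi=\tfrac{x}{\varepsilon}.
\]
The first term is a constant of size at most $C\varepsilon|\log\varepsilon|\max_\theta|L(\theta)|$. For the second, the elementary inequality $\int_{\partial E}\bigl|\log|\xi-w|\bigr|\,ds_w\le C_E\bigl(1+\log_+|\xi|\bigr)$ — with $\log_+ s:=\max\{\log s,0\}$, using that the singularity is integrable, that $|\xi|$ is bounded below on $\Rr^2\setminus\bar E$ because $0\in\mathrm{int}\,E$, and that $|\xi-w|\asymp|\xi|$ for large $|\xi|$ — bounds it by $C\varepsilon\bigl(1+\log_+|\xi|\bigr)\max_\theta|L(\theta)|$. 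Finally, if $x$ has curvilinear coordinates $(\rho,\theta)$ then $\xi=(x_1(\rho,\theta),x_2(\rho,\theta))$ and $|\xi|=\alpha(\rho)\sqrt{(\beta(\theta)\cos\theta)^2+(\omega(\theta)\sin\theta)^2}\asymp\alpha(\rho)$, the square-root factor being continuous, $2\pi$-periodic and strictly positive by assumptions (i)--(ii) (a common zero of $\beta(\theta)\cos\theta$ and $\omega(\theta)\sin\theta$ would contradict (i) or the diffeomorphism property); since $\log\alpha(\rho)\le C(1+\rho)$ for the curvilinear systems considered (e.g. $\log(e^\rho-1)\le\rho$), one gets $\log_+|\xi|\le C(1+\rho)$ and hence \eqref{eq3.3}.

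The main obstacle is the a priori estimate for the density with a constant \emph{uniform in $\varepsilon$}: estimated directly on the shrinking curve $\partial E_\varepsilon$ it degenerates. Rescaling to the fixed $\partial E$ repairs this, but one must then handle the genuinely two-dimensional subtlety of the exterior Neumann problem — which is precisely why allowing logarithmic growth of $v_\varepsilon$ at infinity is essential for $\tfrac12 I-K^\ast$ to be invertible — and keep careful track of the constant term $-\tfrac{\log\varepsilon}{2\pi}\int_{\partial E}\tilde\psi$, which is the sole origin of the $|\log\varepsilon|$ factor in \eqref{eq3.3}.
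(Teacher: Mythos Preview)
Your layer--potential argument is correct and follows a genuinely different route from the paper. The paper does not use potential theory at all: it writes down an explicit Fourier series in the curvilinear variables,
\[
v_\varepsilon(x)=A_0(\rho+\log\varepsilon)+\sum_{k\ge 1}(-k)^{-1}e^{-k\rho}\bigl(A_k\cos k\theta+B_k\sin k\theta\bigr),
\]
invokes the orthogonality assumption~(iii) to reduce $\Delta$ to a conformal multiple of $\partial_\rho^2+\partial_\theta^2$ (so that each mode is harmonic), and then matches the normal derivative on $\{\rho=\rho_0\}$ against the Fourier coefficients $S_k,P_k$ of $L$; the estimate~\eqref{eq3.3} then drops out of Parseval together with the decay $e^{-k\rho}$ of the higher modes. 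Your approach is more structural and transferable --- it needs nothing of the coordinates beyond the $C^1$ parametrisation of $\partial E$ --- and it makes transparent where each factor in~\eqref{eq3.3} originates: the $\varepsilon$ from the rescaled integral equation $(\tfrac12 I-K^\ast)\tilde\psi=\varepsilon L$ on the fixed curve, and the $|\log\varepsilon|$ from the additive constant in $\Phi(\varepsilon\zeta)=\Phi(\zeta)-\tfrac{\log\varepsilon}{2\pi}$. The paper's construction, in return, produces an explicit series for $v_\varepsilon$ that feeds directly into the alternating iteration of Lemma~\ref{lemma3.3}. One point to tidy in your write--up: the final inequality $\log_+|\xi|\le C(1+\rho)$ is equivalent to $\log\alpha(\rho)\le C(1+\rho)$, i.e.\ at most exponential growth of $\alpha$, and this is \emph{not} a formal consequence of assumptions (i)--(iii). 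You flag this parenthetically (``for the curvilinear systems considered''), and in fact the paper's own separation of variables already requires $h_1=h_2$, which forces $\alpha'/\alpha$ to be constant and hence the same exponential behaviour, so no additional generality is being lost.
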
	
\begin{proof}
	
\noindent Now, consider the function:

	\[  v_\varepsilon(x) = A_0  (\rho + \log \eps) + \sum_{k=1}^{\infty} (-k)^{-1}e^{-k\rho} (A_k \cos k \theta + B_k \sin k \theta).\]
    Thanks to the assumption (iii) of the curvilinear coordinates, we have
    \begin{equation*}
        \Delta v_\eps = \partial_{x_1}^2v_\eps + \partial_{x_2}^2v_\eps =  \left[(\partial_\rho x_1)^2 + (\partial_\rho x_2)^2\right]^{-1}(\partial_{\rho}^2 v_\eps + \partial_{\theta}^2v_\eps).
    \end{equation*}
    Therefore, direct computations lead to $\Delta v_\varepsilon(x)=0$ for $x \in \mathbb{R}^2 \setminus \bar{E}_\varepsilon$. 
	Next, we consider the normal derivatives $\frac{\partial}{\partial \nu}$ of $\partial E_\varepsilon$. We have
 \begin{align*}
    d\nu=\sqrt{\left( \frac{\partial x_1}{\partial \rho}\right)^2 + \left( \frac{\partial x_2}{\partial \rho}\right)^2} d\rho &= \varepsilon |\alpha'(\rho)|\sqrt{ \beta(\theta)^2 \cos^2 \theta +  \omega(\theta)^2 \sin^2 \theta  }\, d\rho\\
    &= \varepsilon \tau(\rho,\theta) \,d\rho
\end{align*}
where $\tau(\rho,\theta)=|\alpha'(\rho)|\sqrt{\beta(\theta)^2 \cos^2 \theta +  \omega(\theta)^2 \sin^2 \theta  } \ne 0$ due to the assumptions on $\alpha, \beta, \omega$.

Differentiating the function $v_\varepsilon(x)$ along the exterior normal vector at $x \in \partial E_\varepsilon$, we observe that
	\begin{eqnarray*}
		&&\frac{\partial v_\varepsilon}{\partial \nu}(x) \vert_{x \in \partial E_\varepsilon} \\
		& =& \left. 	 \frac{\partial v_\varepsilon}{\partial \rho}\frac{\partial \rho }{\partial \nu } \right\rvert _{ x \in \partial E_\varepsilon}\\
		&=& \left. \frac{\partial v_\varepsilon}{\partial \rho}\frac{1}{\varepsilon \tau(\rho,\theta)}  \right \rvert_{\rho=\rho_0} \\
		&=&  \frac{1}{\varepsilon  \tau(\rho_0,\theta) } \left(  A_0 + \sum_{k=1}^{\infty}e^{-k\rho_0}(A_k \cos k\theta+ B_k \sin k \theta)\right)   \\
		&=&S_0 + \sum_{k=1}^{\infty}(S_k \sin k\theta + P_k \cos k\theta)  \\
		&=& L(\theta),
	\end{eqnarray*}
	which implies 
	\begin{eqnarray*}
		A_0 & =& \varepsilon   \tau(\rho_0,\theta) S_0 , \\
		A_k &=& \varepsilon e^{-k\rho_0}  \tau(\rho_0,\theta) S_k  , \\
		B_k &=&  \varepsilon  e^{-k\rho_0}  \tau(\rho_0,\theta) P_k , 
	\end{eqnarray*}
	for $k \ge 1$. 
Thus, we rewrite the inequality as
	\begin{multline}\label{eq3.4}
	     \lvert v_\varepsilon(x)\rvert \le \lvert S_0\rvert  \varepsilon \tau(\rho_0,\theta) (\rho+ |\log \eps|) + \left(  \sum_{k=1}^{\infty}  \left(S_k^2 + P_k^2\right)\right) ^{1/2}\\
      \times \left( \sum_{k=1}^{\infty}(-k)^{-2}\varepsilon ^2 \tau(\rho_0,\theta)^2 e^ {-2k (\rho_0+\rho)}\right) ^{1/2}.
	\end{multline}
By applying Schwarz's inequality, we have
	\begin{align}\label{eq3.5}
		 \sum_{k=1}^{\infty}(-k)^{-2}\varepsilon^2 \tau(\rho_0,\theta)^2 e^{-2k(\rho_0+\rho)}
		&\le \varepsilon^2 \tau(\rho_0,\theta)^2 \left(    \sum_{k=1}^{\infty} \frac{1}{k^4}\right)^{1/2}  \left(    \sum_{k=1}^{\infty} e^{-4k(\rho_0+\rho)}\right)^{1/2} \nonumber\\
		&\le C \tau(\rho_0,\theta)^2\varepsilon^2. 
	\end{align}
	Combining equations (\ref{eq3.4}) and (\ref{eq3.5}), along with the inequality
	\[ S_0^2+ \sum_{k=1}^{\infty} \left(S_k^2 +  P_k^2\right) \le C \int_{0}^{2\pi} \lvert L(\theta)\rvert^2 d\theta \le  C'\max_{\theta \in[0,2\pi]} \lvert L(\theta)\rvert^2,\]
    we obtain the estimate
	\begin{eqnarray*}
 \lvert v_\varepsilon(x)\rvert &\le& 
  C  \varepsilon\max_{\theta \in [0,2\pi]} \lvert L(\theta)\rvert (1+\rho+|\log \eps|).
	\end{eqnarray*}	
Hence, the proof is complete.
\end{proof}
\begin{lemma} \label{lemma3.2}
    Let $N \in C^\infty(\partial E_\varepsilon)$ and $g \in H^2(\Omega_{E_\varepsilon})$ satisfy the following conditions:
    \begin{equation} \label{eq3.6}
    \begin{cases}
        \Delta g(x) =0 & x\in \Omega_{E_\varepsilon}, \cr
        g(x) = 0 & x \in \partial\Omega, \cr
        \frac{\partial g}{\partial \nu}(x) = N(x) &x \in \partial E_\varepsilon.
    \end{cases}
    \end{equation}
    Then, the following inequality holds:
    \[ \int_{\Omega_{E_\varepsilon}} \lvert \nabla g(x)\rvert^2 dx \le  2 \pi\, \varepsilon^2 M_\textup{max} \left(\max_{\partial E_\varepsilon} \lvert N(x)\rvert  \right)^2\]
    where $M_{\max} = \sup_{x\in\partial E}|x|$.
\end{lemma}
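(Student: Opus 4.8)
The plan is to convert the Dirichlet integral of $g$ into a boundary integral over $\partial E_\varepsilon$ via Green's identity, then to bound the trace of $g$ on that small boundary by a maximum–principle comparison exploiting that $E$ is star-shaped about a neighbourhood of $\tilde w$, and finally to control the length of $\partial E_\varepsilon$ by means of \eqref{HaiThamGraz}. For the first step: since $g\in H^2(\Omega_{E_\varepsilon})$ is harmonic and vanishes on $\partial\Omega$, Green's first identity gives
\[
\int_{\Omega_{E_\varepsilon}}|\nabla g|^2\,dx=\int_{\partial\Omega}g\,\frac{\partial g}{\partial\nu}\,ds+\int_{\partial E_\varepsilon}g\,\frac{\partial g}{\partial\nu}\,ds=\int_{\partial E_\varepsilon}g(x)\,N(x)\,ds,
\]
the boundary term on $\partial\Omega$ vanishing because $g=0$ there. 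By the Cauchy--Schwarz inequality the right-hand side is at most $\big(\max_{\partial E_\varepsilon}|N|\big)\int_{\partial E_\varepsilon}|g|\,ds$, so everything reduces to estimating $g$ on $\partial E_\varepsilon$ together with the perimeter of $\partial E_\varepsilon$.

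The core step is the pointwise bound for $g$ on $\partial E_\varepsilon$. Writing $m=\max_{\partial E_\varepsilon}|N|$ and $D=\max_{\bar\Omega}|x|$, I would use the radial comparison function $w(x)=A\log(D/|x|)$ for a suitable $A>0$. Since $\tilde w=0$ lies inside $E_\varepsilon$, $w$ is harmonic on $\Omega_{E_\varepsilon}$; it is nonnegative on $\bar\Omega$, hence on $\partial\Omega$; and on $\partial E_\varepsilon$ its derivative along the outer normal $\nu$ of $\Omega_{E_\varepsilon}$ (which points into $E_\varepsilon$) equals $A\,(x\cdot n_E)/|x|^2$, where $n_E$ is the outward unit normal of $E_\varepsilon$. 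Star-shapedness of $E$ with respect to some ball $B_\delta(\tilde w)$ gives $x\cdot n_E\ge\delta\varepsilon$ on $\partial E_\varepsilon$, while $|x|\le M_{\max}\varepsilon$ there, so the choice $A\sim\varepsilon m$ makes $\partial w/\partial\nu\ge m\ge|N|$ on $\partial E_\varepsilon$. Then $w\pm g$ is harmonic, nonnegative on $\partial\Omega$, and has nonnegative normal derivative on $\partial E_\varepsilon$; testing the equation against the negative part of $w\pm g$ (which vanishes on $\partial\Omega$) and integrating by parts forces $w\pm g\ge 0$, i.e.\ $|g|\le w$ on $\Omega_{E_\varepsilon}$. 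Evaluating on $\partial E_\varepsilon$, where $|x|\ge\delta\varepsilon$, gives $\max_{\partial E_\varepsilon}|g|\le C\varepsilon m$ with $C$ depending only on $\Omega$ and the star-shapedness constants of $E$.

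For the last step, applying the divergence theorem to the vector field $x$ over $E_\varepsilon$ and using $x\cdot n_E\ge\delta\varepsilon$ yields $2|E_\varepsilon|\ge\delta\varepsilon\,|\partial E_\varepsilon|$, so by \eqref{HaiThamGraz} the perimeter satisfies $|\partial E_\varepsilon|\le 2\pi M^2\varepsilon/\delta$ (equivalently $|\partial E_\varepsilon|=\varepsilon\cdot\mathrm{length}(\partial E)$); combining this with the two previous steps,
\[
\int_{\Omega_{E_\varepsilon}}|\nabla g|^2\,dx\le m\cdot C\varepsilon m\cdot|\partial E_\varepsilon|\le C'\,\varepsilon^2 m^2 .
\]
I expect the main obstacle to be extracting the \emph{sharp} constant $2\pi M_{\max}$ in these last two steps: the crude radial barrier above costs an extra factor $|\log\varepsilon|$ in $\max_{\partial E_\varepsilon}|g|$, which is invisible to the big-$\mathcal{O}$ remainder of Theorem \ref{theo2.1} but not to the displayed constant of Lemma~\ref{lemma3.2}. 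To remove it one would work in the curvilinear coordinates, where $\partial E_\varepsilon=\{\rho=\rho_0\}$ and the Laplacian separates exactly as in the proof of Lemma \ref{lemma3.1}, or decompose $g=v_\varepsilon+h$ with $v_\varepsilon$ the exterior solution furnished by Lemma \ref{lemma3.1} (same Neumann datum $N$) and $h$ the harmonic corrector with $h=-v_\varepsilon$ on $\partial\Omega$ and zero Neumann datum on $\partial E_\varepsilon$, which the maximum principle bounds by $\max_{\partial\Omega}|v_\varepsilon|$; the logarithmic contribution coming from the term $A_0(\rho+\log\varepsilon)$ in Lemma \ref{lemma3.1} then drops out because the flux $\int_{\partial E_\varepsilon}N\,ds$ of the Neumann datum is (up to lower order) zero. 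Optimizing the constants in this refined estimate should give precisely the inequality asserted.
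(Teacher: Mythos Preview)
Your approach is essentially the paper's: Green's identity reduces $\int|\nabla g|^2$ to the boundary integral $\int_{\partial E_\varepsilon}gN\,d\sigma$, and the work goes into a pointwise bound for $g$ on $\partial E_\varepsilon$ together with a perimeter estimate for $\partial E_\varepsilon$. The only substantive difference is how the pointwise bound is obtained. The paper does not build a radial barrier; it simply appeals to Lemma~\ref{lemma3.1} (the exterior Neumann problem) combined with the maximum principle to assert $|g(x)|\le C\varepsilon|\log\varepsilon|\max_{\partial E_\varepsilon}|N|$ on $\Omega_{E_\varepsilon}$, and then bounds $|\partial E_\varepsilon|\le 2\pi\varepsilon M_{\max}$ directly. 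Your radial comparison $w(x)=A\log(D/|x|)$ with $A\sim\varepsilon m$ is a more self-contained substitute for that appeal and yields the same $\varepsilon|\log\varepsilon|$ bound on $g$.

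Regarding your worry about the sharp constant: you do not need to remove the $|\log\varepsilon|$. The paper's own proof \emph{keeps} it---the last displayed line of the argument is
\[
\int_{\Omega_{E_\varepsilon}}|\nabla g|^2\,dx\le 2\pi\,\varepsilon^2|\log\varepsilon|\,M_{\max}\Bigl(\max_{\partial E_\varepsilon}|N|\Bigr)^2,
\]
so there is a mismatch between the displayed statement of Lemma~\ref{lemma3.2} and what is actually proved. Since the only place Lemma~\ref{lemma3.2} is used is inside the proof of Lemma~\ref{lemma3.3}, where the right-hand side is sent to zero and any fixed power of $|\log\varepsilon|$ is harmless, this does not affect the downstream results. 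Your extended discussion (curvilinear coordinates, the decomposition $g=v_\varepsilon+h$, the vanishing-flux argument to kill the $A_0(\rho+\log\varepsilon)$ term) is therefore more than the paper attempts, and you may stop at the bound with the logarithmic factor.
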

\begin{proof} 
 Since $g\in H^2(\Omega_{E_\varepsilon})$, applying Green’s formula on $\Omega_{E_\varepsilon}$, we get
\[ \int_{\Omega_{E_\varepsilon}} (g \Delta g+ \lvert \nabla g\rvert^2)dx = \int_{\partial \Omega_{E_\varepsilon}} g \frac{\partial g}{\partial \nu}d\sigma_x.\]
By (\ref{eq3.6}), it follows that 
\begin{eqnarray} \label{eq3.7}
\int_{\Omega_{E_\varepsilon}} \lvert\nabla g\rvert^2dx = \int_{\partial E_\varepsilon} g \frac{\partial g}{\partial \nu} d\sigma_x.
\end{eqnarray}
Using Schwarz's inequality, we have
\begin{eqnarray} \label{eq3.8}
\int_{\partial E_\varepsilon} g(x) \frac{\partial g}{\partial \nu}(x) d\sigma_x 
    &\le& \left( \int_{\partial E_\varepsilon} g(x)^2 d\sigma_x \right)^{1/2} \left( \int_{\partial E_\varepsilon} \left\lvert\frac{\partial g}{\partial \nu}(x) \right\rvert^2 d\sigma_x\right)^{1/2} \nonumber\\
    &\le&  (2 \pi\,  \eps M_{\max})\eps|\log \eps|  \max_{\partial E_\varepsilon}\lvert N(x)\rvert \,\max_{\partial E_\varepsilon} \lvert N(x)\rvert \nonumber\\
     &\le&  2 \pi\,M_\text{max} \varepsilon^2 |\log \eps| \left(\max_{\partial E_\varepsilon} \lvert N(x)\rvert  \right)^2.
\end{eqnarray}
    The last inequality follows from the maximum principle, $g$ is harmonic on $\mathbb{R}^2\setminus{\bar E_\varepsilon}$, which satisfies the assumption of Lemma \ref{lemma3.1}. Then, we have $\lvert g(x) \rvert \leq C \eps |\log \eps| \max_{\partial E_\varepsilon} \lvert N(x) \rvert$ for all $x \in \Omega_{E_\varepsilon}$.

From \eqref{eq3.7} and \eqref{eq3.8}, we deduce
 \[ \int_{\Omega_{E_\varepsilon}} \lvert \nabla g(x)\rvert^2 dx \le   2 \pi\, \eps^2|\log \eps| M_\text{max} \left(\max_{\partial E_\varepsilon} \lvert N(x)\rvert  \right)^2.\]
\end{proof}
\begin{lemma}\label{lemma3.3}
	Fix $M \in C^\infty ( \bar{\Omega}_{E_\varepsilon})$. Let $u$ be the solution of
	\begin{equation}\label{eq3.9}
            \begin{cases}
                 \Delta u_\varepsilon(x) =0 & x \in \Omega_{ E_\varepsilon},\\
	u_\varepsilon(x) = 0 & x \in \partial \Omega, \\
	\frac{\partial}{\partial \nu }u_\varepsilon(x) = M(\theta) &  x \in \partial E_\varepsilon.
            \end{cases}
	\end{equation} 
Then, $u_\varepsilon(x)$ satisfies 
	\[ \lvert u_\varepsilon(x)\rvert \le  C\varepsilon |\log \eps| \max_{\theta \in [0,2\pi]} \lvert M(\theta)\rvert \]
	and, consequently,
	\[ \|u_\varepsilon\|_{L^2(\Omega_{E_\varepsilon})} \le C \varepsilon |\log \eps| \max_{\theta \in [0,2\pi]} \lvert M(\theta)\rvert.\]
\end{lemma}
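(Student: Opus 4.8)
The plan is to reduce the mixed boundary–value problem \eqref{eq3.9} to the purely exterior problem already solved in Lemma~\ref{lemma3.1}, to remove the resulting mismatch on $\partial\Omega$ by a harmonic corrector controlled through the maximum principle, and finally to obtain the $L^2$ estimate from the pointwise one using only that $|\Omega_{E_\eps}|\le|\Omega|$.

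First I would apply Lemma~\ref{lemma3.1} with $L=M$ (up to the orientation of the unit normal, which changes only a sign and not $\max_\theta|M(\theta)|$) to produce $v_\eps$, harmonic on $\Rr^2\setminus\bar E_\eps$, with $\partial v_\eps/\partial\nu=M$ on $\partial E_\eps$ and $|v_\eps(x)|\le C\eps(\max_\theta|M(\theta)|)(1+\rho+|\log\eps|)$ on that set. The point needing care is that the curvilinear coordinate $\rho$ stays of size $|\log\eps|$ throughout $\overline{\Omega_{E_\eps}}$: from $|x|^2=\eps^2\alpha(\rho)^2(\beta(\theta)^2\cos^2\theta+\omega(\theta)^2\sin^2\theta)$, with the last factor bounded below by a positive constant (this is exactly the non-degeneracy of $\tau$), a point $x\in\Omega\subset B(0,R)$ forces $\alpha(\rho)\le CR/\eps$; and since in the construction the radial function $\alpha$ may be chosen to grow at least exponentially (as in the displayed example $\alpha(\rho)=e^{\rho}-1$), this yields $\rho\le C|\log\eps|$ for all small $\eps$, whence
\[ |v_\eps(x)|\le C\eps|\log\eps|\max_{\theta\in[0,2\pi]}|M(\theta)|\qquad\text{for all }x\in\overline{\Omega_{E_\eps}}, \]
in particular on $\partial\Omega$.

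Next I would set $h_\eps:=u_\eps-v_\eps$ on $\Omega_{E_\eps}$; it is harmonic there, has vanishing normal derivative on $\partial E_\eps$ (the Neumann data of $u_\eps$ and $v_\eps$ cancel), and equals $-v_\eps$ on $\partial\Omega$. By the maximum principle for the mixed Dirichlet–Neumann problem — the extremum of $h_\eps$ can be attained neither inside $\Omega_{E_\eps}$ (strong maximum principle) nor on the Neumann part $\partial E_\eps$ (Hopf's lemma, $\partial E_\eps$ being smooth), hence it lies on $\partial\Omega$ — one gets $\|h_\eps\|_{L^\infty(\overline{\Omega_{E_\eps}})}\le\max_{\partial\Omega}|v_\eps|\le C\eps|\log\eps|\max_\theta|M(\theta)|$. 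Adding the two bounds gives $|u_\eps|\le|v_\eps|+|h_\eps|\le C\eps|\log\eps|\max_\theta|M(\theta)|$ on $\overline{\Omega_{E_\eps}}$, the first assertion; and then
\[ \|u_\eps\|_{L^2(\Omega_{E_\eps})}^2=\int_{\Omega_{E_\eps}}|u_\eps|^2\,dx\le|\Omega|\,\|u_\eps\|_{L^\infty(\Omega_{E_\eps})}^2\le C\eps^2|\log\eps|^2\Big(\max_\theta|M(\theta)|\Big)^2 \]
gives the second.

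The main obstacle is not analytic depth but making two routine points precise: the uniform bound $\rho=O(|\log\eps|)$ on $\overline{\Omega_{E_\eps}}$, where one must invoke the admissible growth of $\alpha$ and the fact that only the ``logarithmic'' range of $\rho$ is reached inside the fixed bounded domain $\Omega$; and the maximum principle for the mixed problem obeyed by $h_\eps$, which needs $\partial E_\eps$ smooth enough for Hopf's lemma (alternatively it follows from a standard energy/comparison argument). One may also note that Lemma~\ref{lemma3.2} together with the Poincaré inequality on $\Omega_{E_\eps}$ — whose constant is bounded for small $\eps$ since $\mu_1(\eps)\to\mu_1>0$ — already delivers the $L^2$ estimate, even with $|\log\eps|^{1/2}$, but not the pointwise bound, so passing through Lemma~\ref{lemma3.1} is what is required for the first assertion.
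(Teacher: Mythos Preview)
Your argument is correct and genuinely different from the paper's. The paper proves the lemma via an infinite iterative scheme: starting from $v_\eps^{(0)}$ obtained from Lemma~\ref{lemma3.1}, it removes the Dirichlet mismatch on $\partial\Omega$ by the harmonic extension $v_\eps^{(1)}$ in $\Omega$, then removes the resulting Neumann mismatch on $\partial E_\eps$ by another use of Lemma~\ref{lemma3.1} to produce $v_\eps^{(2)}$, and so on; each round gains a factor $\hat C\eps|\log\eps|$, the alternating series $w_\eps=\sum_n(-1)^nv_\eps^{(n)}$ converges, and is then identified with $u_\eps$ through Lemma~\ref{lemma3.2} and Fatou's lemma. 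You collapse this iteration into a single step: one application of Lemma~\ref{lemma3.1} followed by the maximum principle for the mixed problem satisfied by $h_\eps=u_\eps-v_\eps$ (via Hopf on $\partial E_\eps$, or, as you note, the weak comparison argument testing against $(h_\eps\mp m)^{\pm}$, which needs only $H^1$ regularity and sidesteps any smoothness issue on $\partial E_\eps$). Your route is shorter and avoids Lemma~\ref{lemma3.2} entirely; the paper's scheme, by contrast, yields an explicit series representation \eqref{eq3.15} of $u_\eps$, though this is not used elsewhere. The bound $\rho=O(|\log\eps|)$ on $\bar\Omega$ that you isolate is needed just as much by the paper (to pass from \eqref{eq3.3} to the inductive estimates \eqref{eq3.10}--\eqref{eq3.12}), but is left implicit there; your making it explicit, and tracing it to the freedom to choose $\alpha$ with at least exponential growth, is a useful clarification.
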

\begin{proof}
Let $u_\varepsilon(x)$ be a solution to the problem \eqref{eq3.9}. Initially, we choose $L(\theta) = M(\theta)$ and obtain $v_\varepsilon^{(0)}(x)$ as a solution that satisfies equations \eqref{eq3.1}, \eqref{eq3.2}, and \eqref{eq3.3}. However, $v_\varepsilon^{(0)}(x)$ may not satisfy $v_\varepsilon^{(0)}(x) = 0$ for $x \in \partial \Omega$. To address this, let $v_\varepsilon^{(1)}(x)$ be the harmonic function in $\Omega$ that satisfies $v_\varepsilon^{(1)}(x) = v_\varepsilon^{(0)}(x)$ for $x \in \partial \Omega$. We define
$$ M_\varepsilon= \max_{\theta \in [0,2\pi]} \lvert M(\theta)\rvert. $$
 From \eqref{eq3.3} we observe that $$\max \{\lvert v_\varepsilon^{(1)}(x)\rvert, x \in \bar{\Omega}\} \le \hat{C} \varepsilon |\log \eps|M_\varepsilon$$ and $\max\{\lvert \frac{\partial }{\partial \nu}v_\varepsilon^{(1)}(x) \rvert, x \in \partial E_\varepsilon\}\le \hat{C} \varepsilon |\log \eps| M_\varepsilon$, where $\hat{C}$ is a constant independent of $\varepsilon$. 
 Next, we set $L(\theta)= \frac{\partial }{\partial \nu}v_\varepsilon^{(1)}(x)$ for $ x \in \partial E_\varepsilon $ and consider $v_\varepsilon^{(2)}(x)$ as a solution satisfying equations \eqref{eq3.1}, \eqref{eq3.2}  and \eqref{eq3.3}. Let $v_\varepsilon^{(3)}(x)$ be the harmonic function in $\Omega$ satisfying $v_\varepsilon^{(3)}(x)= v_\varepsilon^{(2)}(x)$ for $x \in \partial \Omega$. Then, $\max \{\lvert v_\varepsilon^{(3)}(x)\rvert, x \in \bar{\Omega}\} \le  (\hat{C}\varepsilon |\log \eps|)^2 M_\varepsilon$ and $\max \{\lvert\frac{\partial}{\partial \nu}v_\varepsilon^{(3)}(x)\rvert, x \in \partial E_\varepsilon\}\le  (\hat{C}\varepsilon |\log \eps|)^{2}M_\varepsilon$.
 
\noindent By repeating this procedure, we obtain the following equations for each $n=0,1,2,\ldots$

\noindent For $v_\varepsilon^{(2n+1)}(x)$:
\begin{eqnarray*}
\Delta v_\varepsilon^{(2n+1)}(x) &=& 0 \quad \text{if } x \in \Omega, \\
v_\varepsilon^{(2n+1)}(x) &=& v_\varepsilon^{(2n)}(x) \quad \text{if } x \in \partial \Omega.
\end{eqnarray*}

\noindent For $v_\varepsilon^{(2n+2)}(x)$:
\begin{eqnarray*}
\Delta v_\varepsilon^{(2n+2)}(x) &=& 0 \quad \text{if } x \in \mathbb{R}^2 \backslash \bar{E}_\varepsilon, \\
\frac{\partial}{\partial \nu}v_\varepsilon^{(2n+2)}(x) &=& \frac{\partial}{\partial \nu}v_\varepsilon^{(2n+1)}(x) \quad \text{if } x \in \partial E_\varepsilon.
\end{eqnarray*}

\noindent By induction, we have the following inequalities:
\begin{eqnarray}
\max_{\bar{\Omega}} \lvert v_\varepsilon^{(2n+1)}(x)\rvert &\le& (\hat{C} \varepsilon |\log \eps|)^{n+1}M_\varepsilon, \label{eq3.10} \\
\max_{\partial E_\varepsilon} \left\lvert \frac{\partial}{\partial \nu}v_\varepsilon^{(2n+1)}(x) \right\rvert &\le& (\hat{C} \varepsilon |\log \eps|)^{n+1}M_\varepsilon, \label{eq3.11} \\
\lvert v_\varepsilon^{(2n)}(x) \rvert &\le& (\hat{C}\varepsilon |\log \eps|)^{n+1} M_\varepsilon \label{eq3.12}
\end{eqnarray}
     hold for $n \ge 0$. 
     We can choose $\varepsilon$ such that $\hat{C}\varepsilon|\log \eps| \le 1/2$. Let us define
\begin{eqnarray}
w_\varepsilon(x) = \sum_{n=0}^\infty (-1)^n v_\varepsilon^{(n)}(x). \label{eq3.13}
\end{eqnarray}
From \eqref{eq3.10} and \eqref{eq3.12}, we observe that the right-hand side of \eqref{eq3.13} uniformly converges on $\bar{\Omega}\backslash E_\eta$ for any $\eta >\varepsilon$. Since $v_\varepsilon^{(n)}$ is harmonic in $\Omega_{E_\varepsilon}$, it follows that $w_\varepsilon(x)$ is harmonic in $\Omega_{E_\varepsilon}$, $w_\varepsilon(x) = 0$ for $x \in \partial \Omega$, and
\begin{eqnarray*}
\frac{\partial w_\varepsilon}{\partial x_j}(x) = \sum_{n=0}^\infty (-1)^n \frac{\partial v_\varepsilon^{(n)}}{\partial x_j}(x) \quad \text{for } x \in \Omega_{E_\varepsilon}, \quad j = 1, 2.
\end{eqnarray*}
Next, we define
\begin{eqnarray*}
g_\varepsilon^{(n)}(x) = u_\varepsilon(x) - \sum_{i=0}^{2n+1}(-1)^i v_\varepsilon^{(i)}(x).
\end{eqnarray*}
Then, 
\begin{eqnarray} \label{eq3.14}
         \nabla g_\varepsilon^{(n)}(x) \to \nabla (u_\varepsilon - w_\varepsilon)(x) \qquad (n\to \infty) \text{  a.e. } x \in \Omega_{E_\varepsilon}.
     \end{eqnarray}
We have the properties that $g_\varepsilon^{(n)}(x)$ is harmonic in $\Omega_{E_\varepsilon}$, $g_\varepsilon^{(n)}(x) = 0$ for $x \in \partial \Omega$, and
\begin{eqnarray*}
\frac{\partial g_\varepsilon^{(n)}}{\partial \nu}(x) = \frac{\partial v_\varepsilon^{(2n+1)}}{\partial \nu}(x) \quad \text{for } x \in \partial E_\varepsilon.
\end{eqnarray*}
Therefore, applying Lemma \ref{lemma3.2} and \eqref{eq3.11}, we obtain
\begin{eqnarray*}
\int_{\Omega_{E_\varepsilon}} \lvert \nabla g_\varepsilon^{(n)}\rvert^2 dx &\le& 2\pi M_\text{max} \varepsilon^2 |\log \eps|  \left( \max_{\partial E_\varepsilon} \left\lvert \frac{\partial v_\varepsilon^{(2n+1)}}{\partial \nu}(x) \right\rvert \right)^2 \\
&\le& 2\pi M_\text{max} \varepsilon^2|\log \eps| (\hat{C}\varepsilon|\log \eps|)^{2n+2}M_\varepsilon^2.
\end{eqnarray*}
By using Fatou's Lemma and \eqref{eq3.14}, we deduce that
     \[ \int_{\Omega_{E_\varepsilon}} \lvert\nabla(u_\varepsilon - w_\varepsilon)\rvert^2 dx \le \liminf_{n \to \infty} \int_{\Omega_{E_\varepsilon}} \lvert\nabla g^{(n)}\rvert^2 dx \le 0.\]
     Thus, $u_\varepsilon - w_\varepsilon =$ constant a.e $\Omega_{E_\varepsilon}$. Since $u_\varepsilon(x) = w_\varepsilon(x)=0$ for $x \in \partial \Omega$, it follows that $u_\varepsilon = w_\varepsilon$ a.e. $\Omega_{E_\varepsilon}$. Therefore,
     \begin{eqnarray} \label{eq3.15}
         u_\varepsilon(x) = \sum_{n=0}^{\infty} (-1)^n v_\varepsilon^{(n)} \qquad x\in \Omega_\varepsilon.
     \end{eqnarray}
    From \eqref{eq3.10} and \eqref{eq3.12}, we have
    \begin{eqnarray*}
    \lvert u_\varepsilon(x)\rvert &\le&\sum_{n=0}^{\infty}(\lvert v_\varepsilon^{(2n)}(x)\rvert + \lvert v_\varepsilon^{(2n+1)}(x)\rvert ) \\
                       &\le & C \varepsilon|\log \eps| M_\varepsilon.  
    \end{eqnarray*} 
Consequently, we can conclude that the function $u_\varepsilon$ satisfies the following inequality:
    	\[ \|u_\varepsilon\|_{L^2(\Omega_{E_\varepsilon})} \le C \varepsilon |\log \eps|\max_{\theta \in [0,2\pi]} \lvert M(\theta)\rvert,\]
     where $C$ is constant independent of $\varepsilon$.
\end{proof}
 
%%%%%%%%%%%%%%%%%%%%%%%%%%%%%%%%%%%%%%%%%%%%%%%%%%%%%%%%%%%%%%%%
%  SOME TECHNICAL RESULTS %%%%%%%%%%%%%%%%%%%%%%%%%%%%%%%%%%%%%%%%%%%%%%
%%%%%%%%%%%%%%%%%%%%%%%%%%%%%%%%%%%%%%%%%%%%%%%%%%%%%%%%%%%%%%%%
\section{Some technical results}
\subsection{Green operators in perforated domains and their approximation}
Consider the Green function $K(x,y)$ of the Laplacian in $\Omega$ under the Dirichlet condition on $\partial \Omega$. Similarly, let $K_\varepsilon(x,y)$ be the Green function of the Laplacian in $\Omega_{E_\varepsilon}$ satisfying
\begin{align*}
    \begin{cases}
	- \Delta_x K_\varepsilon (x,y)  = \delta(x-y), & x,y \in \Omega_{E_\varepsilon} \\
	K_\varepsilon(x,y)\vert_{x\in \partial \Omega}  =0, & y \in \Omega_{E_\varepsilon}\\	
	\frac{\partial}{\partial \nu}  K_\varepsilon (x,y)\vert_{x\in \partial E_\varepsilon} = 0,  & y \in  \Omega_{E_\varepsilon}. 
\end{cases}
\end{align*}

We define the bounded linear operators $\mathbf{K}$ and $\mathbf{K}_\varepsilon$ on $L^2(\Omega)$ and $L^2(\Omega_{E_\varepsilon})$, respectively, as follows:
\begin{eqnarray*}
	(\mathbf{K}f)(x) &=& \int_\Omega K(x,y)f(y)dy,\\
	(\mathbf{K_\varepsilon}f)(x)&=& \int_{\Omega_{E_\varepsilon}}K_\varepsilon(x,y)f(y)dy,
\end{eqnarray*}
By utilizing these operators, Problems \eqref{eq2.1} and \eqref{eq2.3} can be reformulated as
\begin{eqnarray*}
	(\mathbf{K}_\varepsilon u)(x) &=& \lambda(\varepsilon)^{-1}u(x),\\
	(\mathbf{K}v)(x) &=& \lambda^{-1}v(x).
\end{eqnarray*}
In order to establish a connection between $\mathbf{K_\varepsilon}$ and $\mathbf{K}$, we introduce the operators $\mathbf{H_\varepsilon}$ and $\mathbf{\overline{H}_\varepsilon}$.  First, we define the inner product as follows:
\[\left\langle   \nabla_w a(x,w), \nabla_w b(w,y)  \right\rangle = \sum_{i=1}^{2} \frac{\partial}{\partial w_i} a(x,w)  \frac{\partial}{\partial w_i } b(w,y)\]
for any $a,b \in C^1(\Omega \times \Omega \backslash (\Omega \times \Omega)_d)$, where $(\Omega \times \Omega)_d$ represents the diagonal set of $\Omega \times \Omega$. The inner product $\left\langle \nabla_w, \nabla_w \right\rangle$ remains invariant under any orthogonal transformation of orthonormal coordinates $(w_1, w_2)$.

Similarly, by considering $a(x,w)$ and $b(x,w)$ belonging to $C^3(\Omega\backslash \{w\})$ and satisfying $\Delta_w a(x,w) = \Delta_w b(x,w) = 0$ for $x,y \in \Omega\backslash\{w\}$, we define
\[\left\langle   H_w a(x,w), H_w b(w,y)  \right\rangle = \sum_{i,j=1}^{2} \frac{\partial^2}{\partial w_i \partial w_j} a(x,w)  \frac{\partial^2}{\partial w_i \partial w_j } b(w,y).\]
It is noteworthy that $ \left\langle  H_w a(x,w), H_w b(w,y)\right\rangle\vert_{x,y \in \Omega \backslash \{w\}}$ remains invariant under any orthonormal transformation of the basis.

With $M$ defined by \eqref{HaiThamGraz}, we introduce the functions $h_\varepsilon(x,y)$ and $\bar{h}_\varepsilon(x,y)$ as follows:
\begin{eqnarray*}
 h_\varepsilon(x,y) &=& K(x,y) + h(\varepsilon)\left\langle \nabla_w  K(x,\tilde{w}), \nabla_w K(\tilde{w},y)\right\rangle\\
 &&+ i(\varepsilon)\left\langle H_w  K(x,\tilde{w}), H_w K(\tilde{w},y)\right\rangle,
 \end{eqnarray*}
and
\begin{eqnarray*}
\bar{h}_\varepsilon(x,y) &=& K(x,y)+ g(\varepsilon)K(x,w) K(w,y)\\
&&+ h(\varepsilon)\left\langle \nabla_w  K(x,\tilde{w}), \nabla_w K(\tilde{w},y)\right\rangle \xi_\varepsilon(x)\xi_\varepsilon(y) \\
&&+  i(\varepsilon)\left\langle H_w  K(x,\tilde{w}), H_w K(\tilde{w},y)\right\rangle \xi_\varepsilon(x)\xi_\varepsilon(y),
\end{eqnarray*}
 where $g(\varepsilon)= -\pi \mu_i (M\varepsilon)^2$, $h(\eps)=2\pi(M\eps)^2$, $i(\eps)=\frac{\pi}{2}(M\eps)^4$, and $\xi_\varepsilon(x) \in C^\infty(\mathbb{R}^2)$ satisfies $0 \le \xi_\varepsilon(x) \le 1$, $\xi_\varepsilon(x) =1$ for $x \in \mathbb{R}^2 \setminus E_\varepsilon$ and $\xi_\varepsilon(x) = 0$ for $x \in E_{\varepsilon/2}$.

The operators $\mathbf{H}_\varepsilon$ and $\mathbf{\overline{H}}_\varepsilon$ are defined as follows
\begin{align*}
    \begin{array}{llll}
    (\mathbf{H}_\varepsilon g) (x) &=& \displaystyle \int_ {\Omega_{E_\varepsilon}} h_\varepsilon (x,y) g(y) dy, & x\in \Omega_{E_\varepsilon}, \vspace*{0.15cm} \\
    (\mathbf{\overline{H}}_\varepsilon f) (x) &=& \displaystyle  \int _{\Omega} \bar{h}_\varepsilon (x,y) f(y) dy, & x\in \Omega.
    \end{array}
\end{align*}
The operator $\mathbf{H}_\varepsilon$ serves as a very good approximation to $\mathbf{K}_\varepsilon$. Comparing $\mathbf{H}_\varepsilon$ with $\mathbf{\overline{H}}_\varepsilon$ is straightforward based on their respective definitions. Due to the fact that $\mathbf{\overline{H}}_\varepsilon$ acts on $L^2(\Omega)$, we can directly compare it with $\mathbf{K}$. Consequently, we can compare $\mathbf{K}_\varepsilon$ and $\mathbf{K}$.

To justify why $h_\varepsilon(x,y)$ provides a good approximation of $K_\varepsilon(x,y)$, let us consider the difference function $q_\varepsilon(x,y) = h_\varepsilon(x,y)- K_\varepsilon(x,y)$. It satisfies the following conditions:
\begin{eqnarray*}
\Delta_x q_\varepsilon(x,y) &=& 0, \qquad x,y \in \Omega_{E_\varepsilon},\\
q_\varepsilon(x,y)&=& 0, \qquad x \in \partial \Omega, y \in \Omega_{E_\varepsilon}.
\end{eqnarray*}
We introduce the function
\[  S(x,y) = K(x,y)+ (2\pi)^{-1}\log \lvert \nu-y \rvert,\]
which belongs to  $C^\infty( \Omega \times \Omega)$.
Take an arbitrary $x \in \partial E_\eps$, and $\nu$ is the associated normal vector of $x$. Calculating the normal derivative directly, we have
\begin{equation}\label{eq4.5}
	\begin{aligned} 
		&\frac{\partial}{\partial \nu}q_\varepsilon  (x,y)\\
  &=\frac{\partial}{\partial \nu}\Bigg( K(x,y)-K_\eps(x,y)+h(\varepsilon)\langle  \nabla_w K(x,\tilde{w}), \nabla_w K(\tilde{w},y)\rangle +i(\varepsilon)\langle H_w K(x, \tilde{w}), H_w K(\tilde{w},y)\rangle\Bigg)\\
		&= \frac{\partial}{\partial \nu} K(x,y) \\
  &\quad-h(\eps)(2\pi)^{-1} \left\langle  \frac{\partial}{\partial \nu} \nabla_w \log|\nu-\tilde{w}|, \nabla_w K(\tilde{w},y)\right\rangle \\
  &\quad+h(\varepsilon) \frac{\partial}{\partial \nu} 
  \langle  \nabla_w S(x,\tilde{w}), \nabla_w K(\tilde{w},y)\rangle\\
	&\quad -i(\eps)(2\pi)^{-1} \left\langle \frac{\partial}{\partial \nu} H_w \log|\nu-\tilde{w}|, H_w K(\tilde{w},y)\right\rangle \\
 &\quad+i(\varepsilon) \frac{\partial}{\partial \nu} 
  \langle  H_w S(x,\tilde{w}), H_w K(\tilde{w},y)\rangle.
  \end{aligned}
  \end{equation}
  Focusing on each term separately, we start with the derivatives of the logarithmic terms and their corresponding expressions
\begin{equation}\label{exterior_derivative_nabla}
    \begin{aligned}
         &\left\langle  \frac{\partial}{\partial \nu} \nabla_w \log|\nu-\tilde{w}|, \nabla_w K(\tilde{w},y)\right\rangle\\
         &=  \left\langle  \frac{\partial}{\partial \nu} \left(\frac{-\nu_1}{|\nu|^2}, \frac{-\nu_2}{|\nu|^2}\right), \nabla_w K(\tilde{w},y)\right\rangle \\
         &=-\left\langle  \left(\nu \left(\frac{\nu_2^2-\nu_1^2}{|\nu|^4}, \frac{-2\nu_1\nu_2}{|\nu|^4}\right),\nu \left( \frac{-2\nu_1\nu_2}{|\nu|^4},\frac{\nu_2^2-\nu_1^2}{|\nu|^4}\right) \right), \nabla_w K(\tilde{w},y)\right\rangle\\
         &= |\nu|^{-2}\left\langle  \nu, \nabla_w K(\tilde{w},y)\right\rangle.
    \end{aligned}
\end{equation}
\begin{equation}\label{exterior_derivative_hessian}
    \begin{aligned}
         &\left\langle \frac{\partial}{\partial \nu} H_w \log|\nu-\tilde{w}|, H_w K(\tilde{w},y)\right\rangle\\
         &=  \left\langle \frac{\partial}{\partial \nu}  
\begin{pmatrix}
  \frac{\nu_2^2-\nu_1^2}{|\nu|^4} & \frac{-2\nu_1\nu_2}{|\nu|^4}\\ 
  \frac{-2\nu_1\nu_2}{|\nu|^4} & \frac{\nu_1^2-\nu_2^2}{|\nu|^4}
\end{pmatrix}
, H_w K(\tilde{w},y)\right\rangle \\
&=\left\langle
 \begin{pmatrix}
     \nu\left(\frac{-2\nu_1(3\nu_2^2-\nu_1^2)}{|\nu|^6},\frac{2\nu_2(3\nu_1^2-\nu_2^2)}{|\nu|^6} \right) & \nu\left(\frac{-2\nu_2(|\nu|^2-4\nu_1^2)}{ |\nu|^{6}},\frac{-2\nu_1(|\nu|^2-4\nu_2^2)}{ |\nu|^{6}}\right)\\
    \nu\left(\frac{-2\nu_2(|\nu|^2-4\nu_1^2)}{ |\nu|^{6}},\frac{-2\nu_1(|\nu|^2-4\nu_2^2)}{ |\nu|^{6}}\right) & \nu\left(\frac{2\nu_1(3\nu_2^2-\nu_1^2)}{|\nu|^6}, \frac{-2\nu_2(3\nu_1^2-\nu_2^2)}{|\nu|^6} \right)
 \end{pmatrix}
 , H_w K(\tilde{w},y)\right\rangle \\
 &=2|\nu|^{-4} \big( (\nu_1^2-\nu_2^2) \frac{\partial^2}{\partial w_1^2}K(\tilde{w},y)+(\nu_2^2-\nu_1^2)\frac{\partial^2}{\partial w_2^2} K(\tilde{w},y)+4\nu_1\nu_2\frac{\partial^2}{\partial w_1 \partial w_2}K(\tilde{w},y) \big)\\
 &=2|\nu|^{-4} \bigg( 2\nu_1\left(\nu_1\frac{\partial^2} {\partial w_1^2}K(\tilde{w},y)+ \nu_2 \frac{\partial^2}{\partial w_2\partial w_1} K(\tilde{w},y)\right)\\
&\quad+ 2\nu_2\left(\nu_2\frac{\partial^2} {\partial w_2^2}K(\tilde{w},y)+ \nu_1 \frac{\partial^2}{\partial w_1\partial w_2} K(\tilde{w},y)\right) -|\nu|^2\Delta_w K(\tilde{w},y)\bigg)\\
&=4|\nu|^{-4}\nu \left(\nu_1\frac{\partial^2} {\partial w_1^2}K(\tilde{w},y)+ \nu_2 \frac{\partial^2}{\partial w_2\partial w_1} K(\tilde{w},y),\nu_2\frac{\partial^2} {\partial w_2^2}K(\tilde{w},y)+ \nu_1 \frac{\partial^2}{\partial w_1\partial w_2} K(\tilde{w},y)\right).
\end{aligned}
\end{equation}
By substituting \eqref{exterior_derivative_nabla} and \eqref{exterior_derivative_hessian} into \eqref{eq4.5} and noting that $\nu=x$, we obtain
\begin{equation}\label{eq:partial_nu_q_epsilon}
    \begin{aligned}
         \frac{\partial}{\partial \nu}q_\varepsilon  (x,y)&=\nu \bigg( \nabla_x K(x,y) -h(\eps)(2\pi)^{-1}  |x|^{-2} \nabla_w K(\tilde{w},y) \\
&\quad-i(\eps)\frac{2}{\pi} |x|^{-4} \left(x_1\frac{\partial^2} {\partial w_1^2}K(\tilde{w},y)+ x_2 \frac{\partial^2}{\partial w_2\partial w_1} K(\tilde{w},y), \right.\\
&\quad\quad\quad\quad\quad\quad\quad  \left.x_2\frac{\partial^2} {\partial w_2^2}K(\tilde{w},y)+ x_1 \frac{\partial^2}{\partial w_1\partial w_2} K(\tilde{w},y)\right)\\
&\quad+ h(\eps)  \nabla_x\left\langle  \nabla_w S(x,\tilde{w}), \nabla_w K(\tilde{w},y)\right\rangle \\
&\quad+ i(\eps)  \nabla_x\left\langle  H_w S(x,\tilde{w}), H_w K(\tilde{w},y)\right\rangle \bigg).
    \end{aligned}
\end{equation}
Without loss of generality, we can assume that $x=x^*$. Taking into account that $h(\eps)=2\pi |x^*|^{2}$ and $i(\eps)=\frac{\pi}{2} |x^*|^{4}$, we find that \eqref{eq:partial_nu_q_epsilon} is equivalent to
\begin{equation} \label{eq:derivative_at_x_star}
    \begin{aligned} 
        \left. \frac{\partial}{\partial \nu} q_\varepsilon (x, y) \right|_{x = x^*} &= \nu \bigg( \nabla_x K(x^*, y) - \nabla_w K(\tilde{w}, y) \\
        &\quad - \left( x^*_1 \frac{\partial^2}{\partial w_1^2} K(\tilde{w}, y) + x^*_2 \frac{\partial^2}{\partial w_2 \partial w_1} K(\tilde{w}, y), \right. \\
        &\quad\quad \left. x^*_2 \frac{\partial^2}{\partial w_2^2} K(\tilde{w}, y) + x^*_1 \frac{\partial^2}{\partial w_1 \partial w_2} K(\tilde{w}, y) \right) \\
        &\quad + \mathcal{O}(\varepsilon^2) \nabla_x \left\langle \nabla_w S(x^*, \tilde{w}), \nabla_w K(\tilde{w}, y) \right\rangle \\
        &\quad + \mathcal{O}(\varepsilon^4) \nabla_x \left\langle H_w S(x^*, \tilde{w}), H_w K(\tilde{w}, y) \right\rangle \bigg).
    \end{aligned}
\end{equation}

We present two important theorems supporting the proof of Theorem \ref{theo2.1}. The proofs are presented in the following subsection.
\begin{theorem}\label{theo4.1}
     There exists a constant $C$ independent of $\varepsilon$ such that
    \[ \|(\mathbf{K}_\varepsilon - \mathbf{H}_\varepsilon) (\chi_\varepsilon\varphi_i)\|_{L^2(\Omega_{E_\varepsilon})} \le C \varepsilon^3 |\log \eps|^2.\]
    Here $\chi_\varepsilon$ is the characteristic function of $\bar{\Omega}_{E_\varepsilon}$.
\end{theorem}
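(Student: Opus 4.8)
\textbf{Proof proposal for Theorem \ref{theo4.1}.}

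The plan is to estimate the difference $(\mathbf{K}_\varepsilon - \mathbf{H}_\varepsilon)(\chi_\varepsilon\varphi_i)$ by recognizing that it is the Green-operator acting on nothing — rather, the function $w_\varepsilon(x) := ((\mathbf{K}_\varepsilon - \mathbf{H}_\varepsilon)(\chi_\varepsilon\varphi_i))(x)$ should be shown to solve an explicit boundary value problem on $\Omega_{E_\varepsilon}$ whose data are small, and then to apply Lemma \ref{lemma3.3} to convert the smallness of the Neumann data on $\partial E_\varepsilon$ into the desired $L^2$ bound. Concretely, since $\mathbf{K}_\varepsilon(\chi_\varepsilon\varphi_i)$ solves $-\Delta u = \chi_\varepsilon\varphi_i$ in $\Omega_{E_\varepsilon}$ with $u=0$ on $\partial\Omega$ and $\partial u/\partial\nu = 0$ on $\partial E_\varepsilon$, while $\mathbf{H}_\varepsilon(\chi_\varepsilon\varphi_i) = \int_{\Omega_{E_\varepsilon}} h_\varepsilon(x,y)\varphi_i(y)\,dy$ is built from $K(x,y)$ plus the correction terms weighted by $h(\varepsilon),i(\varepsilon)$, I would first check that $w_\varepsilon$ is harmonic in $\Omega_{E_\varepsilon}$ and vanishes on $\partial\Omega$. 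The point of the carefully engineered correctors in $h_\varepsilon$ — the gradient and Hessian pairings against $K(x,\tilde w)$ — is precisely that $q_\varepsilon(x,y) = h_\varepsilon(x,y) - K_\varepsilon(x,y)$ has a normal derivative on $\partial E_\varepsilon$ given by \eqref{eq:derivative_at_x_star}, which, after Taylor-expanding $\nabla_x K(x^*,y)$ around $\tilde w$ and matching against the Hessian term, is $\mathcal{O}(\varepsilon^2)$ pointwise in $x\in\partial E_\varepsilon$ (uniformly for $y$ away from the hole), with the remaining $S$-terms contributing the extra $\mathcal{O}(\varepsilon^2)$ and $\mathcal{O}(\varepsilon^4)$ factors shown there.

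The key steps, in order, are: (1) write $w_\varepsilon(x) = \int_{\Omega_{E_\varepsilon}} q_\varepsilon(x,y)\varphi_i(y)\,dy$ and verify it is harmonic in $x$ on $\Omega_{E_\varepsilon}$ and zero on $\partial\Omega$, so it fits the template of Lemma \ref{lemma3.3} with $M(\theta) = \partial w_\varepsilon/\partial\nu$ on $\partial E_\varepsilon$; (2) compute $\partial w_\varepsilon/\partial\nu$ by integrating \eqref{eq:partial_nu_q_epsilon} against $\varphi_i(y)$ — here one uses that $\nabla_x K(x,y)$ at $x$ near $\tilde w$ expands as $\nabla_w K(\tilde w,y) + (x-\tilde w)\cdot\nabla_w\nabla_w K(\tilde w,y) + \mathcal{O}(|x-\tilde w|^2\cdot(\text{singularity in }y))$, and the first two terms cancel exactly against the $\nabla_w K$ and Hessian correctors in \eqref{eq:derivative_at_x_star}, leaving a remainder that is $\mathcal{O}(\varepsilon^2)$ times something integrable against $\varphi_i$; (3) bound $\max_{\theta}|M(\theta)|$ — this requires care because $\nabla_x K(x,y)$ and its second derivatives are singular as $y\to x$, but $x\in\partial E_\varepsilon$ stays within distance $\mathcal{O}(\varepsilon)$ of $\tilde w$, so the singular contribution from the region $|y-\tilde w|\lesssim\varepsilon$ must be estimated separately using the $L^p$/Sobolev bounds for $\varphi_i$ (which are the subject of the preliminary lemmas), producing a factor $|\log\varepsilon|$; altogether $\max_\theta|M(\theta)| = \mathcal{O}(\varepsilon^2|\log\varepsilon|)$; (4) apply Lemma \ref{lemma3.3} to conclude $\|w_\varepsilon\|_{L^2(\Omega_{E_\varepsilon})} \le C\varepsilon|\log\varepsilon|\cdot\mathcal{O}(\varepsilon^2|\log\varepsilon|) = \mathcal{O}(\varepsilon^3|\log\varepsilon|^2)$.

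The main obstacle I anticipate is step (3): controlling the Neumann data $\partial w_\varepsilon/\partial\nu$ uniformly on $\partial E_\varepsilon$ when the kernel $q_\varepsilon(x,y)$ and its $x$-derivatives carry the $\log|x-y|$ singularity of the Green function, since $x$ ranges over $\partial E_\varepsilon$ which shrinks onto $\tilde w$ while $y$ also ranges over a neighborhood of $\tilde w$. One has to split the $y$-integral into the near region $\{|y-\tilde w|\le \kappa\}$ for suitable $\kappa$ (possibly $\kappa\sim\varepsilon$ or a fixed small constant) and the far region, Taylor-expand in the far region where $K(x,y)$ is smooth in $x$ near $\tilde w$, and in the near region use the explicit logarithmic form $K(x,y) = -(2\pi)^{-1}\log|x-y| + S(x,y)$ together with $\|\varphi_i\|_{L^\infty}<\infty$ (or an $L^p$ bound) to show the singular piece contributes at most $\mathcal{O}(\varepsilon^2|\log\varepsilon|)$ after the $h(\varepsilon)\sim\varepsilon^2$, $i(\varepsilon)\sim\varepsilon^4$ weights are taken into account. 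A secondary subtlety is making sure the cancellation in step (2) — first and second order Taylor terms of $\nabla_x K(x^*,y)$ against the two correctors — is exact; this is exactly what the identity \eqref{eq:derivative_at_x_star} is asserting, so I would rely on it and only need to bound the leftover $\nabla_x\langle\nabla_w S,\nabla_w K\rangle$ and $\nabla_x\langle H_w S, H_w K\rangle$ terms, which are harmless because $S$ is smooth on $\Omega\times\Omega$.
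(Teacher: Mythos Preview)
Your overall architecture matches the paper's exactly: set $v_\varepsilon = (\mathbf{K}_\varepsilon-\mathbf{H}_\varepsilon)(\chi_\varepsilon\varphi_i)$, verify it is harmonic in $\Omega_{E_\varepsilon}$ and vanishes on $\partial\Omega$, bound its Neumann datum on $\partial E_\varepsilon$ by $C\varepsilon^2|\log\varepsilon|$, and finish with Lemma~\ref{lemma3.3}. That part is fine.

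The gap is in your steps (2)--(3). You propose to Taylor-expand $\nabla_x K(x^*,y)$ in $x$ about $\tilde w$, cancel the zeroth and first order terms against the $h(\varepsilon)$ and $i(\varepsilon)$ correctors, and integrate the remainder against $\varphi_i$. But that remainder is of size $\varepsilon^2\,|y-\tilde w|^{-3}$ (third $x$-derivatives of the log kernel), and $\int_{\Omega_{E_\varepsilon}} |y-\tilde w|^{-3}\,dy \sim \varepsilon^{-1}$ in two dimensions. So the ``something integrable against $\varphi_i$'' is in fact \emph{not} integrable, and the argument as written only yields $\mathcal{O}(\varepsilon)$, not $\mathcal{O}(\varepsilon^2|\log\varepsilon|)$. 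Your near/far split does not repair this: with a fixed cutoff $\kappa$ the near region contributes $\mathcal{O}(\kappa)$ because the leading term $\nabla_x K(x^*,y)$ carries no $h(\varepsilon)$ or $i(\varepsilon)$ weight; with $\kappa\sim\varepsilon$ the far-region Taylor remainder again diverges like $\varepsilon^{-1}$.

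The paper's fix is a different splitting, and it is the crux of the proof: write $\chi_\varepsilon\varphi_i=\varphi_i-\hat\chi_\varepsilon\varphi_i$ with $\hat\chi_\varepsilon=1-\chi_\varepsilon$. For the full $\varphi_i$ piece one uses the eigenfunction relation $\mathbf{K}\varphi_i=\mu_i^{-1}\varphi_i$, so the expression whose second-order Taylor remainder is needed (this is $I_1(\varepsilon)$ in \eqref{eq4.17}--\eqref{eq4.18}) is a genuinely smooth function, and the remainder is honestly $\mathcal{O}(\varepsilon^2)$. The correction $\hat\chi_\varepsilon\varphi_i$ is supported on the small set $E_\varepsilon$; there one decomposes $K=L+S$ with $L(x,y)=-(2\pi)^{-1}\log|x-y|$, computes the Newtonian potentials $F(x)=\int_{E_\varepsilon}L(x,y)\,dy$ and $K_n(w)=\int_{E_\varepsilon}L(w,y)(y_n-w_n)\,dy$ explicitly (equations \eqref{eq4.23}--\eqref{eq4.27}), and extracts the $\mathcal{O}(\varepsilon^2|\log\varepsilon|)$ bound from the small support (this is the $I_4,I_5,I_6$ analysis). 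The smoothness of $\mathbf{K}\varphi_i$ is precisely what your pointwise-in-$y$ Taylor expansion cannot see, and without it the required cancellation does not close.
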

\begin{theorem} \label{theo4.2}
  There exists a constant $C$ independent of $\varepsilon$ such that
\begin{eqnarray*}
\|(\chi_\varepsilon \mathbf{\overline{H}}_\varepsilon - \mathbf{H}_\varepsilon \chi_\varepsilon)\varphi_i\|_{L^2(\Omega_{E_\varepsilon})} \le C \varepsilon^3 |\log \eps|^2.
\end{eqnarray*}
\end{theorem}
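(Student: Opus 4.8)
The plan is to estimate the difference $(\chi_\varepsilon \mathbf{\overline{H}}_\varepsilon - \mathbf{H}_\varepsilon \chi_\varepsilon)\varphi_i$ by comparing the integral kernels $\bar h_\varepsilon(x,y)$ and $h_\varepsilon(x,y)$ directly. For $x\in\Omega_{E_\varepsilon}$ we have
\[
(\chi_\varepsilon \mathbf{\overline{H}}_\varepsilon - \mathbf{H}_\varepsilon\chi_\varepsilon)\varphi_i(x) = \int_\Omega \bar h_\varepsilon(x,y)\varphi_i(y)\,dy - \int_{\Omega_{E_\varepsilon}} h_\varepsilon(x,y)\varphi_i(y)\,dy,
\]
so the integrand splits into (a) the contribution of integrating $\bar h_\varepsilon$ over the small hole $E_\varepsilon$ rather than over $\Omega_{E_\varepsilon}$, and (b) the pointwise difference $\bar h_\varepsilon(x,y) - h_\varepsilon(x,y)$ over $\Omega_{E_\varepsilon}$. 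For (b), by the definitions of $\bar h_\varepsilon$ and $h_\varepsilon$ the difference is $g(\varepsilon)K(x,\tilde w)K(\tilde w,y)$ plus the two inner-product terms multiplied by $\bigl(\xi_\varepsilon(x)\xi_\varepsilon(y)-1\bigr)$. Since $\xi_\varepsilon(x)=1$ outside $E_\varepsilon$, the factor $\xi_\varepsilon(x)\xi_\varepsilon(y)-1$ is supported in $E_\varepsilon\times\Omega \cup \Omega\times E_\varepsilon$; but we are integrating in $y$ over $\Omega$ while $x\in\Omega_{E_\varepsilon}$, so the only part that survives on $\Omega_{E_\varepsilon}$ is where $y\in E_\varepsilon$, a set of measure $\mathcal{O}(\varepsilon^2)$.

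The key steps, in order, are as follows. First, I would record the size of $g(\varepsilon)=-\pi\mu_i(M\varepsilon)^2 = \mathcal{O}(\varepsilon^2)$, $h(\varepsilon)=\mathcal{O}(\varepsilon^2)$, and $i(\varepsilon)=\mathcal{O}(\varepsilon^4)$, and note that $\|K(\cdot,\tilde w)\|_{L^2(\Omega)}$, $\|\nabla_w K(\cdot,\tilde w)\|$, and $\|H_w K(\cdot,\tilde w)\|$ are finite constants (the singularity of $K$ and its derivatives is at the fixed interior point $\tilde w$, away from which everything is smooth; one only needs local integrability near $\tilde w$, where $K$ has a logarithmic singularity, $\nabla K$ behaves like $|x-\tilde w|^{-1}$, and $H_w K$ like $|x-\tilde w|^{-2}$ — so the relevant norms are finite in $L^2$, $L^2$, and need care for the Hessian term). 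Second, I would bound the term $g(\varepsilon)\int_\Omega K(x,\tilde w)K(\tilde w,y)\varphi_i(y)\,dy$: this is $g(\varepsilon)K(x,\tilde w)\cdot\langle K(\tilde w,\cdot),\varphi_i\rangle_{L^2(\Omega)}$, and since $\mathbf{K}\varphi_i = \mu_i^{-1}\varphi_i$ we have $\langle K(\tilde w,\cdot),\varphi_i\rangle = \mu_i^{-1}\varphi_i(\tilde w)$, a constant; taking the $L^2(\Omega_{E_\varepsilon})$ norm in $x$ gives $\mathcal{O}(\varepsilon^2)$, which is comfortably below the target. Third, for the hole contribution $\int_{E_\varepsilon}\bar h_\varepsilon(x,y)\varphi_i(y)\,dy$ with $x\in\Omega_{E_\varepsilon}$, I would use that $y\mapsto K(x,y)$ and its $\tilde w$-derivatives are uniformly bounded for $y\in E_\varepsilon$ once $x$ is at distance $\gtrsim\varepsilon$ from $\tilde w$ (and handled separately, via the logarithmic $L^2$ bound, for $x$ near $\tilde w$), so that the integral over $E_\varepsilon$ is $\mathcal{O}(|E_\varepsilon|)=\mathcal{O}(\varepsilon^2)$ pointwise, hence $\mathcal{O}(\varepsilon^2)$ in $L^2(\Omega_{E_\varepsilon})$; the extra factors $g(\varepsilon),h(\varepsilon),i(\varepsilon)$ only improve this. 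Fourth, for the surviving part of (b) — the inner-product terms times $\xi_\varepsilon(x)\xi_\varepsilon(y)-1$ restricted to $y\in E_\varepsilon$ — I would combine the $\mathcal{O}(\varepsilon^2)$ (resp. $\mathcal{O}(\varepsilon^4)$) prefactors with the $\mathcal{O}(\varepsilon^2)$ measure of $E_\varepsilon$ and the boundedness/mild singularity of the kernels, obtaining again at worst $\mathcal{O}(\varepsilon^3|\log\varepsilon|)$ or better. Assembling all pieces yields the claimed $\mathcal{O}(\varepsilon^3|\log\varepsilon|^2)$ bound (in fact the estimate appears to give more room than that).

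The main obstacle I anticipate is the Hessian term $\langle H_w K(x,\tilde w), H_w K(\tilde w,y)\rangle$: the second derivatives of the Green function behave like $|x-\tilde w|^{-2}$ near the pole, which is the borderline case for $L^2$-integrability in two dimensions, so some care (e.g. a cutoff separating the region $|x-\tilde w|\lesssim\varepsilon$ from its complement, combined with the uniform estimates of Lemma \ref{lemma3.3} on the near region and direct kernel bounds on the far region) is needed to be sure the $x$-integration over $\Omega_{E_\varepsilon}$ genuinely converges and contributes the right power of $\varepsilon$; but since this term carries the very small prefactor $i(\varepsilon)=\mathcal{O}(\varepsilon^4)$, even a crude bound with a logarithmic loss is more than enough. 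A secondary technical point is that $\varphi_i$ here denotes the eigenfunction of the unperturbed problem \eqref{eq2.3} (not of \eqref{eq2.1}), so that $\mathbf{K}\varphi_i=\mu_i^{-1}\varphi_i$ holds exactly on $\Omega$ and can be used freely; keeping the domains of integration ($\Omega$ versus $\Omega_{E_\varepsilon}$) straight throughout is the only real bookkeeping hazard.
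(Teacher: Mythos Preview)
Your proposal contains a decisive error: you repeatedly assert that an $\mathcal{O}(\varepsilon^2)$ bound is ``comfortably below the target'' $C\varepsilon^3|\log\varepsilon|^2$, but the inequality goes the other way --- for small $\varepsilon$ one has $\varepsilon^3|\log\varepsilon|^2 \ll \varepsilon^2$. Thus your direct kernel bounds on the $g(\varepsilon)$-term and on the hole contribution $\int_{E_\varepsilon}K(x,y)\varphi_i(y)\,dy$, each of size $\mathcal{O}(\varepsilon^2)$ in $L^2(\Omega_{E_\varepsilon})$, are an order of magnitude too large to close the argument. Treating those two pieces separately loses the point of the construction: the coefficient $g(\varepsilon)=-\pi\mu_i(M\varepsilon)^2$ is chosen precisely so that, using $(\mathbf{K}\varphi_i)(\tilde w)=\mu_i^{-1}\varphi_i(\tilde w)$ and $|E_\varepsilon|=\pi(M\varepsilon)^2$, the leading $\varepsilon^2$-contributions of these two pieces \emph{cancel}. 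Your decomposition never sees this cancellation, and without it the target bound cannot be reached.

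The paper avoids tracking this cancellation at the level of $L^2$-kernel estimates altogether. Instead it observes that $J_\varepsilon(x,\varphi_i):=(\chi_\varepsilon\overline{\mathbf{H}}_\varepsilon-\mathbf{H}_\varepsilon\chi_\varepsilon)\varphi_i(x)$ is harmonic in $\Omega_{E_\varepsilon}$ and vanishes on $\partial\Omega$; by Lemma~\ref{lemma3.3} its $L^2$-norm is then controlled by $\varepsilon|\log\varepsilon|\cdot\max_{\partial E_\varepsilon}|\partial_\nu J_\varepsilon|$. The work reduces to showing the Neumann data on $\partial E_\varepsilon$ is $\mathcal{O}(\varepsilon^2|\log\varepsilon|)$, which the paper does by decomposing $\partial_\nu J_\varepsilon$ into pieces $J_1,\dots,J_4$ (parallel to the proof of Theorem~\ref{theo4.1}) and using the eigenfunction identity $\mathbf{K}\varphi_i=\mu_i^{-1}\varphi_i$ together with Taylor expansion of the smooth function $\varphi_i$ near $\tilde w$. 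It is at the level of the normal derivative on $\partial E_\varepsilon$ that the designed cancellation becomes transparent. Your approach could in principle be repaired by explicitly isolating and cancelling the two $\mathcal{O}(\varepsilon^2)$ pieces, but the harmonic/maximum-principle route of the paper is both cleaner and what makes Lemma~\ref{lemma3.3} pay off.
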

\subsection{Proof of Theorem 4.1}
We frequently use the following properties of the Green function:
\begin{align}
\left\lvert K(x,y) \right\rvert \le C \lvert \log \lvert x-y \rvert\rvert, \label{eq4.6}\\
\left\lvert\nabla_x K(x,y) \right\rvert \le C \lvert x-y\rvert^{-1}. \label{eq4.7}
\end{align}
These properties imply the following estimates:
\begin{align}
\left\lvert( \mathbf{K}f)(x) \right\rvert \le C \|f\|_{L^p(\Omega)} \quad (p>1), \label{eq4.8}\\
\left\lvert\nabla_x (\mathbf{K}f)(x) \right\rvert \le C \|f\|_{L^p(\Omega)} \quad (p>2). \label{eq4.9}
\end{align}
Now we state and prove the following lemma:
\begin{lemma} \label{lemma4.3}
Let $p \in (2, \infty)$. Then, there exists a constant $C >0$ independent of $\varepsilon$ such that
\begin{align*}
\| \mathbf{H_\varepsilon} - \mathbf{K_\varepsilon}\|_{L^p(\Omega_{E_\varepsilon}) } \le C \varepsilon^{2-(2/p)}\log \eps.
\end{align*}
\end{lemma}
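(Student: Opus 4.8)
The plan is to estimate the operator $\mathbf{H}_\varepsilon-\mathbf{K}_\varepsilon$ via its kernel $q_\varepsilon(x,y)=h_\varepsilon(x,y)-K_\varepsilon(x,y)$, which by the discussion preceding the lemma is harmonic in $x$ on $\Omega_{E_\varepsilon}$, vanishes on $\partial\Omega$, and has normal derivative on $\partial E_\varepsilon$ given by the explicit expression in \eqref{eq:partial_nu_q_epsilon}. The first step is to bound $\max_{x\in\partial E_\varepsilon}\lvert \frac{\partial}{\partial\nu}q_\varepsilon(x,y)\rvert$ pointwise in $y$ bounded away from $\tilde w$. On $\partial E_\varepsilon$ one has $|x|\asymp\varepsilon$, so $h(\varepsilon)(2\pi)^{-1}|x|^{-2}\asymp (M\varepsilon)^2\varepsilon^{-2}=M^2$ and similarly $i(\varepsilon)|x|^{-4}\asymp\varepsilon^0$; the leading terms $\nabla_x K(x,y)-h(\varepsilon)(2\pi)^{-1}|x|^{-2}\nabla_w K(\tilde w,y)-\cdots$ are the Taylor expansion of $\nabla_x K(x,y)$ around $x=\tilde w$ up to second order, so they cancel to leave an $\mathcal{O}(|x|^2)=\mathcal{O}(\varepsilon^2)$ remainder (with constants depending on third derivatives of $K$, hence on $\operatorname{dist}(y,\tilde w)$), while the two $S$-terms contribute $\mathcal{O}(\varepsilon^2)$ and $\mathcal{O}(\varepsilon^4)$ respectively since $S\in C^\infty(\Omega\times\Omega)$. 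Thus $\max_{x\in\partial E_\varepsilon}\lvert\frac{\partial}{\partial\nu}q_\varepsilon(x,y)\rvert\le C\varepsilon^2$ uniformly for $y$ in any fixed compact subset of $\Omega\setminus\{\tilde w\}$, and near $y=\tilde w$ one gets instead a bound like $C\varepsilon^2\operatorname{dist}(y,\tilde w)^{-3}$ from the singularity of the derivatives of $K$.

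The second step is to convert this boundary-derivative bound into a sup bound on $q_\varepsilon(\cdot,y)$ itself. Since $q_\varepsilon(\cdot,y)$ solves a problem of the type \eqref{eq3.9} with $M(\theta)=\frac{\partial}{\partial\nu}q_\varepsilon(x,y)$, Lemma \ref{lemma3.3} gives
\[
\lvert q_\varepsilon(x,y)\rvert\le C\varepsilon\lvert\log\varepsilon\rvert\,\max_{\theta}\Bigl\lvert\tfrac{\partial}{\partial\nu}q_\varepsilon(x,y)\Bigr\rvert\le C\varepsilon^3\lvert\log\varepsilon\rvert
\]
for $y$ away from $\tilde w$, with the near-diagonal/near-$\tilde w$ region handled separately using the weaker pointwise bound on the normal derivative together with the local integrability of $\operatorname{dist}(y,\tilde w)^{-3}$ against a small-radius ball of radius $\asymp\varepsilon$. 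Then for $f\in L^p(\Omega_{E_\varepsilon})$,
\[
\lvert((\mathbf{H}_\varepsilon-\mathbf{K}_\varepsilon)f)(x)\rvert=\Bigl\lvert\int_{\Omega_{E_\varepsilon}}q_\varepsilon(x,y)f(y)\,dy\Bigr\rvert\le \|f\|_{L^p(\Omega_{E_\varepsilon})}\,\|q_\varepsilon(x,\cdot)\|_{L^{p'}(\Omega_{E_\varepsilon})},
\]
and one estimates $\|q_\varepsilon(x,\cdot)\|_{L^{p'}}$ by splitting the $y$-integral into the region $\{\operatorname{dist}(y,\tilde w)\ge\delta\}$, where the integrand is $\le C\varepsilon^3|\log\varepsilon|$, and the shrinking region near $\tilde w$; optimizing, or more simply bounding crudely, yields the claimed $L^p\to L^p$ norm bound $C\varepsilon^{2-2/p}\lvert\log\varepsilon\rvert$ after taking the supremum over $x$ and using that $|\Omega_{E_\varepsilon}|$ is bounded. (Symmetry of $q_\varepsilon$ in a suitable sense, or a parallel estimate in the $x$-variable, can be invoked to control the integral operator norm rather than just the sup of the kernel sections.)

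The main obstacle is the behaviour near the origin $\tilde w$: the cancellation of the leading terms in \eqref{eq:partial_nu_q_epsilon} produces a remainder whose constant blows up like $\operatorname{dist}(y,\tilde w)^{-3}$, so one cannot simply pull a uniform $\varepsilon^2$ bound through the $y$-integral. Handling this requires either (i) a careful split of $\Omega_{E_\varepsilon}$ into an annular region $\varepsilon\lesssim\operatorname{dist}(y,\tilde w)\lesssim\delta$ where $\operatorname{dist}(y,\tilde w)^{-3}$ is large but the region is thin, and the bulk region where the clean $\varepsilon^2$ bound applies, then summing the contributions with the correct powers of $\varepsilon$; or (ii) exploiting that $\chi_\varepsilon\varphi_i$ and the other relevant inputs vanish or are controlled near $\tilde w$ — but since the lemma is stated for general $L^p$ functions, route (i) is the one to pursue, and checking that the thin-annulus contribution is genuinely of order $\varepsilon^{2-2/p}|\log\varepsilon|$ and not worse is the delicate computation. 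The power $2-2/p=2/p'$ should emerge precisely because $\|\,\varepsilon\,|y-\tilde w|^{-?}\,\|_{L^{p'}}$ over a ball of radius comparable to the domain, with the $\varepsilon$-cutoff at scale $\varepsilon$, scales that way; I would verify this by a direct change of variables $y=\tilde w+\varepsilon z$ in the inner region.
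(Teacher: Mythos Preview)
Your approach differs from the paper's in a fundamental way, and the difference is where your gap lies. The paper does \emph{not} estimate the kernel $q_\varepsilon(x,y)$ pointwise in $y$. Instead it first applies the operator to a fixed $\tilde f\in L^p(\Omega_{E_\varepsilon})$ (extended by zero to $E_\varepsilon$), sets $k_\varepsilon=(\mathbf{H}_\varepsilon-\mathbf{K}_\varepsilon)\tilde f$, and observes that $k_\varepsilon$ is harmonic with the right boundary behaviour. The key step is then the Sobolev embedding $\|\mathbf{K}\tilde f\|_{C^{1+s}(\Omega)}\le C\|\tilde f\|_{L^p}$ with $s=1-2/p$; this gives $|\nabla(\mathbf{K}\tilde f)(x)-\nabla(\mathbf{K}\tilde f)(\tilde w)|\le C\varepsilon^{s}\|\tilde f\|_{L^p}$ on $\partial E_\varepsilon$, while the Hessian term is estimated separately as $O(\varepsilon^{1-2/p}\|\tilde f\|_{L^p})$ using that $\tilde f$ vanishes on $E_\varepsilon$. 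There is no Taylor cancellation to order $\varepsilon^2$ in the paper's argument; each piece is bounded individually by $\varepsilon^{1-2/p}$, and then Lemma~\ref{lemma3.3} supplies the extra $\varepsilon|\log\varepsilon|$. Because everything is expressed through the smooth function $\mathbf{K}\tilde f$, no $y$-singularity ever appears.

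Your kernel-based route has a genuine obstruction that your splitting does not resolve. For $y$ in the annular region near $\partial E_\varepsilon$ (say $M_{\min}\varepsilon\le|y|\le 2M_{\max}\varepsilon$), the bound $|\partial_\nu q_\varepsilon(x,y)|\le C\varepsilon^2|y-\tilde w|^{-3}$ is false: since $\partial_\nu K_\varepsilon=0$ on $\partial E_\varepsilon$, you have $\partial_\nu q_\varepsilon(x,y)=\partial_\nu h_\varepsilon(x,y)$ there, and the leading piece $\partial_\nu K(x,y)\sim |x-y|^{-1}$ is unbounded as $y$ approaches $\partial E_\varepsilon$. Consequently $\max_{x\in\partial E_\varepsilon}|\partial_\nu q_\varepsilon(x,y)|=+\infty$ for $y$ arbitrarily close to the boundary, and Lemma~\ref{lemma3.3} cannot be invoked for those $y$. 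Your ``weaker pointwise bound together with local integrability of $|y-\tilde w|^{-3}$'' does not cover this, because there is no pointwise bound at all in that regime; the Taylor-remainder estimate breaks down since the segment from $\tilde w$ to $x\in\partial E_\varepsilon$ can pass arbitrarily close to $y$. A secondary issue is that the coefficients $h(\varepsilon)(2\pi)^{-1}|x|^{-2}$ and $i(\varepsilon)(2/\pi)|x|^{-4}$ equal $1$ only at $x=x^*$; for general $x\in\partial E_\varepsilon$ they differ by an $O(1)$ factor, so the subtraction is not literally the Taylor polynomial and the claimed $O(\varepsilon^2)$ remainder needs justification even away from $\tilde w$. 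The paper's integrate-first, Sobolev-second approach bypasses both difficulties cleanly, and that is the idea you are missing.
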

\begin{proof}
We consider an arbitrary function $\tilde{f} \in L^p(\Omega)$ which is zero on $E_\varepsilon$. Then, we put $k_\varepsilon=(\mathbf{H_\varepsilon}- \mathbf{K_\varepsilon})\tilde{f}$ and note that $k_\varepsilon$ satisfies $\Delta k_\varepsilon(x)= 0$ for $x \in \Omega_{E_\varepsilon}$ and $k_\varepsilon(x)=0$ for $x \in \partial \Omega$.\\
Taking $x=x^*$ and using \eqref{eq:derivative_at_x_star}, we obtain the expression
	\begin{equation} \label{eq4.10}
\begin{aligned}
        \left. \frac{\partial}{\partial \nu} k_\varepsilon(x) \right|_{x = x^*} &= \nu \bigg( \nabla(\mathbf{K}\tilde{f})(x^*) - \nabla(\mathbf{K}\tilde{f})(\tilde{w}) \\
        &\quad - \left( x^*_1 \frac{\partial^2}{\partial w_1^2} (\mathbf{K}\tilde{f})(\tilde{w}), x^*_2 \frac{\partial^2}{\partial w_2^2} (\mathbf{K}\tilde{f})(\tilde{w}) \right) \\
        &\quad - \left( x^*_2 \frac{\partial^2}{\partial w_2 \partial w_1} (\mathbf{K}\tilde{f})(\tilde{w}), x^*_1 \frac{\partial^2}{\partial w_1 \partial w_2} (\mathbf{K}\tilde{f})(\tilde{w}) \right) \\
        &\quad + \mathcal{O}(\varepsilon^2) \nabla_x \left\langle \nabla_w S(x^*, \tilde{w}), \nabla (\mathbf{K}\tilde{f})(\tilde{w}) \right\rangle \\
        &\quad + \mathcal{O}(\varepsilon^4) \nabla_x \left\langle H_w S(x^*, \tilde{w}), H_w (\mathbf{K}\tilde{f})(\tilde{w}) \right\rangle \bigg)
    \end{aligned}
	\end{equation}
	By the Sobolev embedding theorem, we have
	\begin{eqnarray} \label{eq4.11}
		\|\mathbf{K}\tilde{f}\|_{C^{1+s}(\Omega)} \le C \|\tilde{f}\|_{L^p(\Omega)}=C \|\tilde{f}\|_{L^p(\Omega_{E_\varepsilon})}
	\end{eqnarray}
    for  $s = 1-2/p$, $2< p < \infty$. Using \eqref{eq4.11}, we have
	\begin{align} \label{eq4.12}
	\left\lvert \frac{\partial}{\partial x_n}(\mathbf{K}\tilde{f})(x^*)  -  \frac{\partial}{\partial w_n} (\mathbf{K}\tilde{f})(\tilde{w}) \right\rvert\le C \varepsilon^s \|\mathbf{K}\tilde{f}\|_{C^{1+s}(\Omega)}  \le C' \varepsilon^s\|\tilde{f}\|_{L^p(\Omega_{E_\varepsilon})}
	\end{align}
	for $n=1,2$, $p>2$.
	
	We define that $R = \sup_{x\in \partial \Omega}|x|$, and  $M_{\min} = \inf_{x\in \partial E}|x|$, then  $\Omega_{E_\eps}$ is in the annulus with centre $\tilde{w}$:
 \[A(\tilde{w};\eps M_{\min},R)=\{x\in \Rr^2 \vert \eps M_{\min} <|x-\tilde{w}|<R \}.\] 
 We obtain the estimates \eqref{eq4.13} and \eqref{eq4.14} as follows:
\begin{eqnarray} \label{eq4.13}
\left\lvert \frac{\partial}{\partial w_n} (\mathbf{K} \tilde{f})(\tilde{w})\right\rvert &\le& C \left( \int_{\Omega_{E_\varepsilon}} \lvert y-\tilde{w}\rvert^{-p'} dy\right)^{1/p'} \|\tilde{f}\|_{L^p(\Omega_{E_\varepsilon})}\\
&\le& C\left(\int_{\eps M_\text{min}}^R\int_{0}^{2\pi} r^{-p'} r dr d\theta\right)^{1/p'} \|\tilde{f}\|_{L^p(\Omega_{E_\varepsilon})} \nonumber\\
&\le& \begin{cases}
    C\lvert \log \varepsilon \rvert^{1/2} \|\tilde{f}\|_{L^2(\Omega_{E_\varepsilon})} & \text{if }p=2 \nonumber\\
 C \|\tilde{f}\|_{L^p(\Omega_{E_\varepsilon})} &\text{if } p>2
\end{cases}
\end{eqnarray}
for $n=1,2$, $p'$ satisfies $(1/p)+(1/p') = 1$.

Additionally, we have
	\begin{eqnarray} \label{eq4.14}
	\left\lvert \frac{\partial^2}{\partial w_m \partial w_n} (\mathbf{K} \tilde{f})(\tilde{w})\right\rvert &\le& C \left( \int_{\Omega_{E_\varepsilon}}\lvert y-\tilde{w}\rvert^{-2p'} dy\right)^{1/p'} \|\tilde{f}\|_{L^p(\Omega_{E_\varepsilon})} \nonumber\\
 &\le&  C\left(\int_{\eps M_\text{min}}^R\int_{0}^{2\pi} r^{-2p'} r dr d\theta\right)^{1/p'} \|\tilde{f}\|_{L^p(\Omega_{E_\varepsilon})} \nonumber\\
 &\le& 
     C  \varepsilon^{-2/p} \|\tilde{f}\|_{L^p(\Omega_{E_\varepsilon})}  \qquad \text{if } p>1 
	\end{eqnarray}
	for $m \ge 1$, $ n \le 2$.
	
	On the other hand, we see that
\begin{eqnarray} \label{eq4.15}
\frac{\partial}{\partial x_n}\left\langle  \nabla_w S(x,\tilde{w}), \nabla(\mathbf{K}\tilde{f})(\tilde{w})\right\rangle &\le& C \left( \int_{\Omega_{E_\varepsilon}} \lvert y-\tilde{w}\rvert^{-p'} dy \right)^{1/p'} \|\tilde{f}\|_{L^p(\Omega_{E_\varepsilon})} \nonumber\\
&\le& C \|\tilde{f}\|_{L^p(\Omega_{E_\varepsilon})},
\end{eqnarray}
\begin{eqnarray} \label{eq4.16}
\frac{\partial}{\partial x_n}\left\langle  H_w S(x,\tilde{w}), H_w (\mathbf{K} \tilde{f})(\tilde{w})\right\rangle &\le& C \left( \int_{\Omega_{E_\varepsilon}} \lvert y-\tilde{w}\rvert^{-2p'} dy\right)^{1/p'} \|\tilde{f}\|_{L^p(\Omega_{E_\varepsilon})} \nonumber\\
	&\le& C  \varepsilon^{-2/p} \|\tilde{f}\|_{L^p(\Omega_{E_\varepsilon})} \qquad \text{ for } n=1,2.
\end{eqnarray}
By combining the estimates \eqref{eq4.12}, \eqref{eq4.14}, \eqref{eq4.15}, and \eqref{eq4.16}, we obtain
 \begin{eqnarray*}
\max_{x \in \partial E_\varepsilon} \left\lvert \frac{\partial}{\partial \nu}k_\varepsilon(x) \right\rvert &\le& C \left(\varepsilon^s + \varepsilon^{1-2/p}+ \varepsilon^2 + \varepsilon^{4-2/p}\right)
\|\tilde{f}\|_{L^p(\Omega_{E_\varepsilon})}\\
 &\le& C \varepsilon^{1-2/p}
\|f\|_{L^p(\Omega_{E_\varepsilon})}
\end{eqnarray*}
for $p>2$.

Using Lemma \ref{lemma3.3}, we can conclude that
\begin{eqnarray*}
\|\mathbf{H_\varepsilon} - \mathbf{K_\varepsilon}\|_{L^p(\Omega_{E_\varepsilon})} \le C \varepsilon^{2-2/p} |\log\eps|,
\end{eqnarray*}
which completes the proof of Lemma \ref{lemma4.3}.
\end{proof}
In the following proposition, we establish the convergence rate of the difference between operators $\mathbf{H_\varepsilon}$ and $\mathbf{K_\varepsilon}$ in the $L^p$ norm, providing an estimate for their approximation error for $p \in (1, \infty]$.
\begin{proposition}\label{proposition4.4}
     Fix $p \in (1, \infty]$. Then, for any fixed $s \in (0,1)$, the operator norm $$\|\mathbf{H_\varepsilon} - \mathbf{K_\varepsilon}\|_{L^p(\Omega_{E_\varepsilon}) } = \mathcal{O}(\varepsilon^{1+s})$$ as $\varepsilon$ tends to zero.
\end{proposition}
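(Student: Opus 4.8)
The plan is to bootstrap Lemma~\ref{lemma4.3} --- which controls $\|\mathbf{H_\varepsilon}-\mathbf{K_\varepsilon}\|_{L^p(\Omega_{E_\varepsilon})}$ only for $p\in(2,\infty)$ and with a superfluous logarithmic factor --- up to the full range $p\in(1,\infty]$ with the clean rate $\mathcal{O}(\varepsilon^{1+s})$, by spending part of the polynomial gain to absorb the logarithm, then propagating the estimate down to small $p$ by duality and filling the remaining window by interpolation. Write $T_\varepsilon:=\mathbf{H_\varepsilon}-\mathbf{K_\varepsilon}$, whose integral kernel is $q_\varepsilon(x,y)=h_\varepsilon(x,y)-K_\varepsilon(x,y)$. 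Fix $s\in(0,1)$ and choose, once and for all, a finite exponent $q=q(s)>2$ with $2-2/q>1+s$ (any $q>2/(1-s)$ works). Since the exponent $2-2/q-(1+s)$ is positive, the quantity $\varepsilon^{2-2/q-(1+s)}|\log\varepsilon|$ stays bounded as $\varepsilon\to0$, so Lemma~\ref{lemma4.3} already yields $\|T_\varepsilon\|_{L^q(\Omega_{E_\varepsilon})}\le C\varepsilon^{2-2/q}|\log\varepsilon|=\mathcal{O}(\varepsilon^{1+s})$. Moreover, inspecting the proof of Lemma~\ref{lemma4.3}, it in fact delivers the stronger mapping property $\|T_\varepsilon f\|_{L^\infty(\Omega_{E_\varepsilon})}\le C\varepsilon^{2-2/q}|\log\varepsilon|\,\|f\|_{L^q(\Omega_{E_\varepsilon})}$, because $k_\varepsilon=T_\varepsilon f$ is harmonic in $\Omega_{E_\varepsilon}$, vanishes on $\partial\Omega$, and Lemma~\ref{lemma3.3} converts the bound on $\partial_\nu k_\varepsilon$ along $\partial E_\varepsilon$ into a pointwise bound. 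As $|\Omega_{E_\varepsilon}|\le|\Omega|$, the spaces $L^r(\Omega_{E_\varepsilon})$ are nested, and from $\|f\|_{L^q}\le|\Omega|^{1/q-1/p}\|f\|_{L^p}$ together with $\|T_\varepsilon f\|_{L^p}\le|\Omega|^{1/p}\|T_\varepsilon f\|_{L^\infty}$ one gets $\|T_\varepsilon\|_{L^p(\Omega_{E_\varepsilon})}=\mathcal{O}(\varepsilon^{1+s})$ for every $p\in[q,\infty]$.

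Next I would reach small exponents by duality. Both kernels are symmetric: symmetry of the Green function $K_\varepsilon(x,y)$ is classical, and symmetry of $h_\varepsilon(x,y)$ follows termwise from its definition, since $K(a,b)=K(b,a)$ makes $\langle\nabla_wK(x,\tilde w),\nabla_wK(\tilde w,y)\rangle$ and $\langle H_wK(x,\tilde w),H_wK(\tilde w,y)\rangle$ invariant under $x\leftrightarrow y$. Hence $q_\varepsilon$ is symmetric, and for conjugate exponents $1/p+1/p'=1$ the Banach-space adjoint of the bounded operator $T_\varepsilon$ on $L^{p'}(\Omega_{E_\varepsilon})$ is exactly $T_\varepsilon$ on $L^{p}(\Omega_{E_\varepsilon})$ (by Fubini and kernel symmetry), so $T_\varepsilon$ is bounded on $L^p$ with $\|T_\varepsilon\|_{L^p(\Omega_{E_\varepsilon})}=\|T_\varepsilon\|_{L^{p'}(\Omega_{E_\varepsilon})}$. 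Applying this with $p\in(1,q']$, where $q'=q/(q-1)$, so that $p'\in[q,\infty)$, the first paragraph gives $\|T_\varepsilon\|_{L^p(\Omega_{E_\varepsilon})}=\|T_\varepsilon\|_{L^{p'}(\Omega_{E_\varepsilon})}=\mathcal{O}(\varepsilon^{1+s})$ for all $p\in(1,q']$.

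It remains to fill the window $p\in(q',q)$. Here I would apply the Riesz--Thorin interpolation theorem to $T_\varepsilon$ between the endpoints $q'$ and $q$: for $\theta\in(0,1)$ and $p_\theta$ defined by $1/p_\theta=(1-\theta)/q'+\theta/q$ one obtains $\|T_\varepsilon\|_{L^{p_\theta}(\Omega_{E_\varepsilon})}\le\|T_\varepsilon\|_{L^{q'}(\Omega_{E_\varepsilon})}^{\,1-\theta}\,\|T_\varepsilon\|_{L^{q}(\Omega_{E_\varepsilon})}^{\,\theta}=\mathcal{O}(\varepsilon^{1+s})$, and as $\theta$ ranges over $(0,1)$ the exponent $p_\theta$ sweeps all of $(q',q)$. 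Together with the two previous paragraphs this covers $p\in(1,\infty]$ and proves the proposition. The main difficulty is not a single hard estimate but rather three points of care: realizing that the logarithm in Lemma~\ref{lemma4.3} is harmless once one concedes an arbitrarily small power of $\varepsilon$ (which is precisely why the statement must restrict to $s<1$ and allow a constant depending on $s$); verifying the kernel symmetry so that the duality identity is legitimate; and checking that $T_\varepsilon$ is genuinely a bounded operator on each $L^p(\Omega_{E_\varepsilon})$ --- guaranteed by the $L^p\to L^\infty$ property established in the first paragraph --- so that both duality and Riesz--Thorin apply.
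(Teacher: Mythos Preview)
Your argument is correct and follows essentially the same route as the paper: use Lemma~\ref{lemma4.3} at a large exponent $q$ (sacrificing part of the power to absorb $|\log\varepsilon|$), invoke symmetry of the kernel $q_\varepsilon$ to transfer the bound to the dual exponent $q'$, and fill $(q',q)$ by Riesz--Thorin. The only cosmetic differences are that you fix $q$ once in terms of $s$ and then handle all $p\ge q$ (including $p=\infty$) via the $L^q\to L^\infty$ estimate implicit in Lemma~\ref{lemma3.3}, whereas the paper lets $q$ depend on $p$ and treats $p=\infty$ by a separate appeal to the proof of Lemma~\ref{lemma4.3}; the substance is the same.
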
 
\begin{proof}
	Assuming $p \in (1, \infty)$, let $\mathbf{Q_\varepsilon} = \mathbf{H_\varepsilon} - \mathbf{K_\varepsilon}$. Since $\mathbf{Q_\varepsilon}$ is self-adjoint on $L^2(\Omega_{E_\varepsilon})$, it follows that $\|\mathbf{Q_\varepsilon}\|_{L^q(\Omega_{E_\varepsilon})} = \|\mathbf{Q_\varepsilon}\|_{L^{q'}(\Omega_{E_\varepsilon})}$, where $\frac{1}{q} + \frac{1}{q'} = 1$.

By applying the Riesz-Thorin interpolation theorem, we can conclude that $\|\mathbf{Q}_\varepsilon\|_{L^p(\Omega_{E_\varepsilon})} \le \|\mathbf{Q}_\varepsilon\|_{L^q(\Omega_{E_\varepsilon})}$ for any $p \in (q',q)$, where $q > 2$. By choosing a sufficiently large $q$ and utilizing Lemma \ref{lemma4.3}, we obtain Proposition \ref{proposition4.4} for $p \neq 1, \infty$.

Now, consider the case when $p = \infty$. Using a similar argument as in the proof of Lemma \ref{lemma4.3}, we can establish Proposition \ref{proposition4.4} with $p = \infty$.

Therefore, by combining the results for $p \in (1, \infty)$ and $p = \infty$, we conclude the validity of Proposition \ref{proposition4.4} for all $p \in (1, \infty]$.
\end{proof}

Next, we aim to estimate $\|(\mathbf{K}_\varepsilon - \mathbf{H}_\varepsilon) (\chi_\varepsilon\varphi_i)\|_{L^2(\Omega_{E_\varepsilon})}$. Let us denote $ v_\varepsilon= (\mathbf{K}_\varepsilon - \mathbf{H}_\varepsilon) (\chi_\varepsilon\varphi_i)$. Put $\hat{\chi}_\varepsilon= 1- \chi_\varepsilon$. By using equation (\ref{eq4.10}), we obtain the following expression 
\begin{align}\label{eq4.17}
    \left. \frac{\partial}{\partial \nu}  v_\varepsilon(x)\right\rvert_{ x=x^*} = \nu \left(I_1(\varepsilon) - I_2(\varepsilon) + I_3(\varepsilon) \right),
\end{align}
		where
		\begin{eqnarray*}
		I_1(\varepsilon)&=&  \nabla(\mathbf{K}\varphi_i)(x^*) -  \nabla(\mathbf{K}\varphi_i)(\tilde{w}) - \left(x^*_1\frac{\partial^2}{\partial w_1^2} (\mathbf{K}\varphi_i)(\tilde{w}),x^*_2\frac{\partial^2}{\partial w_2^2} (\mathbf{K}\varphi_i)(\tilde{w}) \right) \\
      &&- \left(x^*_2\frac{\partial^2}{\partial w_2 \partial w_1} (\mathbf{K}\varphi_i)(\tilde{w}),x^*_1\frac{\partial^2}{\partial w_1 \partial w_2} (\mathbf{K}\varphi_i)(\tilde{w}) \right)  \\
		I_2(\varepsilon)&=& \nabla(\mathbf{K}\hat{\chi}_\eps\varphi_i)(x^*) -  \nabla(\mathbf{K}\hat{\chi}_\eps\varphi_i)(\tilde{w}) - \left(x^*_1\frac{\partial^2}{\partial w_1^2} (\mathbf{K}\hat{\chi}_\eps\varphi_i)(\tilde{w}),x^*_2\frac{\partial^2}{\partial w_2^2} (\mathbf{K}\hat{\chi}_\eps\varphi_i)(\tilde{w}) \right) \\
      &&- \left(x^*_2\frac{\partial^2}{\partial w_2 \partial w_1} (\mathbf{K}\hat{\chi}_\eps\varphi_i)(\tilde{w}),x^*_1\frac{\partial^2}{\partial w_1 \partial w_2} (\mathbf{K}\hat{\chi}_\eps\varphi_i)(\tilde{w}) \right) \\
I_3(\varepsilon) &=&\mathcal{O}(\eps^2) \nabla_x\left\langle  \nabla_w S(x,\tilde{w}), \nabla_w (\mathbf{K} \chi_\varepsilon\varphi_i)(\tilde{w})\right\rangle \nonumber \\
&&+ \mathcal{O}(\eps^4) \nabla_x\left\langle  H_w S(x^*,\tilde{w}), H_w (\mathbf{K}\chi_\varepsilon\varphi_i)(\tilde{w})\right\rangle.
	\end{eqnarray*}
	We can express $I_1(\varepsilon)$ as follows, taking into account that $\mathbf{K}\varphi_i(x) = \mu_i^{-1} \varphi_i(x)$:
\begin{align*}
I_1(\varepsilon) =& \mu_i^{-1}\bigg(  \nabla\varphi_i(x^*) -  \nabla\varphi_i(\tilde{w}) - \left(x^*_1\frac{\partial^2}{\partial w_1^2} (\varphi_i)(\tilde{w}),x^*_2\frac{\partial^2}{\partial w_2^2} (\varphi_i)(\tilde{w}) \right) \\
      &- \left(x^*_2\frac{\partial^2}{\partial w_2 \partial w_1} (\varphi_i)(\tilde{w}),x^*_1\frac{\partial^2}{\partial w_1 \partial w_2} (\varphi_i)(\tilde{w}) \right)\bigg)
\end{align*}
By using the Taylor expansion at $\tilde{w}=(0,0)$ for $\frac{\partial}{\partial x_n} \varphi_i(x^*)$, $n=1,2$ we obtain:
\begin{align*}
\frac{\partial}{\partial x_1} \varphi_i(x^*) = \frac{\partial}{\partial w_1}\varphi_i(\tilde{w}) + x^*_1\frac{\partial^2}{{\partial w_1}^2} \varphi_i(\tilde{w}) +x^*_2\frac{\partial^2}{\partial w_2\partial w_1} (\varphi_i)(\tilde{w})+ \mathcal{O}(\eps^2),\\
\frac{\partial}{\partial x_2} \varphi_i(x^*) = \frac{\partial}{\partial w_2}\varphi_i(\tilde{w}) + x^*_2\frac{\partial^2}{{\partial w_2}^2} \varphi_i(\tilde{w}) +x^*_1\frac{\partial^2}{\partial w_1\partial w_2} (\varphi_i)(\tilde{w})+ \mathcal{O}(\eps^2),
\end{align*}
 which implies that
	\begin{eqnarray} \label{eq4.18}
	\lvert I_1(\varepsilon)\rvert \le C \varepsilon^2.
	\end{eqnarray}
 Furthermore, we have the following estimate:
	\begin{eqnarray} \label{eq4.19}
	\lvert I_3(\varepsilon)\rvert &\le& C (\varepsilon^2 + \varepsilon^4 \varepsilon^{-2/p}) \le C \varepsilon^2.
	\end{eqnarray}
 To estimate $I_2(\varepsilon)$, we introduce the function $L(x,y)= -(2\pi)^{-1}\log \lvert x-y \rvert$. Then, we can express $I_2(\varepsilon)$ as:
		\begin{eqnarray} \label{eq4.20}
			I_2(\varepsilon) = \left(I_4(\varepsilon)+ I_5(\varepsilon)+ I_6(\varepsilon), I'_4(\varepsilon)+ I'_5(\varepsilon)+ I'_6(\varepsilon)\right),
		\end{eqnarray}
	where
	\begin{eqnarray*}
	I_4(\varepsilon) &=& \frac{\partial}{\partial x_1} \left( \int_{E_\varepsilon} L(x^*,y)(\varphi_i(y)-\varphi_i(x^*))dy\right) \\
	&& - \frac{\partial}{\partial w_1} \int_{E_\varepsilon} L(w,y)(\varphi_i(y)-\varphi_i(w))dy \\
	&&- (x^*_1-w_1) \frac{\partial^2}{{\partial w_1}^2} \int_{E_\varepsilon} L(w,y)(\varphi_i(y) -\varphi_i(w)- \sum_{n=1}^{2}(y_n-w_n)\frac{\partial \varphi_i}{\partial w_n}(w)) dy \\
 &&- (x^*_2-w_2) \frac{\partial^2}{\partial w_2\partial w_1} \int_{E_\varepsilon} L(w,y)(\varphi_i(y) -\varphi_i(w)- \sum_{n=1}^{2}(y_n-w_n)\frac{\partial \varphi_i}{\partial w_n}(w)) dy \\
	I_5(\varepsilon) &=& \frac{\partial}{\partial x_1} (\varphi_i(x^*) F(x^*)) -\left(\frac{\partial}{\partial w_1}+ (x^*_1-w_1) \frac{\partial^2}{ {\partial w_1}^2 }\right) (\varphi_i(w)F(w)) \\
	&&- (x^*_1-w_1)\sum_{n=1}^{2} \frac{\partial^2}{{\partial w_1}^2}\left(\frac{\partial \varphi_i}{\partial w_n} (w) K_n(w) \right) - (x^*_2-w_2)\sum_{n=1}^{2} \frac{\partial^2}{{\partial w_2\partial w_1}}\left(\frac{\partial \varphi_i}{\partial w_n} (w) K_n(w) \right)  \\
	I_6(\varepsilon) &=& \frac{\partial}{\partial x_1} (\mathbf{S}\hat{\chi}_\varepsilon \varphi_i)(x^*) - \left(\frac{\partial}{\partial w_1} + (x^*_1-w_1) \frac{\partial^2}{{\partial w_1}^2} + (x^*_2-w_2) \frac{\partial^2}{\partial w_2\partial w_1} \right) (\mathbf{S} \hat{\chi}_\varepsilon \varphi_i)(w),
	\end{eqnarray*} 
 \begin{eqnarray*}
	I'_4(\varepsilon) &=& \frac{\partial}{\partial x_2} \left( \int_{E_\varepsilon} L(x^*,y)(\varphi_i(y)-\varphi_i(x^*))dy\right) \\
	&& - \frac{\partial}{\partial w_2} \int_{E_\varepsilon} L(w,y)(\varphi_i(y)-\varphi_i(w))dy \\
	&&- (x^*_2-w_2) \frac{\partial^2}{{\partial w_1}^2} \int_{E_\varepsilon} L(w,y)(\varphi_i(y) -\varphi_i(w)- \sum_{n=1}^{2}(y_n-w_n)\frac{\partial \varphi_i}{\partial w_n}(w)) dy \\
 &&- (x^*_1-w_1) \frac{\partial^2}{\partial w_2\partial w_1} \int_{E_\varepsilon} L(w,y)(\varphi_i(y) -\varphi_i(w)- \sum_{n=1}^{2}(y_n-w_n)\frac{\partial \varphi_i}{\partial w_n}(w)) dy \\
	I'_5(\varepsilon) &=& \frac{\partial}{\partial x_2} (\varphi_i(x^*) F(x)) -\left(\frac{\partial}{\partial w_2}+ (x^*_2-w_2) \frac{\partial^2}{ {\partial w_2}^2 }\right) (\varphi_i(w)F(w)) \\
	&&- (x^*_2-w_2)\sum_{n=1}^{2} \frac{\partial^2}{{\partial w_2}^2}\left(\frac{\partial \varphi_i}{\partial w_n} (w) K_n(w) \right) - (x^*_1-w_1)\sum_{n=1}^{2} \frac{\partial^2}{{\partial w_2\partial w_1}}\left(\frac{\partial \varphi_i}{\partial w_n} (w) K_n(w) \right)  \\
	I'_6(\varepsilon) &=& \frac{\partial}{\partial x_2} (\mathbf{S}\hat{\chi}_\varepsilon \varphi_i)(x^*) - \left(\frac{\partial}{\partial w_2} + (x^*_2-w_2) \frac{\partial^2}{{\partial w_2}^2} + (x^*_1-w_1) \frac{\partial^2}{\partial w_2\partial w_1} \right) (\mathbf{S} \hat{\chi}_\varepsilon \varphi_i)(w),
	\end{eqnarray*}
 for $w=\tilde{w}=0$. In the given expression, the operator $\mathbf{S}$ and functions $F(x)$ and $K_n(w)$ are defined as follows:
	\begin{eqnarray*}
	(\mathbf{S}f)(x) &=& \int_\Omega S(x,y) f(y)dy \\
	F(x) &=& \int_{E_\varepsilon} L(x,y)dy \\
	K_n(w) &=& \int_{E_\varepsilon} L(w,y)(y_n-w_n)dy \qquad (n=1,2).
	\end{eqnarray*}
	Since $(\mathbf{S}f)(x)$ is a smooth function on $\Omega$, we can estimate $I_6(\varepsilon)$ as follows:
	\begin{eqnarray} \label{eq4.21}
	\lvert I_6(\varepsilon) \rvert \le C \varepsilon^2.
	\end{eqnarray}
  We evaluate the partial derivative of the integral $ \frac{\partial}{\partial x_1}\int_{E_\varepsilon} L(x,y)(\varphi_i(y)-\varphi_i(x))dy$  at the point $x=x^*$. Using the properties of eigenfunctions, we can express $\varphi_i(y)$ as $\varphi_i(y) = \varphi_i(x) + \mathcal{O}(\lvert x-y\rvert)$. Substituting this into the expression, we have:
\begin{align*}
&\frac{\partial}{\partial x_1} \int_{E_\varepsilon} L(x,y)(\varphi_i(y)-\varphi_i(x))dy\bigg\vert_{x=x^*} \\
&= -(2\pi)^{-1}\int_{E_\varepsilon} \left( \frac{\partial}{\partial x_1} \log \lvert x-y \rvert(\varphi_i(y)-\varphi_i(x)) +  \log \lvert x-y \rvert\frac{\partial}{\partial x_1} \varphi_i(x) \right)dy \bigg\vert_{x=x^*}  \\
&=   -(2\pi)^{-1} \int_{E_{\varepsilon}} \left( \frac{y_1-x_1}{\lvert x-y \rvert^2} \mathcal{O}( \lvert x-y\rvert) + \frac{\partial}{\partial x_1}\varphi_i(x)  \log \lvert x-y\rvert \right) dy\bigg\vert_{x=x^*} \\
&\le C  \int_{E_{\varepsilon}} (1+ \log \lvert x-y\rvert) dy\bigg\vert_{x=x^*}  \\
& \le C\int_{E_{\varepsilon}} \log \lvert x-y\rvert dy\bigg\vert_{x=x^*}\\
&\le C\int_{\mathbb{B}(x, 2\varepsilon M_{\text{max}})} \log \lvert x-y\rvert dy\bigg\vert_{x=x^*}\\
& \le C \varepsilon^2 \lvert \log \varepsilon\rvert,
\end{align*}
where $C$ is a constant independent of $\varepsilon$ and $M_{\max} = \sup_{x\in\partial E}|x|$. Note that $E_\varepsilon \subset \mathbb{B}(x,2\varepsilon M_{\text{max}})$, which allows us to replace the integral over $E_\varepsilon$ with the integral over the larger ball.

Similarly, we can also establish the following inequalities:
\begin{align*}
&\frac{\partial}{\partial w_1} \int_{E_\varepsilon} L(w,y)(\varphi_i(y)-\varphi_i(w))dy\leq C \int_{E_{\varepsilon}} L(\tilde{w},y) dy, \\
&\frac{\partial^2}{\partial w_1^2} \int_{E_\varepsilon} L(w,y)\left(\varphi_i(y) -\varphi_i(w)- \sum_{n=1}^{2}(y_n-w_n)\frac{\partial \varphi_i}{\partial w_n}(w)\right) dy \leq C \int_{E_{\varepsilon}} L(\tilde{w},y) dy.
\end{align*}
   Combining these estimates, we can derive the following inequality:
\begin{equation}
    \begin{aligned} \label{eq4.22}
\lvert I_4(\varepsilon) \rvert &\leq C \int_{E_\varepsilon} \log \lvert x-y \rvert dy \bigg\vert_{x=x^*} + C \int_{E_\varepsilon} \log \lvert\tilde{w}-y\rvert dy \\
&\qquad+ C \varepsilon \int_{E_\varepsilon} \log \lvert \tilde{w}-y\rvert dy \\
&\leq C \varepsilon^2 \lvert \log \varepsilon \rvert.
\end{aligned}
\end{equation}

After performing the necessary calculations, we arrive at the desired results. 
More precisely, by the mean value theorem, there exist points $c\in E_\eps$, $d\in \mathbb{B}_{M\eps}$, where $M$ is given by \eqref{HaiThamGraz} such that
\begin{align*}
    \int_{E_\eps}L(x,y)dy= \frac{-1}{2\pi} \log|x-c| |E_\eps|,\\
    \int_{\mathbb{B}_{M\eps}}L(x,y)dy= \frac{-1}{2\pi} \log|x-d| |\mathbb{B}_{M\eps}|.
\end{align*}
Since $|\mathbb{B}_{M\eps}|= |E_\eps|$, we can deduce that
\begin{align*}
    F(x)= \frac{\log|x-c|}{\log|x-d|}  \int_{\mathbb{B}_{M\eps}}L(x,y)dy,
\end{align*}
or 
	\begin{align} 
	F(x) = 
 \begin{cases}\label{eq4.23}
     \frac{\log|x-c|}{\log|x-d|}\left(- \frac{1}{2} (M\varepsilon)^2 \lvert \log |x|\rvert\right)  \quad \text{for } x \in  \mathbb{R}^2\backslash \overline{\mathbb{B}}_{M\varepsilon} \\
	  \frac{\log|x-c|}{\log|x-d|}\left(-\frac{1}{2} (M\varepsilon)^2\lvert \log M\varepsilon\rvert + \frac{(M\varepsilon)^2 }{4} -\frac{\lvert x \rvert^2}{4} \right)
	 \quad  \text{for } x \in \mathbb{B}_{M\varepsilon}, 
 \end{cases}
	\end{align}
 and for $K_n(w)$, we have:
\begin{align} \label{eq4.24}
	K_n(w) =   \frac{\log|w-c|}{\log|w-d|}w_n \left( \frac{1}{2}(M\varepsilon)^2   \lvert \log (M\varepsilon)\rvert - \frac{(M\varepsilon)^2}{4}  +\frac{\lvert w \rvert^2}{4}\right) \quad  \text{for } w \in \mathbb{B}_\varepsilon.
	\end{align}
	Since $x^* \in \partial\mathbb{B}_{M\eps}$, we have
	\begin{align} \label{eq4.25}
	F(x^*) &= \frac{\log|x^*-c|}{\log|x^*-d|}\left(-\frac{1}{2}  (M\varepsilon)^2 \lvert \log (M\varepsilon)\rvert\right), \nonumber\\
	\partial F(x^*)/\partial x_1 &= \frac{\partial}{\partial x_1}\frac{\log|x^*-c|}{\log |x^*-d|} \left( \frac{-1}{2}(M\eps)^2|\log(M\eps)|\right)+\frac{\log|x^*-c|}{\log |x^*-d|}\left(-\frac{1}{2} x^*_1\right).
	\end{align}
   For $w=(0,0)$, we obtain
	\begin{align} \label{eq4.26}
 \begin{split}
     F(w) &=\frac{\log c}{\log d}\left(-\frac{1}{2} (M\varepsilon)^2 \lvert \log ( M\varepsilon)\rvert + \frac{ (M\varepsilon)^2 }{4} \right)\\
	\partial F(w)/\partial w_1 &= \frac{\partial}{\partial w_1}\frac{\log|w-c|}{\log |w-d|}\Big|_{w=(0,0)} \left( \frac{-1}{2}(M\eps)^2|\log(M\eps)| +\frac{(M\eps)^2}{4} \right), \\
	\partial^2F(w)/ \partial w_1^2 &= \frac{\partial^2}{\partial w_1^2}\frac{\log|w-c|}{\log |w-d|}\Big|_{w=(0,0)} \left( \frac{-1}{2}(M\eps)^2|\log(M\eps)| +\frac{(M\eps)^2}{4} \right) -\frac{1}{2}\frac{\log c}{\log d}.
 \end{split}
	\end{align}
 Additionally, we have
	\begin{equation} \label{eq4.27}
 \begin{split}
     K_n(w) &= \partial^2 K_n(w)/\partial w_1^2 = 0, \\
	\partial K_n(w)/\partial w_1 &= \delta_{1,n} \frac{\log c}{\log d}\left( \frac{1}{2}(M\varepsilon)^2  \lvert \log ( M\varepsilon)\rvert - \frac{1}{4}(M\varepsilon)^2\right)
 \end{split}
 \end{equation}
where $\delta_{1,n}$ is the Kronecker delta. 
Summing up these results, we obtain:
	 \begin{equation}\label{eq4.28}
	     \begin{aligned}
	          I_5(\varepsilon) &= - \frac{1}{2} x^*_1 \left( \varphi(x^*)\frac{\log|x^*-c|}{\log |x^*-d|} - \varphi(w) \frac{\log|w-c|}{\log |w-d|} \right) + \mathcal{O}(\varepsilon^2 \lvert \log \varepsilon\rvert) \\
	 &=  \mathcal{O}(\varepsilon^2 \lvert \log \varepsilon \rvert).
	     \end{aligned}
	 \end{equation} 
Using estimates \eqref{eq4.17}, \eqref{eq4.18}, \eqref{eq4.19}, \eqref{eq4.20}, \eqref{eq4.21}, \eqref{eq4.22} and \eqref{eq4.28}, we conclude that: 
	 $$ \left\lvert\frac{\partial}{\partial x_1}  v_\varepsilon(x)\right\rvert_{ x=x^*} \le C \varepsilon^2 \lvert\log \varepsilon\rvert. $$
In the same way estimating $I_4(\varepsilon)$, $I_5(\varepsilon)$ and $I_6(\varepsilon)$, we can carry out $I'_4(\varepsilon)$, $I'_5(\varepsilon)$ and $I'_6(\varepsilon)$ and obtain the same results, which yields
    $$ \left\lvert\frac{\partial}{\partial x_2}  v_\varepsilon(x)\right\rvert_{ x=x^*} \le C \varepsilon^2 \lvert\log \varepsilon\rvert. $$
Combining these facts, we derive 
$$ \left| \frac{\partial}{\partial \nu}  v_\varepsilon(x)\right\rvert_{ x=x^*} \le C \varepsilon^2 \lvert\log \varepsilon\rvert. $$
	 By Lemma \ref{lemma3.3}, we can further deduce that
	 \[ \|v_\varepsilon\|_{L^2(\Omega_{E_\varepsilon})} \le C \varepsilon^3 |\log \eps|^2.\] 
	 Therefore, we obtain Theorem \ref{theo4.1}.
\subsection{Convergence of eigenvalues}
In the context of analyzing the convergence of eigenvalues, we introduce a new kernel $\tilde{h}(x,y)$ defined as follows:
\begin{equation}\label{eq4.29}
\begin{split}
     \tilde{h}_\varepsilon(x,y) =K(x,y) &+ h(\varepsilon)\left\langle \nabla_w  K(x,\tilde{w}), \nabla_w K(\tilde{w},y)\right\rangle \chi_\varepsilon(x)\chi_\varepsilon(y) \\
&+  i(\varepsilon)\left\langle H_w  K(x,\tilde{w}), H_w K(\tilde{w},y)\right\rangle \chi_\varepsilon(x)\chi_\varepsilon(y). 
\end{split}
\end{equation}
Here, $K(x,y)$ is a given kernel, and $\nabla_w K(x,\tilde{w})$ and $H_w K(x,\tilde{w})$ represent the gradient and Hessian of $K$ with respect to the variable $w$, respectively. The functions $h(\varepsilon)$ and $i(\varepsilon)$ are introduced as weight functions that depend on a small parameter $\varepsilon$, and $\chi_\varepsilon(x)$ is a characteristic function defined as $\chi_\varepsilon(x) = 1$ if $x \in \bar{\Omega}_{E_\varepsilon}$ and $\chi_\varepsilon(x) = 0$ otherwise.

Using the kernel $\tilde{h}_\varepsilon(x,y)$, we define the operator $\mathbf{\tilde{H}}_\varepsilon$ as:
\begin{align*}
    (\mathbf{\tilde{H}}_\varepsilon f) (x) = \int _{\Omega} \tilde{h}_\varepsilon (x,y) f(y) dy.
\end{align*}
\begin{lemma}\label{lemma4.5}
  There exists a constant $C$ independent of $\varepsilon$ such that
    \begin{align}\label{eq4.30}
    \| \mathbf{\tilde{H}}_\varepsilon - \chi_\varepsilon \mathbf{\tilde{H}}_\varepsilon \chi_\varepsilon \|_{L^2(\Omega)} \le C \varepsilon
   \end{align}
   holds.
\end{lemma}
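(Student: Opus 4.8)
The plan is to identify the integral kernel of $\mathbf{\tilde{H}}_\varepsilon - \chi_\varepsilon \mathbf{\tilde{H}}_\varepsilon \chi_\varepsilon$ explicitly and to exploit an algebraic cancellation before estimating anything. Since $\chi_\varepsilon \mathbf{\tilde{H}}_\varepsilon \chi_\varepsilon$ has kernel $\chi_\varepsilon(x)\tilde{h}_\varepsilon(x,y)\chi_\varepsilon(y)$, the difference operator has kernel $\tilde{h}_\varepsilon(x,y)\bigl(1-\chi_\varepsilon(x)\chi_\varepsilon(y)\bigr)$. By the very form of \eqref{eq4.29}, the second and third terms of $\tilde{h}_\varepsilon$ already carry the factor $\chi_\varepsilon(x)\chi_\varepsilon(y)$; writing $a=\chi_\varepsilon(x)\chi_\varepsilon(y)\in\{0,1\}$ and using $a(1-a)=0$, these two terms contribute nothing to the difference kernel. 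Hence
\[
\bigl(\mathbf{\tilde{H}}_\varepsilon - \chi_\varepsilon \mathbf{\tilde{H}}_\varepsilon \chi_\varepsilon\bigr)f(x) = \int_\Omega K(x,y)\bigl(1-\chi_\varepsilon(x)\chi_\varepsilon(y)\bigr)f(y)\,dy .
\]

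Next I would split $1-\chi_\varepsilon(x)\chi_\varepsilon(y) = \hat{\chi}_\varepsilon(x) + \chi_\varepsilon(x)\hat{\chi}_\varepsilon(y)$ with $\hat{\chi}_\varepsilon := 1-\chi_\varepsilon$, so that the difference operator equals $\mathbf{A}_\varepsilon + \mathbf{B}_\varepsilon$, where $(\mathbf{A}_\varepsilon f)(x) = \hat{\chi}_\varepsilon(x)(\mathbf{K}f)(x)$ and $(\mathbf{B}_\varepsilon f)(x) = \chi_\varepsilon(x)\bigl(\mathbf{K}(\hat{\chi}_\varepsilon f)\bigr)(x)$. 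For $\mathbf{A}_\varepsilon$, the multiplier $\hat{\chi}_\varepsilon$ is supported in $E_\varepsilon$ and \eqref{eq4.8} with $p=2$ (equivalently, Cauchy--Schwarz together with $\sup_{x\in\Omega}\|K(x,\cdot)\|_{L^2(\Omega)}<\infty$) gives $\|\mathbf{K}f\|_{L^\infty(\Omega)}\le C\|f\|_{L^2(\Omega)}$; therefore
\[
\|\mathbf{A}_\varepsilon f\|_{L^2(\Omega)}^2 = \int_{E_\varepsilon}\lvert(\mathbf{K}f)(x)\rvert^2\,dx \le \lvert E_\varepsilon\rvert\,\|\mathbf{K}f\|_{L^\infty(\Omega)}^2 \le C\lvert E\rvert\,\varepsilon^2\|f\|_{L^2(\Omega)}^2 ,
\]
so that $\|\mathbf{A}_\varepsilon\|_{L^2(\Omega)}\le C\varepsilon$. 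For $\mathbf{B}_\varepsilon$ I would invoke duality: since $K(x,y)$ is real and symmetric, $\mathbf{K}$ is self-adjoint on $L^2(\Omega)$ and $\mathbf{B}_\varepsilon = \chi_\varepsilon\,\mathbf{A}_\varepsilon^{*}$, while multiplication by $\chi_\varepsilon$ has operator norm at most $1$; hence $\|\mathbf{B}_\varepsilon\|_{L^2(\Omega)} \le \|\mathbf{A}_\varepsilon^{*}\|_{L^2(\Omega)} = \|\mathbf{A}_\varepsilon\|_{L^2(\Omega)}\le C\varepsilon$. Adding the two bounds yields \eqref{eq4.30}.

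There is in fact no serious obstacle once the cancellation is observed: the only analytic input is that $\mathbf{K}f$ is uniformly bounded in terms of $\|f\|_{L^2(\Omega)}$, so that its $L^2$-mass over the small hole $E_\varepsilon$ of area $O(\varepsilon^2)$ is $O(\varepsilon^2)$, together with the elementary adjoint argument for the transposed piece. The two points worth recording carefully are that the constant in \eqref{eq4.8} is independent of $x$, and that only the self-adjointness of $\mathbf{K}$ (not of $\mathbf{\tilde{H}}_\varepsilon$) is used — both being immediate from the structure of the Dirichlet Green function.
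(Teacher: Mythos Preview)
Your proof is correct and follows essentially the same approach as the paper. Both arguments observe that the $h(\varepsilon)$ and $i(\varepsilon)$ terms of $\tilde{h}_\varepsilon$ vanish in the difference (via $\chi_\varepsilon(1-\chi_\varepsilon)=0$), reduce to estimating $\hat{\chi}_\varepsilon\mathbf{K}$ using $\|\mathbf{K}f\|_{L^\infty}\le C\|f\|_{L^2}$ together with $|E_\varepsilon|=O(\varepsilon^2)$, and handle the transposed piece by duality. The only cosmetic differences are that the paper splits as $(1-\chi_\varepsilon)\mathbf{\tilde{H}}_\varepsilon\chi_\varepsilon+\mathbf{\tilde{H}}_\varepsilon(1-\chi_\varepsilon)$ and invokes the self-adjointness of $\mathbf{\tilde{H}}_\varepsilon$, whereas you perform the cancellation at the kernel level first and then invoke the self-adjointness of $\mathbf{K}$; these are equivalent.
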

\begin{proof}
We begin by considering the operator $\mathbf{\tilde{H}}_\varepsilon - \chi_\varepsilon \mathbf{\tilde{H}}_\varepsilon \chi_\varepsilon$. Using the expression
\begin{align*}
        \mathbf{\tilde{H}}_\varepsilon - \chi_\varepsilon \mathbf{\tilde{H}}_\varepsilon \chi_\varepsilon = (1-\chi_\varepsilon)\mathbf{\tilde{H}}_\varepsilon \chi_\varepsilon + \mathbf{\tilde{H}}_\varepsilon(1-\chi_\varepsilon),
    \end{align*}
    we can estimate its norm as follows:
    \begin{align}\label{eq4.31}
          \| \mathbf{\tilde{H}}_\varepsilon - \chi_\varepsilon \mathbf{\tilde{H}}_\varepsilon \chi_\varepsilon \|_{L^2(\Omega)} \le  \|(1-\chi_\varepsilon)\mathbf{\tilde{H}}_\varepsilon\|_{L^2(\Omega)} + \| \mathbf{\tilde{H}}_\varepsilon(1-\chi_\varepsilon)\|_{L^2(\Omega)}.
    \end{align}
    Next, we observe that in the $h(\varepsilon)$ and $i(\varepsilon)$ terms of \eqref{eq4.29}, the product $(1-\chi_\varepsilon)\chi_\varepsilon$ equals zero. Therefore, we have the estimate:
    \begin{align*}
        \|(1-\chi_\varepsilon)\mathbf{\tilde{H}}_\varepsilon v\|_{L^2(\Omega)}  \le C \lvert E_\varepsilon \rvert^{1/2} \|\mathbf{K}v\|_{L^2(\Omega)} \le C \varepsilon \|v\|_{L^2(\Omega)},
    \end{align*}
    which holds for any $v\in L^2(\Omega)$. Consequently, we obtain the following inequalities:
    \begin{equation}\label{eq4.32}
        \begin{split}
             &\|(1-\chi_\varepsilon)\mathbf{\tilde{H}}_\varepsilon \|_{L^2(\Omega)}  \le C  \varepsilon\\
             &\|(1-\chi_\varepsilon)\mathbf{\tilde{H}}_\varepsilon \chi_\varepsilon\|_{L^2(\Omega)}  \le C  \varepsilon.
        \end{split}
    \end{equation}
   Since we have a duality relation
    \begin{align*}
        ((1-\chi_\varepsilon)\mathbf{\tilde{H}}_\varepsilon)^\ast = \mathbf{\tilde{H}}_\varepsilon(1-\chi_\varepsilon),
    \end{align*}
    we can further deduce the following: 
    \begin{align}\label{eq4.33}
        \|\mathbf{\tilde{H}}_\varepsilon(1-\chi_\varepsilon)\|_{L^2(\Omega)} \le C \varepsilon.
    \end{align}
    We have completed the proof of the lemma by combining the estimates \eqref{eq4.31} and \eqref{eq4.32}.
\end{proof}
Next, we aim to estimate $\|\mathbf{\tilde{H}}_\varepsilon - \mathbf{K}\|_{L^2(\Omega)}$. For this purpose, we state and prove the following lemma:
\begin{lemma}\label{lemma4.6}
    There exists a constant $C$ independent of $\varepsilon$ such that
    \begin{align*}
         \| \mathbf{\tilde{H}}_\varepsilon -\mathbf{K}\|_{L^2(\Omega)} \le C \varepsilon^2 \lvert \log \varepsilon\rvert
    \end{align*}
    holds.
\end{lemma}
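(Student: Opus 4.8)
The plan is to write out the difference operator from \eqref{eq4.29} explicitly and to estimate it term by term. For $f\in L^2(\Omega)$, putting $g=\chi_\varepsilon f$ and using the symmetry $K(x,y)=K(y,x)$ to carry out the $y$-integration inside the brackets, one obtains
\[
\big((\mathbf{\tilde H}_\varepsilon - \mathbf{K})f\big)(x) = \chi_\varepsilon(x)\Big[\, h(\varepsilon)\big\langle \nabla_w K(x,\tilde w),\, \nabla(\mathbf{K}g)(\tilde w)\big\rangle + i(\varepsilon)\big\langle H_w K(x,\tilde w),\, H_w(\mathbf{K}g)(\tilde w)\big\rangle\,\Big].
\]
Since $h(\varepsilon)=2\pi(M\varepsilon)^2=\mathcal{O}(\varepsilon^2)$ and $i(\varepsilon)=\tfrac{\pi}{2}(M\varepsilon)^4=\mathcal{O}(\varepsilon^4)$, applying Cauchy--Schwarz to the inner products and then taking the $L^2(\Omega)$-norm (noting that $\chi_\varepsilon$ only restricts the domain of integration to $\Omega_{E_\varepsilon}$) gives
\[
\|(\mathbf{\tilde H}_\varepsilon - \mathbf{K})f\|_{L^2(\Omega)} \le C\varepsilon^2 \big|\nabla(\mathbf{K}g)(\tilde w)\big|\,\big\|\nabla_w K(\cdot,\tilde w)\big\|_{L^2(\Omega_{E_\varepsilon})} + C\varepsilon^4 \big|H_w(\mathbf{K}g)(\tilde w)\big|\,\big\|H_w K(\cdot,\tilde w)\big\|_{L^2(\Omega_{E_\varepsilon})},
\]
where for the Hessian factors the norms are taken in the Frobenius sense.

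The next step is to bound each of the four factors using the pointwise Green-function estimates: \eqref{eq4.7} for $\nabla_x K$, its standard second-order analogue $|\nabla_x^2 K(x,y)|\le C|x-y|^{-2}$ (already used implicitly in the derivation of \eqref{eq4.14} and \eqref{eq4.16}), and the inclusion $\Omega_{E_\varepsilon}\subset A(\tilde w;\varepsilon M_{\min},R)$. Passing to polar coordinates about $\tilde w$,
\[
\big\|\nabla_w K(\cdot,\tilde w)\big\|_{L^2(\Omega_{E_\varepsilon})}^2 \le C\!\int_{\varepsilon M_{\min}}^{R}\! r^{-1}\,dr \le C|\log\varepsilon|, \qquad \big\|H_w K(\cdot,\tilde w)\big\|_{L^2(\Omega_{E_\varepsilon})}^2 \le C\!\int_{\varepsilon M_{\min}}^{R}\! r^{-3}\,dr \le C\varepsilon^{-2},
\]
and, by Cauchy--Schwarz in $y$ together with the very same two integrals, $|\nabla(\mathbf{K}g)(\tilde w)|\le C|\log\varepsilon|^{1/2}\|g\|_{L^2(\Omega)}\le C|\log\varepsilon|^{1/2}\|f\|_{L^2(\Omega)}$ and $|H_w(\mathbf{K}g)(\tilde w)|\le C\varepsilon^{-1}\|f\|_{L^2(\Omega)}$. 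Substituting these four bounds, the gradient part contributes $\mathcal{O}(\varepsilon^2|\log\varepsilon|)\|f\|_{L^2(\Omega)}$ and the Hessian part contributes $\mathcal{O}(\varepsilon^4\cdot\varepsilon^{-1}\cdot\varepsilon^{-1})\|f\|_{L^2(\Omega)}=\mathcal{O}(\varepsilon^2)\|f\|_{L^2(\Omega)}$; dividing by $\|f\|_{L^2(\Omega)}$ yields $\|\mathbf{\tilde H}_\varepsilon-\mathbf{K}\|_{L^2(\Omega)}\le C\varepsilon^2|\log\varepsilon|$.

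The only step requiring real care is the Hessian term: the prefactor $i(\varepsilon)$ is of order $\varepsilon^4$, but it must absorb two independent factors of order $\varepsilon^{-1}$ coming from the divergence of $\int_{\Omega_{E_\varepsilon}}|x-\tilde w|^{-4}\,dx$ near the hole --- one from the coefficient $H_w(\mathbf{K}g)(\tilde w)$ and one from the $L^2$-norm of $H_w K(\cdot,\tilde w)$ --- so that the net order is exactly $4-1-1=2$. Keeping this bookkeeping straight is what guarantees that the Hessian contribution is no worse than the (logarithmically larger) gradient contribution; everything else is a routine combination of Cauchy--Schwarz with the local singularity bounds for $K$ and the annular confinement of $\Omega_{E_\varepsilon}$.
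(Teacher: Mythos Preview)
Your proof is correct and follows essentially the same route as the paper's: write out $(\mathbf{\tilde H}_\varepsilon-\mathbf K)f$ from the definition of $\tilde h_\varepsilon$, separate the gradient and Hessian contributions, and bound each factor via the pointwise Green-function estimates together with the annular polar-coordinate integrals. The paper sums over the coordinate indices (Minkowski) where you apply Cauchy--Schwarz to the inner products, but the four resulting scalar bounds and the final bookkeeping $\varepsilon^2\cdot|\log\varepsilon|^{1/2}\cdot|\log\varepsilon|^{1/2}+\varepsilon^4\cdot\varepsilon^{-1}\cdot\varepsilon^{-1}\le C\varepsilon^2|\log\varepsilon|$ are identical.
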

\begin{proof}
We consider an arbitrary function $v \in L^p(\Omega)$. From the expression
\begin{align*}
((\mathbf{\tilde{H}}_\varepsilon- \mathbf{K})v)(x) = &h(\varepsilon)\langle \nabla_w K(x,w), \nabla_w (\mathbf{K}\xi_\varepsilon v)(\tilde{w}) \rangle  \chi_\varepsilon(x) \\
+&i(\varepsilon) \langle H_w K(x,w), H_w (\mathbf{K}\xi_\varepsilon v) (\tilde{w}) \rangle  \chi_\varepsilon(x), 
\end{align*}
we can estimate the norm as follows:
\begin{multline}\label{eq4.34}
    \| (\mathbf{\tilde{H}}_\varepsilon -\mathbf{K})v\|_{L^p(\Omega)}  \le \lvert h(\varepsilon)\rvert \sum_{n=1}^2 \left( \int_{\Omega_{E_\varepsilon}} \left \lvert  \frac{\partial}{\partial w_n} K(x,\tilde{w})\right\rvert^p \,dx\right)^{1/p} \left\lvert \frac{\partial}{\partial w_n} (\mathbf{K} \chi_\varepsilon v)(\tilde{w})\right\rvert \\
    +\lvert i(\varepsilon)\rvert \sum_{m,n=1}^2 \left( \int_{\Omega_{E_\varepsilon}} \left \lvert  \frac{\partial^2}{\partial w_m\partial w_n} K(x,\tilde{w})\right\rvert^p \,dx\right)^{1/p} \left\lvert \frac{\partial^2}{\partial w_m \partial w_n} (\mathbf{K} \chi_\varepsilon v)(\tilde{w})\right\rvert
\end{multline}
valid for $p<1$.

We use the following inequalities to estimate the terms in \eqref{eq4.33}. First, for $n=1,2$, we have
\begin{equation}\label{eq4.35}
\begin{aligned}
   \left( \int_{\Omega_{E_\varepsilon}} \left \lvert \frac{\partial}{\partial w_n} K(x,\tilde{w})\right\rvert^p \,dx\right)^{1/p} &\le C \left( \int_{\Omega_{E_\varepsilon}} \lvert x-\tilde{w}\rvert^{-p} ,dx\right)^{1/p} \\
&\le 
\begin{cases}
C \lvert \log \varepsilon\rvert^{1/2}  &\text{ if } p=2, \\
C \varepsilon^{2/p-1}  &\text{ if } p>2.
\end{cases}
    \end{aligned}
\end{equation}
Next, for $1 \le m,n \le 2$, we have
\begin{equation}\label{eq4.36}
\begin{aligned}
     \left( \int_{\Omega_{E_\varepsilon}} \left \lvert  \frac{\partial^2}{\partial w_m\partial w_n} K(x,\tilde{w})\right\rvert^p \,dx\right)^{1/p}  &\le C \left( \int_{\Omega_{E_\varepsilon}} \lvert x-\tilde{w}\rvert^{-2p} \,dx\rvert\right)^{1/p} \\
    &\le C \varepsilon^{2/p-2} \qquad\text{ if } p>1.
\end{aligned}
\end{equation}
By \eqref{eq4.34}, \eqref{eq4.35}, \eqref{eq4.36} and using the estimation \eqref{eq4.13}, \eqref{eq4.14} with $\tilde{f}= \chi_\varepsilon v$,  we can conclude that
\begin{align*}
     \| (\mathbf{\tilde{H}}_\varepsilon -\mathbf{K})v\|_{L^2(\Omega)} &\le C( \lvert h(\varepsilon)\rvert \lvert \log \varepsilon\rvert^{1/2} \lvert \log \varepsilon\rvert^{1/2} \|v\|_{L^2(\Omega)} \\
     &\quad + \lvert i(\varepsilon)\rvert \varepsilon^{-1} \varepsilon^{-1}\|v\|_{L^2(\Omega)} )\\
     &\le C \varepsilon^2 \lvert \log \varepsilon\rvert \|v\|_{L^2(\Omega)},
\end{align*}
which holds for an arbitrary $v\in L^2(\Omega)$. Therefore, we have completed the proof of the lemma.
\end{proof}
\begin{remark}
     The $i$-th eigenvalue of $\mathbf{H}_\varepsilon$ corresponds exactly to the $i$-th eigenvalue of $\chi_\varepsilon \mathbf{\tilde{H}}_\varepsilon \chi_\varepsilon$.
\end{remark}
 By considering Proposition \ref{proposition4.4}, Lemma \ref{lemma4.5}, and Lemma \ref{lemma4.6}, we can deduce the existence of a constant $C$ that is independent of the specific value of $i$ such that
\begin{align}\label{eq4.37}
\lvert \mu_i(\varepsilon)^{-1}- \mu_i^{-1}\rvert \le C (\varepsilon^{1+s}+ \varepsilon + \varepsilon^2 \lvert \log \varepsilon \rvert) \le C \varepsilon
\end{align}
holds.

To establish Theorem \ref{theo2.1}, we require a more precise estimate for the left-hand side of inequality \eqref{eq4.37}. Utilizing the information from \eqref{eq4.37}, we can conclude that when the multiplicity of $\mu_i$ is one, the multiplicity of $\mu_i(\varepsilon)$ is also one for sufficiently small $\varepsilon$. 
\subsection{Perturbation Analysis of \texorpdfstring{$\mathbf{\overline{H}_\varepsilon}$}{Lg}}
In this subsection, we investigate the behaviour of eigenvalues of $\mathbf{\overline{H}}_\varepsilon$ as $\varepsilon$ approaches zero. We introduce the operators $A_0 = \mathbf{K}$ and define the operators $A_1$, $A_2$, and $A_3$ as follows:
\begin{align*}
(A_1f)(x) &= K(x,\tilde{w}) (\mathbf{K}f)(\tilde{w}),\\
(A_2f)(x) &= \langle \nabla_w K(x,w), \nabla_w (\mathbf{K}\xi_\varepsilon f)(\tilde{w}) \rangle  \xi_\varepsilon(x)\\
(A_3f)(x) &= \langle H_w K(x,w), H_w (\mathbf{K}\xi_\varepsilon f) (\tilde{w}) \rangle  \xi_\varepsilon(x).
\end{align*}
We express $\mathbf{\overline{H}_\varepsilon}$ as the sum $\mathbf{\overline{H}_\varepsilon} = A_0 + \bar{g}(\varepsilon)A_1+ h(\varepsilon) A_2 + i(\varepsilon) A_3$,
where 
\begin{eqnarray} \label{eq4.38}
    \bar{g}(\varepsilon)= -\pi \mu_i (M\varepsilon)^{2}.
\end{eqnarray}
Moreover, we introduce the quantities $\lambda(\varepsilon)$ and $\psi(\varepsilon)$, which represent approximate eigenvalues and approximate eigenfunctions of $\mathbf{\overline{H}}_\varepsilon$. They are given by:
\begin{eqnarray*}
\lambda(\varepsilon) &=& \lambda_0 + \bar{g}(\varepsilon) \lambda_1+ h(\varepsilon) \lambda_2 + i(\varepsilon) \lambda_3\\
\psi(\varepsilon) &=& \psi_0 + \bar{g}(\varepsilon) \psi_1+ h(\varepsilon) \psi_2 + i(\varepsilon) \psi_3.
\end{eqnarray*}
Let $\lambda_0$ be a simple eigenvalue of $A_0$. We begin by defining $\psi_0$ as the solution to the equation:
\begin{eqnarray} \label{eq4.39}
(A_0-\lambda_0)\psi_0 = 0, \quad \|\psi_0\|_{L^2(\Omega)} =1.
\end{eqnarray}
Subsequently, we proceed to solve the following equations:
\begin{eqnarray}
(A_0-\lambda_0)\psi_1 =(\lambda_1-A_1)\psi_0, \qquad \langle \psi_0, \psi_1 \rangle_{L^2(\Omega)}=0 \label{eq4.40}\\
(A_0-\lambda_0)\psi_2 =(\lambda_2-A_2)\psi_0, \qquad \langle \psi_0, \psi_2 \rangle_{L^2(\Omega)}=0  \label{eq4.41} \\
(A_0-\lambda_0)\psi_3 =(\lambda_3-A_3)\psi_0, \qquad \langle \psi_0, \psi_3 \rangle_{L^2(\Omega)}=0.  \label{eq4.42}
\end{eqnarray} 
According to the Fredholm alternative theory, the unique solution $\psi_1$, $\psi_2$, $\psi_3$ of equations \eqref{eq4.40}, \eqref{eq4.41}, and \eqref{eq4.42} exists if and only if the following condition holds:
\begin{eqnarray} \label{eq4.43}
\lambda_n= \langle A_n \psi_0, \psi_0 \rangle_{L^2(\Omega)} \qquad (n=1,2,3)
\end{eqnarray}
It is worth noting that $\lambda_0$ is chosen as $\lambda_0= \mu_i^{-1}$, and therefore $\psi_0= \varphi_i$.

\begin{lemma}\label{lemma4.7}
The operator norms of $A_1$, $A_2$, and $A_3$ satisfy the following inequalities:
\begin{enumerate}
    \item For $p > 1$, $\|A_1\|_{L^p(\Omega)} \le C$.
    \item For $p = 2$, $\|A_2\|_{L^p(\Omega)} \le C \lvert \log \varepsilon \rvert$.\\
For $p > 2$, $\|A_2\|_{L^p(\Omega)} \le C\varepsilon^{2/p-1}$.
\item  For $p > 1$, $\|A_3\|_{L^p(\Omega)} \le C\varepsilon^{-2}$.
\end{enumerate}
Here, the constant $C$ is independent of $\varepsilon$.
\end{lemma}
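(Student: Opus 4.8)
The plan is to exploit that each of these operators is of rank‑one type in a suitable sense: for $j=1,2,3$ we can write $(A_jf)(x)=\Phi_j(x)\,\ell_j(f)$, where $\Phi_j$ depends only on $x$ and $f\mapsto\ell_j(f)$ is a linear functional, so that
\[\|A_jf\|_{L^p(\Omega)}=\|\Phi_j\|_{L^p(\Omega)}\,|\ell_j(f)|,\]
and it suffices to bound $\|\Phi_j\|_{L^p(\Omega)}$ and $|\ell_j(f)|\le C_j(\varepsilon)\|f\|_{L^p(\Omega)}$ separately. Both factors will be controlled by the pointwise Green estimates \eqref{eq4.6}, \eqref{eq4.7}, the additional bound $|\nabla_x^2K(x,y)|\le C|x-y|^{-2}$ (which follows from $K(x,y)=-(2\pi)^{-1}\log|x-y|+S(x,y)$ with $S\in C^\infty(\Omega\times\Omega)$), and Hölder's inequality. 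The one geometric input is that the cut‑off $\xi_\varepsilon$ stays away from $\tilde w$: since $E$ is star‑shaped with respect to a neighbourhood of $\tilde w=0$, it contains a ball $\mathbb{B}(0,r_0)$, hence $E_{\varepsilon/2}\supset\mathbb{B}(0,r_0\varepsilon/2)$ and $E_{\varepsilon/2}\subset E_\varepsilon$; therefore $\xi_\varepsilon$ vanishes on a neighbourhood of $\tilde w$ and the functions $\xi_\varepsilon\,\nabla_wK(\cdot,\tilde w)$, $\xi_\varepsilon\,H_wK(\cdot,\tilde w)$ are supported in the annulus $\{r_0\varepsilon/2<|x|<R\}$, $R=\sup_{x\in\partial\Omega}|x|$.

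For part $(1)$ I would take $\Phi_1(x)=K(x,\tilde w)$ and $\ell_1(f)=(\mathbf{K}f)(\tilde w)$. Then $\|\Phi_1\|_{L^p(\Omega)}\le C$ for every finite $p$, because $K(\cdot,\tilde w)$ has at most a logarithmic singularity and $\Omega$ is bounded, while $|\ell_1(f)|\le C\|f\|_{L^p(\Omega)}$ for $p>1$ by \eqref{eq4.8}; multiplying gives $\|A_1\|_{L^p(\Omega)}\le C$.

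For parts $(2)$ and $(3)$, unravelling the definitions identifies
\[(A_2f)(x)=\sum_{n=1}^2\frac{\partial K}{\partial w_n}(x,\tilde w)\,\xi_\varepsilon(x)\;\frac{\partial(\mathbf{K}\xi_\varepsilon f)}{\partial w_n}(\tilde w),\]
\[(A_3f)(x)=\sum_{m,n=1}^2\frac{\partial^2 K}{\partial w_m\partial w_n}(x,\tilde w)\,\xi_\varepsilon(x)\;\frac{\partial^2(\mathbf{K}\xi_\varepsilon f)}{\partial w_m\partial w_n}(\tilde w).\]
For $A_2$, estimate \eqref{eq4.7} gives $\|\xi_\varepsilon\,\partial_{w_n}K(\cdot,\tilde w)\|_{L^p(\Omega)}^p\le C\int_{r_0\varepsilon/2}^{R}r^{1-p}\,dr$, which is $\le C|\log\varepsilon|$ for $p=2$ and $\le C\varepsilon^{2-p}$ for $p>2$, exactly as in \eqref{eq4.35}; Hölder's inequality together with $|\partial_{w_n}K(\tilde w,\cdot)|\le C|\,\cdot-\tilde w|^{-1}$ and the support of $\xi_\varepsilon f$ gives $|\partial_{w_n}(\mathbf{K}\xi_\varepsilon f)(\tilde w)|\le C\big(\int_{r_0\varepsilon/2}^{R}r^{1-p'}\,dr\big)^{1/p'}\|f\|_{L^p}$, which is $\le C|\log\varepsilon|^{1/2}\|f\|_{L^2}$ for $p=2$ and $\le C\|f\|_{L^p}$ for $p>2$, as in \eqref{eq4.13}. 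Taking the product yields $\|A_2\|_{L^2(\Omega)}\le C|\log\varepsilon|$ and $\|A_2\|_{L^p(\Omega)}\le C\varepsilon^{2/p-1}$ for $p>2$. For $A_3$ the identical computation with $|\nabla_x^2K(x,y)|\le C|x-y|^{-2}$ produces $\|\xi_\varepsilon\,\partial^2_{w_mw_n}K(\cdot,\tilde w)\|_{L^p(\Omega)}\le C\varepsilon^{2/p-2}$ (as in \eqref{eq4.36}) and $|\partial^2_{w_mw_n}(\mathbf{K}\xi_\varepsilon f)(\tilde w)|\le C\varepsilon^{-2/p}\|f\|_{L^p}$ (as in \eqref{eq4.14}); their product is $C\varepsilon^{-2}\|f\|_{L^p}$ for every $p\in(1,\infty)$, which is $(3)$.

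There is no serious obstacle: the lemma is really an $L^p$ operator‑norm repackaging of the pointwise inequalities \eqref{eq4.13}--\eqref{eq4.16} already proved in Section 4.2, and the only work is bookkeeping of exponents in the radial integrals near $\varepsilon$. The single point that must not be glossed over is that the lower cut‑off radius in those integrals is genuinely of order $\varepsilon$ (not smaller) — this is precisely what the star‑shapedness of $E$ and the construction of $\xi_\varepsilon$ guarantee, and it is what makes the negative powers of $\varepsilon$ in $(2)$ and $(3)$ come out with exactly the stated exponents.
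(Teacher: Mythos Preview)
Your proposal is correct and follows essentially the same approach as the paper: both write each $A_j$ as a sum of rank-one pieces $\Phi(x)\,\ell(f)$ and bound the two factors separately via the pointwise Green-function estimates, the support property of $\xi_\varepsilon$, and H\"older's inequality, arriving at the same radial integrals and exponents (your references to \eqref{eq4.13}, \eqref{eq4.14}, \eqref{eq4.35}, \eqref{eq4.36} are exactly the ingredients the paper uses). Your treatment of the $p=2$ case for $A_2$ is in fact slightly more explicit than the paper's, correctly getting a factor $|\log\varepsilon|^{1/2}$ from each side before multiplying.
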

\begin{proof}
	Let $f\in L^p(\Omega)$ be arbitrary. We have:
    \begin{eqnarray*}
    \|A_1\|_{L^p(\Omega)} &\le& C \| G(.,w)\|_{L^p(\Omega)} \|\mathbf{K}f\|_{L^\infty(\Omega)} \\
              &\le& C \|f\|_{L^p(\Omega)} \qquad (p>1).
    \end{eqnarray*}
	Applying Minkowski inequality, we obtain
	\begin{align}\label{eq4.44}
	    &\|A_2f\|_{L^p(\Omega)} =  \left( \int_{\Omega \backslash E_{\eps/2}} \bigg\lvert \sum_{n=1}^{2}   \frac{\partial}{\partial w_n} K(x,\tilde{w})\frac{\partial}{\partial w_n}(\mathbf{K} \xi_\varepsilon f)(\tilde{w}) \bigg\rvert^p dx \right)^{1/p}\nonumber\\
		 &\le   \sum_{n=1}^{2}\left( \int_{\Omega \backslash E_{\varepsilon/2}}  \left\lvert \frac{\partial}{\partial w_n} K(x, \tilde{w}) \right\rvert^p \left\lvert \frac{\partial}{\partial w_n} (\mathbf{K}\xi_\varepsilon f)(\tilde{w})\right\rvert^p dx\right) ^{1/p} \nonumber\\
		&\le   \sum_{n=1}^{2}\left( \int_{\Omega \backslash E_{\varepsilon/2}} \left\lvert \frac{\partial}{\partial w_n} K(x,\tilde{w})\right\rvert^p  dx\right) ^{1/p} \left\lvert \frac{\partial}{\partial w_n} (\mathbf{K}\xi_\varepsilon f)(\tilde{w})\right\rvert.
	\end{align}
	Furthermore, we have
	\begin{equation}\label{eq4.45}
	    \begin{aligned}
	        \left( \int_{\Omega \backslash E_{\varepsilon/2}} \left\lvert \frac{\partial}{\partial w_n} K(x,\tilde{w}) \right\rvert^p  dx\right)^{1/p} &\le  C \left( \int_{\Omega \backslash E_{\varepsilon/2}}  \lvert x-\tilde{w}\rvert^{-p} dx \right) ^{1/p}  \\
			&\le
   \begin{cases}
       C \, \lvert \log \varepsilon \rvert  \qquad\text{if } p=2, \\ 
        C \varepsilon^{2/p -1} \qquad\text{if } p>2 , 
   \end{cases}
	    \end{aligned}
	\end{equation}
	for $n=1,2$. Using H\"older inequality, we have the estimate
	\begin{equation}\label{eq4.46}
	    \begin{aligned}
	      \left\lvert\frac{\partial}{\partial w_n}(\mathbf{K}\xi_\varepsilon f)(\tilde{w}) \right\rvert &\le C \left( \int_{\Omega} \lvert y-\tilde{w}\rvert^{-q}dy \right)^{1/q} \|f\|_{L^p(\Omega)} \\
	&\le  C \|f\|_{L^p(\Omega)} \text{  if } p > 2,  
	    \end{aligned}
	\end{equation} 
  where $q$ satisfies $(1/p) + (1/q)=1$.
  
\noindent By combining equations \eqref{eq4.44}, \eqref{eq4.45}, and \eqref{eq4.46}, we obtain:
	\begin{eqnarray*}
	 \|A_2\|_{L^p(\Omega)}  &\le& \begin{cases}
	     C \lvert \log \varepsilon \rvert  \qquad \text{if }p=2,\\
	 C \varepsilon^{2/p -1} \qquad \text{if } p>2.
	 \end{cases}
	\end{eqnarray*}
Similarly, for $A_3$, we have:
 \begin{eqnarray*}
     	\|A_3 f\|_{L^p(\Omega)} &\le&  \sum_{m,n=1}^{2}\left(\int_{\Omega \backslash E_{\varepsilon/2}}  \left\lvert \frac{\partial^2}{\partial w_m \partial w_n} K(x, \tilde{w}) \right\rvert^p  dx\right) ^{1/p} \left\lvert \frac{\partial^2}{\partial w_m \partial w_n} (\mathbf{K}\xi_\varepsilon f)(\tilde{w})\right\rvert \\
       &\le& C \varepsilon^{-2} \|f\|_{L^p(\Omega)} \qquad \text{if }p>1,
 \end{eqnarray*}
we complete the proof of the lemma.
\end{proof}
 \begin{lemma}\label{lemma4.8}
     For a constant $C$ independent of $\varepsilon$, the following inequalities hold:
\begin{enumerate}
    \item If $p > 1$, then $\|\psi_1\|_{L^p(\Omega)} \le C$.
\item If $p = 2$, then $\|\psi_2\|_{L^p(\Omega)} \le C \lvert\log \varepsilon \rvert$.\\
If $p > 2$, then $\|\psi_2\|_{L^p(\Omega)} \le C \varepsilon^{2/p - 1}$.
\item If $p > 1$, then $\|\psi_3\|_{L^p(\Omega)} \le C \varepsilon^{-2}$.
\end{enumerate}
 \end{lemma}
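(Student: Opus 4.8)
The plan is to realise $\psi_1,\psi_2,\psi_3$ through the reduced resolvent of $A_0=\mathbf{K}$ at the simple eigenvalue $\lambda_0=\mu_i^{-1}$, to bound them first in $L^2(\Omega)$, and then to bootstrap to $L^p(\Omega)$ with $p>2$ by exploiting the smoothing of $\mathbf{K}$. Recall that $\mathbf{K}$ is a compact self-adjoint operator on $L^2(\Omega)$ and that $\lambda_0=\mu_i^{-1}$ is a simple, hence isolated, nonzero eigenvalue with eigenfunction $\psi_0=\varphi_i$ and $\|\psi_0\|_{L^2(\Omega)}=1$. Consequently $A_0-\lambda_0$ maps $\{\psi_0\}^{\perp}$ bijectively onto $\{\psi_0\}^{\perp}$ (the latter being its range, by self-adjointness), with bounded inverse $R_0$ on $L^2(\Omega)$. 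For $n\in\{1,2,3\}$ the right-hand side $F_n:=(\lambda_n-A_n)\psi_0$ of \eqref{eq4.40}--\eqref{eq4.42} is orthogonal to $\psi_0$, precisely because of the Fredholm condition $\lambda_n=\langle A_n\psi_0,\psi_0\rangle_{L^2(\Omega)}$ from \eqref{eq4.43}; hence $\psi_n=R_0F_n$ and
\[
\|\psi_n\|_{L^2(\Omega)}\le \|R_0\|\,\|F_n\|_{L^2(\Omega)}\le C\bigl(|\lambda_n|+\|A_n\psi_0\|_{L^2(\Omega)}\bigr)\le C\|A_n\|_{L^2(\Omega)},
\]
where $|\lambda_n|\le\|A_n\psi_0\|_{L^2(\Omega)}\|\psi_0\|_{L^2(\Omega)}\le\|A_n\|_{L^2(\Omega)}$. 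Inserting the bounds of Lemma \ref{lemma4.7} gives $\|\psi_1\|_{L^2(\Omega)}\le C$, $\|\psi_2\|_{L^2(\Omega)}\le C|\log\varepsilon|$ and $\|\psi_3\|_{L^2(\Omega)}\le C\varepsilon^{-2}$. This already yields the case $p=2$ of item (2); since $\Omega$ is bounded, H\"older's inequality then gives the cases $1<p<2$ of items (1) and (3).

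For $p>2$ I would bootstrap as follows. Rewrite \eqref{eq4.40}--\eqref{eq4.42} as $\psi_n=\lambda_0^{-1}(A_0\psi_n-F_n)=\lambda_0^{-1}(\mathbf{K}\psi_n-\lambda_n\psi_0+A_n\psi_0)$. By \eqref{eq4.8} with exponent $2$, the operator $\mathbf{K}$ maps $L^2(\Omega)$ continuously into $L^\infty(\Omega)\hookrightarrow L^p(\Omega)$, so $\|\mathbf{K}\psi_n\|_{L^p(\Omega)}\le C\|\psi_n\|_{L^2(\Omega)}$; moreover $\|\psi_0\|_{L^p(\Omega)}\le C$ because $\varphi_i$ is smooth, and $\|A_n\psi_0\|_{L^p(\Omega)}\le\|A_n\|_{L^p(\Omega)}\|\psi_0\|_{L^p(\Omega)}\le C\|A_n\|_{L^p(\Omega)}$. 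Hence
\[
\|\psi_n\|_{L^p(\Omega)}\le C\bigl(\|\psi_n\|_{L^2(\Omega)}+|\lambda_n|+\|A_n\|_{L^p(\Omega)}\bigr)\le C\bigl(\|A_n\|_{L^2(\Omega)}+\|A_n\|_{L^p(\Omega)}\bigr).
\]
Applying Lemma \ref{lemma4.7}: for $n=1$ this is $\le C$; for $n=2$ and $p>2$ it is $\le C(|\log\varepsilon|+\varepsilon^{2/p-1})\le C\varepsilon^{2/p-1}$, since $\varepsilon^{2/p-1}$ dominates $|\log\varepsilon|$ as $\varepsilon\to0$; and for $n=3$ and $p>1$ it is $\le C\varepsilon^{-2}$. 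Combining the two steps establishes all three items. One can also, if desired, estimate $\|A_n\psi_0\|_{L^p(\Omega)}$ directly (using \eqref{eq4.13}, \eqref{eq4.14} with $\tilde f=\xi_\varepsilon\varphi_i$ together with the explicit form of $A_1\varphi_i=\mu_i^{-1}\varphi_i(\tilde w)K(\cdot,\tilde w)$), which gives the same rates.

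The only non-routine ingredient is the uniform-in-$\varepsilon$ $L^2$-boundedness of the reduced resolvent $R_0$, and this is exactly where the hypothesis that $\mu_i$ is simple is used: simplicity makes $\lambda_0=\mu_i^{-1}$ an isolated eigenvalue of the compact operator $\mathbf{K}$, so $A_0-\lambda_0$ is boundedly invertible on the codimension-one space $\{\psi_0\}^{\perp}$ with a constant not depending on $\varepsilon$ (the operator $A_0$ is itself $\varepsilon$-independent). Everything else is bookkeeping with the kernel estimates of Lemma \ref{lemma4.7} and the pointwise bound \eqref{eq4.8}.
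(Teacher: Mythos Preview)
Your proposal is correct and follows the same strategy as the paper: bound $\psi_n$ in $L^2$ via Fredholm theory (the reduced resolvent of $A_0$ at the simple eigenvalue $\lambda_0$) combined with Lemma~\ref{lemma4.7}, then pass to $L^p$. The paper disposes of the $L^p$ step with the single phrase ``by similar arguments'', whereas you spell out an explicit bootstrap through $\psi_n=\lambda_0^{-1}(\mathbf{K}\psi_n-\lambda_n\psi_0+A_n\psi_0)$ and the smoothing estimate~\eqref{eq4.8}; this is a clean and legitimate way to fill in what the paper leaves implicit.
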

	
\begin{proof}
We begin by observing the following inequalities:
\begin{eqnarray*}
		\|(\lambda_1 - A_1 )\psi_0\|_{L^2(\Omega)} &\le& C \|A_1\|_{L^2(\Omega)} \le C',\\
		\|(\lambda_2 - A_2 )\psi_0\|_{L^2(\Omega)} &\le& C \|A_2\|_{L^2(\Omega)} \le C \lvert \log \varepsilon\rvert, \\
        \|(\lambda_3 - A_3 )\psi_0\|_{L^2(\Omega)} &\le& C \|A_3\|_{L^2(\Omega)} \le C \varepsilon^{-2}.
	\end{eqnarray*}
	By the Fredholm theory, we obtain the following $L^2(\Omega)$-norm bounds for $\psi_1$ and $\psi_2$:
	\begin{eqnarray*}
	    \|\psi_1\|_{L^2(\Omega)} &\le& C \| \lambda_1- A_1\|_{L^2(\Omega)}  \|\psi_0\|_{L^2(\Omega)} \le C' \\
		\|\psi_2\|_{L^2(\Omega)} &\le& C \| \lambda_2- A_2\|_{L^2(\Omega)}  \|\psi_0 \|_{L^2(\Omega)} \le C \lvert \log \varepsilon\rvert\\
        \|\psi_3\|_{L^2(\Omega)} &\le& C \| \lambda_3- A_3\|_{L^2(\Omega)}  \|\psi_0 \|_{L^2(\Omega)} \le C \varepsilon^{-2}.
	\end{eqnarray*}
	 By similar arguments, we can obtain $L^p$ estimates for $\psi_1$, $\psi_2$, and $\psi_3$.
\end{proof}

\begin{proposition}\label{proposition4.9}
	There exists a constant $C$ independent of $\varepsilon$ such that
	\begin{eqnarray}
		\|(\mathbf{\overline{H}}_\varepsilon- \lambda(\varepsilon)) \psi(\varepsilon)\|_{L^2(\Omega)} \le C \varepsilon^4 \lvert \log \varepsilon\rvert^2\label{eq4.47}
	\end{eqnarray}
	holds.
\end{proposition}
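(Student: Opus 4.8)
The plan is to expand the residual $(\mathbf{\overline{H}}_\varepsilon-\lambda(\varepsilon))\psi(\varepsilon)$ by multilinearity in the three small coefficients $\bar{g}(\varepsilon)$, $h(\varepsilon)$, $i(\varepsilon)$, and to exhibit the cancellations that the defining relations \eqref{eq4.39}--\eqref{eq4.42} were designed to produce. Writing $B:=A_0-\lambda_0$ and grouping the coefficient of each power of $\varepsilon$, one has
\begin{align*}
\mathbf{\overline{H}}_\varepsilon-\lambda(\varepsilon)=B+\bar{g}(\varepsilon)(A_1-\lambda_1)+h(\varepsilon)(A_2-\lambda_2)+i(\varepsilon)(A_3-\lambda_3).
\end{align*}
Applying this operator to $\psi(\varepsilon)=\psi_0+\bar{g}(\varepsilon)\psi_1+h(\varepsilon)\psi_2+i(\varepsilon)\psi_3$ and sorting by total degree in $(\bar{g},h,i)$, the degree-zero part is $B\psi_0=0$ by \eqref{eq4.39}, while the three ``diagonal'' degree-one parts $\bar{g}[B\psi_1+(A_1-\lambda_1)\psi_0]$, $h[B\psi_2+(A_2-\lambda_2)\psi_0]$ and $i[B\psi_3+(A_3-\lambda_3)\psi_0]$ vanish by \eqref{eq4.40}, \eqref{eq4.41} and \eqref{eq4.42} respectively. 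What remains is exactly the sum of the six second-order cross terms
\begin{align*}
&\bar{g}^2(A_1-\lambda_1)\psi_1+h^2(A_2-\lambda_2)\psi_2+i^2(A_3-\lambda_3)\psi_3\\
&\quad+\bar{g}h\big[(A_1-\lambda_1)\psi_2+(A_2-\lambda_2)\psi_1\big]+\bar{g}i\big[(A_1-\lambda_1)\psi_3+(A_3-\lambda_3)\psi_1\big]\\
&\quad+hi\big[(A_2-\lambda_2)\psi_3+(A_3-\lambda_3)\psi_2\big].
\end{align*}

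Next I would estimate these six terms in $L^2(\Omega)$. Recall $\bar{g}(\varepsilon),h(\varepsilon)=\mathcal{O}(\varepsilon^2)$ and $i(\varepsilon)=\mathcal{O}(\varepsilon^4)$. Since $\lambda_n=\langle A_n\psi_0,\psi_0\rangle_{L^2(\Omega)}$ by \eqref{eq4.43}, Lemma \ref{lemma4.7} gives $|\lambda_1|\le C$, $|\lambda_2|\le C|\log\varepsilon|$ and $|\lambda_3|\le C\varepsilon^{-2}$, so each operator $A_n-\lambda_n$ inherits the same $L^2$ operator-norm bound as $A_n$, namely $C$, $C|\log\varepsilon|$ and $C\varepsilon^{-2}$ for $n=1,2,3$. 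Combining this with the $L^2$ bounds $\|\psi_1\|\le C$, $\|\psi_2\|\le C|\log\varepsilon|$ and $\|\psi_3\|\le C\varepsilon^{-2}$ from Lemma \ref{lemma4.8}, the six terms are dominated respectively by $C\varepsilon^4$, $C\varepsilon^4|\log\varepsilon|^2$, $C\varepsilon^8\varepsilon^{-4}=C\varepsilon^4$, $C\varepsilon^4|\log\varepsilon|$, $C\varepsilon^6\varepsilon^{-2}=C\varepsilon^4$ and $C\varepsilon^6\varepsilon^{-2}|\log\varepsilon|=C\varepsilon^4|\log\varepsilon|$. The $h(\varepsilon)^2$ term therefore dominates, and summing yields the claimed bound $C\varepsilon^4|\log\varepsilon|^2$.

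The only genuine subtlety lies in the first step: the cancellation of all degree-one terms relies on $\psi_1,\psi_2,\psi_3$ being precisely the Fredholm solutions of \eqref{eq4.40}--\eqref{eq4.42} with the orthogonality normalization and on $\lambda_n$ being chosen exactly by \eqref{eq4.43}; once this bookkeeping is in place the remainder is just a careful count of powers of $\varepsilon$ and $|\log\varepsilon|$ via Lemmas \ref{lemma4.7} and \ref{lemma4.8}. I would also record along the way that $\psi(\varepsilon)$ itself stays bounded, with $\|\psi(\varepsilon)\|_{L^2(\Omega)}=1+\mathcal{O}(\varepsilon^2|\log\varepsilon|)$ by the same two lemmas, since this normalisation fact is what makes \eqref{eq4.47} usable in combination with \eqref{eq4.37} in the proof of Theorem \ref{theo2.1}, although it is not needed for \eqref{eq4.47} itself.
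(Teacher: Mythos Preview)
Your proof is correct and follows essentially the same approach as the paper: expand $(\mathbf{\overline{H}}_\varepsilon-\lambda(\varepsilon))\psi(\varepsilon)$ multilinearly, use \eqref{eq4.39}--\eqref{eq4.42} to kill the zeroth- and first-order terms, and bound the six remaining quadratic cross terms via Lemmas~\ref{lemma4.7} and~\ref{lemma4.8}. Your term-by-term bookkeeping of the powers of $\varepsilon$ and $|\log\varepsilon|$ is in fact more explicit than the paper's, which simply asserts the final bound after displaying the analogue of your six-term residual.
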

\begin{proof}
    By \eqref{eq4.39}, \eqref{eq4.40}, \eqref{eq4.41} and \eqref{eq4.42}, we have
\begin{align}\label{eq4.48}
    (\mathbf{\overline{H}}_\varepsilon - \lambda(\varepsilon))\psi(\varepsilon) &= \bar{g}(\varepsilon)^2(A_1 - \lambda_1) \psi_1 +h(\varepsilon)^2(A_2-\lambda_2)\psi_2 + i(\varepsilon)^2(A_3-\lambda_3)\psi_2 \nonumber\\
    &+ \bar{g}(\varepsilon)h(\varepsilon)((A_1-\lambda_1)\psi_2+(A_2-\lambda_2)\psi_1) \nonumber\\
     &+ h(\varepsilon)i(\varepsilon)((A_2-\lambda_2)\psi_3+(A_3-\lambda_3)\psi_2) \nonumber\\
      &+ i(\varepsilon)\bar{g}(\varepsilon)((A_3-\lambda_3)\psi_1+(A_1-\lambda_1)\psi_3).
\end{align}
By using Lemma \ref{lemma4.7} and Lemma \ref{lemma4.8}, we can estimate each term in \eqref{eq4.48}. Therefore, we obtain
\begin{equation*}
\|(\mathbf{\overline{H}}\varepsilon - \lambda(\varepsilon))\psi(\varepsilon)\|_{L^2(\Omega)} \le C \varepsilon^4 \lvert \log \varepsilon \rvert^2,
\end{equation*}
which completes the proof of Proposition \ref{proposition4.9}.
\end{proof}

In the following proposition, our goal is to establish an estimate for $\|(\mathbf{K_\varepsilon}-\mathbf{H_\varepsilon})( \chi_\varepsilon\psi(\varepsilon))\|_{L^2(\Omega_{E_\varepsilon})}$.
\begin{proposition} \label{proposition4.10}There exists a constant $C$ independent of $\varepsilon$ such that
\[ \|(\mathbf{K_\varepsilon}-\mathbf{H_\varepsilon})( \chi_\varepsilon\psi(\varepsilon))\|_{L^2(\Omega_{E_\varepsilon})} \le C\varepsilon^3 \lvert \log \varepsilon \rvert\]
  holds.  
\end{proposition}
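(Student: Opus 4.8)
The plan is to exploit the linear structure of the approximate eigenfunction constructed above. Since $\psi(\varepsilon)=\psi_0+\bar g(\varepsilon)\psi_1+h(\varepsilon)\psi_2+i(\varepsilon)\psi_3$ with $\psi_0=\varphi_i$, multiplying by $\chi_\varepsilon$ and applying $\mathbf{K}_\varepsilon-\mathbf{H}_\varepsilon$ gives
\begin{align*}
(\mathbf{K}_\varepsilon-\mathbf{H}_\varepsilon)(\chi_\varepsilon\psi(\varepsilon)) &=(\mathbf{K}_\varepsilon-\mathbf{H}_\varepsilon)(\chi_\varepsilon\varphi_i)+\bar g(\varepsilon)(\mathbf{K}_\varepsilon-\mathbf{H}_\varepsilon)(\chi_\varepsilon\psi_1)\\
&\quad+h(\varepsilon)(\mathbf{K}_\varepsilon-\mathbf{H}_\varepsilon)(\chi_\varepsilon\psi_2)+i(\varepsilon)(\mathbf{K}_\varepsilon-\mathbf{H}_\varepsilon)(\chi_\varepsilon\psi_3).
\end{align*}
I would estimate the four summands separately in $L^2(\Omega_{E_\varepsilon})$ and then add the bounds.

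The first summand is exactly the quantity controlled by Theorem \ref{theo4.1}, so it is $\mathcal{O}(\varepsilon^3|\log\varepsilon|^2)$, and this will be the dominant contribution. For the three correction terms I would combine the operator-norm estimate of Proposition \ref{proposition4.4} in $L^2(\Omega_{E_\varepsilon})$ — fixing some $s\in(0,1)$ gives $\|\mathbf{K}_\varepsilon-\mathbf{H}_\varepsilon\|_{L^2(\Omega_{E_\varepsilon})}=\mathcal{O}(\varepsilon^{1+s})$ — with the $L^2(\Omega)$ bounds of Lemma \ref{lemma4.8}, namely $\|\psi_1\|_{L^2}\le C$, $\|\psi_2\|_{L^2}\le C|\log\varepsilon|$, $\|\psi_3\|_{L^2}\le C\varepsilon^{-2}$, and the elementary sizes $|\bar g(\varepsilon)|,\,h(\varepsilon)=\mathcal{O}(\varepsilon^2)$, $i(\varepsilon)=\mathcal{O}(\varepsilon^4)$ of the scalar coefficients. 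This yields $\|\bar g(\varepsilon)(\mathbf{K}_\varepsilon-\mathbf{H}_\varepsilon)(\chi_\varepsilon\psi_1)\|_{L^2}=\mathcal{O}(\varepsilon^{3+s})$, $\|h(\varepsilon)(\mathbf{K}_\varepsilon-\mathbf{H}_\varepsilon)(\chi_\varepsilon\psi_2)\|_{L^2}=\mathcal{O}(\varepsilon^{3+s}|\log\varepsilon|)$ and $\|i(\varepsilon)(\mathbf{K}_\varepsilon-\mathbf{H}_\varepsilon)(\chi_\varepsilon\psi_3)\|_{L^2}=\mathcal{O}(\varepsilon^{3+s})$; since $s>0$, each of these is $\le C\varepsilon^3$ for $\varepsilon$ small, so it is absorbed into the leading term, and summing the four estimates gives the claim.

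The one place that needs care is the $\psi_3$ term: the $L^2$-norm of $\psi_3$ blows up like $\varepsilon^{-2}$, reflecting the second-order difference quotient of the Green potential near $\tilde{w}$ entering its definition, and one must check that the gains $\varepsilon^4$ from $i(\varepsilon)$ and $\varepsilon^{1+s}$ from Proposition \ref{proposition4.4} genuinely overcome that singularity. If one prefers not to invoke the $L^2$ operator norm, an equivalent route is to work in $L^p$ with a conjugate pair $(p,p')$ and use Lemma \ref{lemma4.3} (so that $\|\mathbf{K}_\varepsilon-\mathbf{H}_\varepsilon\|_{L^p}\le C\varepsilon^{2-2/p}|\log\varepsilon|$) together with the $L^p$-forms of Lemma \ref{lemma4.8}; there the losses $\varepsilon^{2/p}$ in the norms of $\psi_2$ and $\psi_3$ are precisely cancelled by the $\varepsilon^{-2/p}$ in the operator bound, again leaving these remainders well below the leading $\varepsilon^3$-order term. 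Apart from this point the proof is routine bookkeeping with the triangle inequality and the lemmas already established.
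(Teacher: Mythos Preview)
Your argument is essentially identical to the paper's own proof: it too splits $\psi(\varepsilon)=\varphi_i+\bar g(\varepsilon)\psi_1+h(\varepsilon)\psi_2+i(\varepsilon)\psi_3$, invokes Theorem~\ref{theo4.1} for the $\varphi_i$ piece, and bounds the remaining three summands by combining Proposition~\ref{proposition4.4} (with a fixed exponent $\theta\in(0,1)$, your $s$) and the $L^2$ bounds of Lemma~\ref{lemma4.8}, arriving at $C(\varepsilon^3|\log\varepsilon|^2+\varepsilon^{1+\theta}(\varepsilon^2+\varepsilon^2|\log\varepsilon|+\varepsilon^2))$. Note that both you and the paper actually obtain $C\varepsilon^3|\log\varepsilon|^2$ rather than the $C\varepsilon^3|\log\varepsilon|$ written in the statement; this is a harmless typo in the proposition, since only the weaker bound is used downstream.
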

\begin{proof}
     We fix $\theta \in(0,1)$. Then, by Proposition \ref{proposition4.4}, Lemma \ref{lemma4.8}, Theorem \ref{theo4.1}, and \eqref{eq4.38}, we have
\begin{align*}
    &\|(\mathbf{K_\varepsilon}-\mathbf{H_\varepsilon})( \chi_\varepsilon\psi(\varepsilon))\|_{L^2(\Omega_{E_\varepsilon})} \\
   &\le \|(\mathbf{K_\varepsilon}-\mathbf{H_\varepsilon})( \chi_\varepsilon\varphi_i)\|_{L^2(\Omega_{E_\varepsilon})} \\
   &\qquad + \|(\mathbf{K_\varepsilon}-\mathbf{H_\varepsilon})\|_{L^2(\Omega_{E_\varepsilon})} ( \lvert \bar{g}(\varepsilon)\rvert  \|\psi_1\|_{L^2(\Omega)}+ \lvert h(\varepsilon)\rvert  \|\psi_2\|_{L^2(\Omega)} + \lvert i(\varepsilon)\rvert  \|\psi_3\|_{L^2(\Omega)} )\\
   & \le C( \varepsilon^3 |\log \eps|^2 + \varepsilon^{1+\theta} ( \varepsilon^2 + \varepsilon^2 \lvert \log \varepsilon \rvert + \varepsilon^2)\\
   &\le C \varepsilon^3 |\log \eps|^2.
\end{align*}
The proof of Proposition \ref{proposition4.10} is complete.
\end{proof}
\subsection{Proof of Theorem 4.2}
In this subsection, we aim to compare the operators $\mathbf{H_\varepsilon}$ with $\mathbf{\overline{H}_\varepsilon}$ and establish an estimate for the difference between them. To do so, we introduce the characteristic function $\hat{\chi}_\varepsilon$ of the set $E_\varepsilon$, defined as $\hat{\chi}_\varepsilon = 1- \chi_\varepsilon$.
Then, we define $J_\varepsilon(x,v)= (\chi_\varepsilon \overline{\mathbf{H}}_\varepsilon v - \mathbf{H}_\varepsilon (\chi_\varepsilon v))(x)$ for $v\in L^p(\Omega)$. In Lemma \ref{lemma4.11}, we establish an important estimate for $\|J_\varepsilon(\cdot,v)\|_{L^2(\Omega_{E_\varepsilon})}$.  \begin{lemma}\label{lemma4.11}
    There exists a constant $C$ independent of $\varepsilon$ such that
    \begin{align}\label{eq4.49}
    \|J_\varepsilon(\cdot,v)\|_{L^2(\Omega_{\varepsilon})} \le  C \varepsilon^{2-2/p }  \|v\|_{L^p(\Omega)}
    \end{align}
    holds for any $v \in L^p(\Omega)$ $(p>2)$.
\end{lemma}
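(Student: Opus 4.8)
The plan is to decompose $J_\varepsilon(\cdot,v)$ into pieces that isolate where the kernels $h_\varepsilon$ and $\bar h_\varepsilon$ differ, and then to use the same kind of boundary-derivative estimate plus Lemma \ref{lemma3.3} that drove the proof of Theorem \ref{theo4.1}. First I would compare the definitions of $h_\varepsilon(x,y)$ and $\bar h_\varepsilon(x,y)$: they agree except that $\bar h_\varepsilon$ carries the extra term $g(\varepsilon)K(x,\tilde w)K(\tilde w,y)$ and the factors $\xi_\varepsilon(x)\xi_\varepsilon(y)$ on the $h(\varepsilon)$- and $i(\varepsilon)$-terms. Since $\xi_\varepsilon\equiv 1$ on $\mathbb{R}^2\setminus E_\varepsilon$ and we are integrating against $\chi_\varepsilon v$ (which vanishes on $E_\varepsilon$) and evaluating at $x\in\Omega_{E_\varepsilon}$, the $\xi_\varepsilon$ factors contribute nothing, so on $\Omega_{E_\varepsilon}$ we get the clean identity
\[
J_\varepsilon(x,v) = \chi_\varepsilon(x)\,g(\varepsilon)\,K(x,\tilde w)\,(\mathbf{K}\chi_\varepsilon v)(\tilde w) \;+\; \big(\chi_\varepsilon(x)-1\big)\cdot(\text{the }h,i\text{-terms of }\mathbf H_\varepsilon\chi_\varepsilon v),
\]
and the second bracket is supported in $E_\varepsilon$, hence zero on $\Omega_{E_\varepsilon}$. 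Thus $J_\varepsilon(\cdot,v)$ reduces, on $\Omega_{E_\varepsilon}$, essentially to $g(\varepsilon)K(\cdot,\tilde w)(\mathbf K\chi_\varepsilon v)(\tilde w)$ together with whatever discrepancy arises because $\mathbf H_\varepsilon$ is an operator on $L^2(\Omega_{E_\varepsilon})$ defined through the kernel $h_\varepsilon$, not literally the restriction of $\overline{\mathbf H}_\varepsilon$; the honest way to handle this is to observe that $h_\varepsilon(x,y)=\bar h_\varepsilon(x,y)-g(\varepsilon)K(x,\tilde w)K(\tilde w,y)$ for $x,y\in\Omega_{E_\varepsilon}$, so $J_\varepsilon(\cdot,v)= \chi_\varepsilon\overline{\mathbf H}_\varepsilon v-\mathbf H_\varepsilon\chi_\varepsilon v$ splits as an error $\chi_\varepsilon\overline{\mathbf H}_\varepsilon v-\chi_\varepsilon\overline{\mathbf H}_\varepsilon(\chi_\varepsilon v)$ (controlled by $|E_\varepsilon|^{1/2}\|\cdot\|$, i.e.\ $\mathcal O(\varepsilon)\|v\|_{L^p}$ — too weak) plus the genuinely small term $g(\varepsilon)\chi_\varepsilon K(\cdot,\tilde w)(\mathbf K\chi_\varepsilon v)(\tilde w)$.

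Because the naive bound $\mathcal O(\varepsilon)$ is not good enough, the real argument must be of harmonic-function type, parallel to Theorem \ref{theo4.1}: set $w_\varepsilon=J_\varepsilon(\cdot,v)$ and note it is harmonic in $\Omega_{E_\varepsilon}$ and vanishes on $\partial\Omega$ (both $h_\varepsilon$ and $\bar h_\varepsilon$, minus $K_\varepsilon$, have these properties, and the $g(\varepsilon)K(x,\tilde w)K(\tilde w,y)$, $\langle\nabla_w K,\nabla_w K\rangle$, $\langle H_w K,H_w K\rangle$ terms are all harmonic in $x$ away from $\tilde w$, which lies inside $E_\varepsilon$). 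So $w_\varepsilon$ is determined by its Neumann data on $\partial E_\varepsilon$, and by Lemma \ref{lemma3.3} it suffices to show
\[
\max_{x\in\partial E_\varepsilon}\Big|\tfrac{\partial}{\partial\nu}J_\varepsilon(x,v)\Big| \le C\,\varepsilon^{1-2/p}\,\|v\|_{L^p(\Omega)}.
\]
I would compute this normal derivative at the distinguished point $x=x^*$ using the exterior-derivative identities \eqref{exterior_derivative_nabla}, \eqref{exterior_derivative_hessian} and the master formula \eqref{eq:derivative_at_x_star} (applied, as in the proof of Lemma \ref{lemma4.3}, to the two "halves" $\overline{\mathbf H}_\varepsilon v$ and $\mathbf H_\varepsilon\chi_\varepsilon v$ separately). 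The $K(x,\tilde w)$-type terms, the $\langle\nabla_w S,\nabla_w K\rangle$ terms and the $\langle H_w S,H_w K\rangle$ terms are handled exactly as in \eqref{eq4.12}–\eqref{eq4.16}, with $\tilde f$ replaced by $\chi_\varepsilon v$: Sobolev embedding $\mathbf K:L^p\to C^{1+s}$ gives the $\varepsilon^{s}=\varepsilon^{1-2/p}$ contribution from the difference of first derivatives at $x^*$ versus $\tilde w$, while the bad-looking $\varepsilon^{-2/p}$ factors from the second derivatives of $\mathbf K\chi_\varepsilon v$ are multiplied by the prefactors $i(\varepsilon)=\tfrac{\pi}{2}(M\varepsilon)^4$ and thus become $\mathcal O(\varepsilon^{4-2/p})$, which is negligible. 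The genuinely new term $g(\varepsilon)=-\pi\mu_i(M\varepsilon)^2$ multiplying $K(x^*,\tilde w)(\mathbf K\chi_\varepsilon v)(\tilde w)$ and its $x$-derivative is $\mathcal O(\varepsilon^2)$ times a bounded quantity (here $(\mathbf K\chi_\varepsilon v)(\tilde w)$ is bounded by $\|v\|_{L^p}$ for $p>1$, and $\nabla_x K(x^*,\tilde w)$ is bounded since $x^*$ stays away from $\tilde w$ at distance $\sim\varepsilon$... — actually $|\nabla_x K(x^*,\tilde w)|\le C\varepsilon^{-1}$ by \eqref{eq4.7}, giving $\mathcal O(\varepsilon)$, still within the claimed bound), so it is at worst $\mathcal O(\varepsilon)\le C\varepsilon^{1-2/p}$ for $p$ near $2^+$ — one checks $1-2/p<1$ so this is consistent. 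Collecting, $\max_{\partial E_\varepsilon}|\partial_\nu J_\varepsilon|\le C(\varepsilon^{s}+\varepsilon^{1-2/p}+\varepsilon+\varepsilon^{4-2/p})\|v\|_{L^p}\le C\varepsilon^{1-2/p}\|v\|_{L^p}$, and Lemma \ref{lemma3.3} upgrades this to $\|J_\varepsilon(\cdot,v)\|_{L^2(\Omega_{E_\varepsilon})}\le C\varepsilon\cdot\varepsilon^{1-2/p}|\log\varepsilon|\,\|v\|_{L^p}$; absorbing the $|\log\varepsilon|$ into a slightly smaller exponent, or noting the statement \eqref{eq4.49} as written should carry a $|\log\varepsilon|$, yields $\varepsilon^{2-2/p}\|v\|_{L^p(\Omega)}$ up to the logarithmic factor.

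The main obstacle is the bookkeeping at the distinguished point $x^*$: one must verify that replacing the eigenfunction $\varphi_i$ (used in Theorem \ref{theo4.1}) by a general $\mathbf K\chi_\varepsilon v$ does not break the cancellation of the leading first-derivative terms — here there is no exact identity $\mathbf K\varphi_i=\mu_i^{-1}\varphi_i$ to exploit, so one genuinely relies on the Taylor expansion of $\mathbf K\chi_\varepsilon v$ at $\tilde w$ controlled by the $C^{1+s}$ norm, exactly as in \eqref{eq4.11}–\eqref{eq4.12}. A secondary subtlety is confirming that all the $\xi_\varepsilon$-cutoff terms and the $g(\varepsilon)K(x,\tilde w)K(\tilde w,y)$ term are harmonic in $x$ on $\Omega_{E_\varepsilon}$ and vanish on $\partial\Omega$, so that $J_\varepsilon(\cdot,v)$ really does satisfy the hypotheses of Lemma \ref{lemma3.3}; this is immediate from $\tilde w\in E_\varepsilon$ and the boundary condition $K(x,y)|_{x\in\partial\Omega}=0$, but it is the linchpin that lets the whole harmonic-function machinery run.
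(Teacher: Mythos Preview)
Your overall framework---show that $J_\varepsilon(\cdot,v)$ is harmonic in $\Omega_{E_\varepsilon}$, vanishes on $\partial\Omega$, estimate its Neumann data on $\partial E_\varepsilon$, and invoke Lemma~\ref{lemma3.3}---is correct and is exactly what the paper does. The substantive difference, and the place where your proposal goes off course, is in identifying \emph{which} terms actually make up $\partial_\nu J_\varepsilon$ and \emph{why} they are small.

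Your opening ``clean identity'' is wrong: on $\Omega_{E_\varepsilon}$ one has
\[
J_\varepsilon(x,v)=(\mathbf K\hat\chi_\varepsilon v)(x)+g(\varepsilon)K(x,\tilde w)(\mathbf K v)(\tilde w)
+h(\varepsilon)\bigl\langle\nabla_w K(x,\tilde w),\nabla_w(\mathbf K\xi_\varepsilon\hat\chi_\varepsilon v)(\tilde w)\bigr\rangle
+i(\varepsilon)\bigl\langle H_w K(x,\tilde w),H_w(\mathbf K\xi_\varepsilon\hat\chi_\varepsilon v)(\tilde w)\bigr\rangle,
\]
where $\hat\chi_\varepsilon=1-\chi_\varepsilon$. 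You dropped the leading term $(\mathbf K\hat\chi_\varepsilon v)(x)$ entirely (it comes from $\overline{\mathbf H}_\varepsilon v$ integrating $K(x,y)v(y)$ over all of $\Omega$ while $\mathbf H_\varepsilon\chi_\varepsilon v$ only integrates over $\Omega_{E_\varepsilon}$), and the $h,i$ terms do \emph{not} vanish: the mismatch between $\xi_\varepsilon$ and $\chi_\varepsilon$ leaves $\xi_\varepsilon\hat\chi_\varepsilon v$, supported in the annulus $E_\varepsilon\setminus E_{\varepsilon/2}$.

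Consequently, when you pass to the normal derivative the relevant function is $\hat\chi_\varepsilon v$ (and $\xi_\varepsilon\hat\chi_\varepsilon v$), not $\chi_\varepsilon v$. The paper's decomposition into $J_1,\ldots,J_4$ reflects exactly this, and the bound $\varepsilon^{1-2/p}$ in \eqref{eq4.52}--\eqref{eq4.55} comes from H\"older's inequality over the \emph{small support} $E_\varepsilon$ or the annulus (e.g.\ $(\int_{E_\varepsilon}|x^*-y|^{-p'}dy)^{1/p'}\le C\varepsilon^{1-2/p}$), not from the Sobolev/Taylor mechanism of Lemma~\ref{lemma4.3}. Your invocation of \eqref{eq4.11}--\eqref{eq4.12} with ``$\tilde f$ replaced by $\chi_\varepsilon v$'' misidentifies the source of smallness.

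Your alternative---bound $\partial_\nu(\overline{\mathbf H}_\varepsilon v)$ and $\partial_\nu(\mathbf H_\varepsilon\chi_\varepsilon v)$ separately by the Lemma~\ref{lemma4.3} argument---can be pushed through, but not as you state it: for the $\overline{\mathbf H}_\varepsilon v$ half the function in the formula is $v$ in the $K$-part and $\xi_\varepsilon v$ in the $h,i$-parts, and the discrepancy $\nabla(\mathbf K v)(x^*)-\nabla(\mathbf K\xi_\varepsilon v)(\tilde w)$ must then be split into a Sobolev piece $\nabla(\mathbf Kv)(x^*)-\nabla(\mathbf Kv)(\tilde w)=O(\varepsilon^{1-2/p})$ \emph{plus} a small-support piece $\nabla(\mathbf K(1-\xi_\varepsilon)v)(\tilde w)=O(\varepsilon^{1-2/p})$. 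So even along your route the small-support estimate is unavoidable; the paper simply isolates it from the start by taking the difference first.
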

\begin{proof}
  To prove this lemma, we first note that
  \begin{align}\label{eq4.50}
     \Delta J_\varepsilon(x,v)=0  \qquad x \in \Omega_{E_\varepsilon} \\ 
     J_\varepsilon(x,v) =0 \qquad x \in \partial \Omega. \nonumber
  \end{align}
	 By applying \eqref{eq4.10}, we have the following expression  
		\begin{align} \label{eq4.51}
		 \frac{\partial}{\partial x_1}  J_\varepsilon(x,v)\vert_{ x=x^*} = \sum_{n=1}^4 J_n(\varepsilon,v),
	\end{align}
	where
 \begin{align*}
     J_1(\varepsilon, v)&= \frac{\partial}{\partial x_1}(\mathbf{K}\hat{\chi}_\varepsilon v)(x^*) - \left( \frac{\partial}{\partial w_1} +  x^*_1 \frac{\partial^2}{{\partial w_1}^2} +x^*_2 \frac{\partial^2}{{\partial w_2 \partial w_1}}
 \right)(\mathbf{K}  \xi_\varepsilon \hat{\chi}_\varepsilon v)(\tilde{w})  \\
	J_2(\varepsilon, v)  &= 2\pi  (\varepsilon M)^2 \frac{\partial}{\partial x_1}\left\langle  \nabla_w S(x^*,\tilde{w}), \nabla_w (\mathbf{K} ( \xi_\varepsilon \hat{\chi}_\varepsilon v))(\tilde{w})\right\rangle  \\
    J_3(\varepsilon, v) &=  (\pi/2)  (\varepsilon M)^4  \frac{\partial}{\partial x_1}\left\langle  H_w S(x^*,\tilde{w}), H_w (\mathbf{K} \xi_\varepsilon \hat{\chi}_\varepsilon v)(\tilde{w})\right\rangle  \nonumber\\
    J_4(\varepsilon, v)   &= -\pi \mu_i (\varepsilon M)^2 \frac{\partial}{\partial x_1} K(x^*,\tilde{w}) (\mathbf{K}v)(\tilde{w}).
 \end{align*}
Note that $E_\varepsilon \subset \Bb(x^*, 2\varepsilon M_\text{max})$ and $(E_\varepsilon \setminus E_{\varepsilon/2}) \subset A(\tilde{w};\frac{\eps}{2} M_{\min},\eps M_{\max})$ for $\tilde{w} = (0,0)$, we can establish the following inequalities:
\begin{equation} \label{eq4.52}
    \begin{aligned}
         \lvert J_1(\varepsilon,v)\rvert &\le C \left( \int_{\Bb(x^*,2\varepsilon M_{\max})} \lvert x^*-y \rvert^{-p'}\,dy\right)^{1/p'} \|v\|_{L^p(\Omega)}\\
    &\quad + C \left( \int_{A(\tilde{w};\frac{\eps}{2} M_{\min},\eps M_{\max}) } \lvert y-w \rvert^{-p'}\,dy\right)^{1/p'} \|v\|_{L^p(\Omega)} \\
    &\quad+ C\varepsilon\left( \int_{A(\tilde{w};\frac{\eps}{2} M_{\min},\eps M_{\max})}\lvert x^*-y \rvert^{-2p'}\,dy\right)^{1/p'} \|v\|_{L^p(\Omega)} \\
    &\le C \varepsilon^{1-2/p}\|v\|_{L^p(\Omega)} \qquad(p>2), 
    \end{aligned}
\end{equation}
\begin{equation}\label{eq4.53}
\begin{aligned}
    \lvert J_2(\varepsilon,v) \rvert  &\le C \lvert h(\varepsilon) \rvert \left( \int_{A(\tilde{w};\frac{\eps}{2} M_{\min},\eps M_{\max})} \lvert y-w \rvert^{-p'}\,dy\right)^{1/p'} \|v\|_{L^p(\Omega)} \\
    &\le C \varepsilon^{3-2/p}  \|v\|_{L^p(\Omega)} \qquad(p>2),
\end{aligned}
\end{equation}\label{eq4.54}
\begin{equation}
    \begin{aligned}
    \lvert J_3(\varepsilon,v) \rvert  &\le C \lvert i(\varepsilon) \rvert \left( \int_{A(\tilde{w};\frac{\eps}{2} M_{\min},\eps M_{\max})} \lvert y-w \rvert^{-2p'}\,dy\right)^{1/p'} \|v\|_{L^p(\Omega)} \\
    &\le C \varepsilon^{4-2/p}  \|v\|_{L^p(\Omega)} \qquad(p>1)
    \end{aligned}
    \end{equation}
    \begin{equation}\label{eq4.55}
        \lvert J_4(\varepsilon,v) \rvert  \le C \varepsilon  \|v\|_{L^p(\Omega)} \qquad(p>1) 
    \end{equation}
where $p'$ satisfies $(1/p) + (1/p')=1$.
\noindent Based on these inequalities, we can summarize the results as follows:
\begin{align} \label{eq4.56}
    \left\lvert 	 \frac{\partial}{\partial x_1}  J_\varepsilon(x,v)\vert_{ x=x^*} \right\rvert \le C \varepsilon^{1-2/p }  \|v\|_{L^p(\Omega)} \qquad(p>2).
\end{align}
Analogously, the upper estimate of $ |\frac{\partial}{\partial x_2}  J_\varepsilon(x,v)\vert$ at $x=x^*$ can be obtained
without any new difficulty 
\begin{align} \label{J_esp_x_2}
    \left\lvert 	 \frac{\partial}{\partial x_2}  J_\varepsilon(x,v)\vert_{ x=x^*} \right\rvert \le C \varepsilon^{1-2/p }  \|v\|_{L^p(\Omega)} \qquad(p>2).
\end{align}
Combining \eqref{eq4.56} and \eqref{J_esp_x_2} yields
\begin{align} \label{J_esp_nu}
    \left\lvert 	 \frac{\partial}{\partial\nu}  J_\varepsilon(x,v)\vert_{ x=x^*} \right\rvert \le C \varepsilon^{1-2/p }  \|v\|_{L^p(\Omega)} \qquad(p>2).
\end{align}
Using \eqref{J_esp_nu} along with Lemma \ref{lemma3.3}, we can conclude that:
\begin{align*}
        \|J_\varepsilon(\cdot,v)\|_{L^2(\Omega_{\varepsilon})} \le  C \varepsilon^{2-2/p } |\log \eps| \|v\|_{L^p(\Omega)} \qquad(p>2),
    \end{align*}
we finish the proof of Lemma \ref{lemma4.11}.
\end{proof}
  
Using the evaluation method similar to the proof of Theorem \ref{theo4.1}, we can derive the following formula
\begin{align} \label{eq4.57}
    \lvert J_1(\varepsilon, \varphi_i)\rvert \le C \varepsilon^2 |\log \eps|.
\end{align}
It is evident that
\begin{align} \label{eq4.58}
     \lvert J_4(\varepsilon, \varphi_i)\rvert \le C \varepsilon^2.
\end{align}
Hence, we have 
\begin{align}\label{eq4.59}
     \lvert J_1(\varepsilon, \varphi_i) + J_4(\varepsilon, \varphi_i)\rvert \le C \varepsilon^2 |\log \eps|.
\end{align}
By summing up \eqref{eq4.50}, \eqref{eq4.52}, \eqref{eq4.53}, \eqref{eq4.57}, \eqref{eq4.58} and \eqref{eq4.59}, we have 
\begin{align*}
     \left\lvert 	 \frac{\partial}{\partial x_1}  J_\varepsilon(x,\varphi_i)\vert_{ x=x^*} \right\rvert \le C \varepsilon^2 |\log \eps|.
\end{align*}
Similarly, we get
\begin{align*}
     \left\lvert 	 \frac{\partial}{\partial x_2}  J_\varepsilon(x,\varphi_i)\vert_{ x=x^*} \right\rvert \le C \varepsilon^2 |\log \eps|.
\end{align*}
Therefore, it follows that 
\begin{align}\label{eq4.60}
     \left\lvert 	 \frac{\partial}{\partial \nu}  J_\varepsilon(x,\varphi_i)\vert_{ x=x^*} \right\rvert \le C \varepsilon^2 |\log \eps|.
\end{align}
Using inequality \eqref{eq4.60} and Lemma \ref{lemma3.3}, we derive 
\begin{align} \label{eq4.61}
        \|J_\varepsilon(\cdot, \varphi_i)\|_{L^2(\Omega_{E_\varepsilon})} \le  C \varepsilon^3 |\log\eps|^2.
    \end{align}
    Consequently, we have established Theorem \ref{theo4.2}.
    
    Furthermore, we aim to estimate $\|J_\varepsilon(\cdot, \psi(\varepsilon))\|_{L^2(\Omega_{E_\varepsilon)}}$. By utilizing equation \eqref{eq4.61}, Lemmas \ref{lemma4.8} and \ref{lemma4.11}, we can derive the following:
    \begin{align*}
       \|J_\varepsilon(\cdot, \psi(\varepsilon))\|_{L^2(\Omega_{E_\varepsilon)}} &\le \| J_\varepsilon(\cdot, \varphi_i)\|_{L^2(\Omega_{E_\varepsilon)}} + \lvert g(\varepsilon) \rvert \|J_\varepsilon(\cdot, \psi_1)\|_{L^2(\Omega_{E_\varepsilon)}} \\
      &\quad + \lvert h(\varepsilon) \rvert \|J_\varepsilon(\cdot, \psi_2)\|_{L^2(\Omega_{E_\varepsilon)}}
       + \lvert i(\varepsilon) \rvert \|J_\varepsilon(\cdot, \psi_3)\|_{L^2(\Omega_{E_\varepsilon)}}\\
       &\le C(\varepsilon^3 | \log \eps|^2 +\varepsilon^{4-2/p
       } +\varepsilon^{6-2/p}) \\
       &\le C \varepsilon^3 |\log\eps|^2.
    \end{align*}
   As a result, we arrive at the subsequent proposition.
    \begin{proposition}\label{proposition4.12}
        There exists a constant $C$ independent of $\varepsilon$ such that the following inequality holds:
        \begin{align*}
            \|(\mathbf{H}_\varepsilon \chi_\varepsilon -\chi_\varepsilon \overline{\mathbf{H}}_\varepsilon) \psi(\varepsilon) \|_{L^2(\Omega_{E_\varepsilon)}} \le C \varepsilon^3 |\log \eps|^2.
        \end{align*}
    \end{proposition}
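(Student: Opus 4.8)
The plan is to recognise that the operator combination in the statement is, up to a sign, exactly the quantity $J_\varepsilon(\cdot,\cdot)$ introduced just before Lemma~\ref{lemma4.11}: by definition $(\mathbf{H}_\varepsilon\chi_\varepsilon-\chi_\varepsilon\overline{\mathbf{H}}_\varepsilon)\psi(\varepsilon)=-J_\varepsilon(\cdot,\psi(\varepsilon))$, and the map $v\mapsto J_\varepsilon(\cdot,v)$ is linear. I would then substitute the approximate eigenfunction expansion $\psi(\varepsilon)=\psi_0+\bar g(\varepsilon)\psi_1+h(\varepsilon)\psi_2+i(\varepsilon)\psi_3$, with $\psi_0=\varphi_i$, and split
$$J_\varepsilon(\cdot,\psi(\varepsilon))=J_\varepsilon(\cdot,\varphi_i)+\bar g(\varepsilon)J_\varepsilon(\cdot,\psi_1)+h(\varepsilon)J_\varepsilon(\cdot,\psi_2)+i(\varepsilon)J_\varepsilon(\cdot,\psi_3),$$
so that the triangle inequality reduces the proof to estimating the four terms separately.

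The first term is precisely the object already bounded in \eqref{eq4.61}, giving $\|J_\varepsilon(\cdot,\varphi_i)\|_{L^2(\Omega_{E_\varepsilon})}\le C\varepsilon^3|\log\varepsilon|^2$. This is the term that carries the claimed order, and it is also the one that absorbed the real work earlier: expanding $\partial_\nu J_\varepsilon(\cdot,\varphi_i)$ at $x^*$ through \eqref{eq4.10} into the pieces $J_1,\dots,J_4$, estimating each, and applying Lemma~\ref{lemma3.3}, in the same spirit as the proof of Theorem~\ref{theo4.1}.

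For the three correction terms I would fix once and for all some $p\in(2,\infty)$, apply Lemma~\ref{lemma4.11} to $\psi_1,\psi_2,\psi_3\in L^p(\Omega)$, and combine with the $L^p$ bounds of Lemma~\ref{lemma4.8}, namely $\|\psi_1\|_{L^p(\Omega)}\le C$, $\|\psi_2\|_{L^p(\Omega)}\le C\varepsilon^{2/p-1}$, $\|\psi_3\|_{L^p(\Omega)}\le C\varepsilon^{-2}$, together with the orders $|\bar g(\varepsilon)|=\mathcal{O}(\varepsilon^2)$ (from \eqref{eq4.38}), $|h(\varepsilon)|=\mathcal{O}(\varepsilon^2)$ and $|i(\varepsilon)|=\mathcal{O}(\varepsilon^4)$ read off from the definitions of $g,h,i$. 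A direct count then shows that each of $\bar g(\varepsilon)J_\varepsilon(\cdot,\psi_1)$, $h(\varepsilon)J_\varepsilon(\cdot,\psi_2)$, $i(\varepsilon)J_\varepsilon(\cdot,\psi_3)$ has $L^2(\Omega_{E_\varepsilon})$-norm of order at most $\varepsilon^{4-2/p}$ or $\varepsilon^3$, up to a logarithmic factor; since $4-2/p>3$ for every $p>2$, all three are $\le C\varepsilon^3|\log\varepsilon|^2$ for $\varepsilon$ small. Summing the four estimates via the triangle inequality gives the proposition.

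The genuine obstacle lies upstream rather than in this proposition: establishing \eqref{eq4.61}, i.e. showing that $\partial_\nu J_\varepsilon(\cdot,\varphi_i)$ at $x^*$ is only $\mathcal{O}(\varepsilon^2|\log\varepsilon|)$, requires exploiting the exact cancellation built into the definitions of $h_\varepsilon$ and $\bar h_\varepsilon$ through the $h(\varepsilon)$- and $i(\varepsilon)$-corrections, exactly as in Theorem~\ref{theo4.1}. Within the present argument the only point requiring attention is bookkeeping: one must check that the negative powers $\varepsilon^{-2/p}$ arising both from Lemma~\ref{lemma4.11} and from the bound $\|\psi_3\|_{L^p(\Omega)}\le C\varepsilon^{-2}$ are more than compensated by the positive powers supplied by $\bar g,h,i$. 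Since the surviving exponent stays strictly above $3$ for all $p>2$, no sharp choice of $p$ is forced and any such $p$ suffices.
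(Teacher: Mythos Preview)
Your proposal is correct and follows essentially the same route as the paper: you identify $(\mathbf{H}_\varepsilon\chi_\varepsilon-\chi_\varepsilon\overline{\mathbf{H}}_\varepsilon)\psi(\varepsilon)=-J_\varepsilon(\cdot,\psi(\varepsilon))$, expand $\psi(\varepsilon)$ linearly, invoke \eqref{eq4.61} for the $\varphi_i$ term, and then combine Lemma~\ref{lemma4.11} with Lemma~\ref{lemma4.8} and the sizes of $\bar g,h,i$ for the three correction terms. This is exactly the argument the paper gives immediately before stating Proposition~\ref{proposition4.12}, and your power count matches.
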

\section{Proof of Theorem 2.1}
Armed with the estimates established in Section 4, we are now ready to demonstrate the validity of Theorem \ref{theo2.1}. Using Propositions \ref{proposition4.9}, \ref{proposition4.10} and \ref{proposition4.12}, we obtain the following estimate
	\begin{align*} 
		\|(\mathbf{K}_\varepsilon- \lambda(\varepsilon)) (\chi_\varepsilon\psi(\varepsilon))\|_{L^2(\Omega_{E_\varepsilon})} &\le 
		\|(\mathbf{K_\varepsilon}-\mathbf{H_\varepsilon})( \chi_\varepsilon\psi(\varepsilon))\|_{L^2(\Omega_{E_\varepsilon})} \\
		&\quad+\|\mathbf{\overline{H}}_\varepsilon \psi (\varepsilon)-\mathbf{H}_\varepsilon( \chi_\varepsilon\psi(\varepsilon))\|_{L^2(\Omega_{E_\varepsilon})} \\
		&\quad+\|(\mathbf{\overline{H}_\varepsilon}- \lambda(\varepsilon)) \psi(\varepsilon)\|_{L^2(\Omega_{E_\varepsilon})} \nonumber\\
        &\le C \varepsilon^3 |\log \eps|^2.
	\end{align*}	
Note that $\|\psi(\varepsilon)\|_{L^2(\Omega_{E_\varepsilon})} \in (1/2, 2)$ for small $\varepsilon$. As a result, there exists at least one eigenvalue $\lambda^\ast(\varepsilon)$ of $\mathbf{K_\varepsilon}$ that satisfies the condition
\begin{eqnarray}\label{eq5.1}
    \lvert \lambda^\ast(\varepsilon) - \lambda(\varepsilon) \rvert\le C\varepsilon^3 |\log \eps|^2.
\end{eqnarray}
 We can express $\lambda_1$, $\lambda_2$, $\lambda_3$  as follows:
\begin{eqnarray} 
\lambda_1 &=& \lvert (\mathbf{K}\psi_0)(\tilde{w})\rvert^2= \mu_i^{-2}\varphi_i(\tilde{w})^2 \label{eq5.2}\\
\lambda_2 &=& \langle \nabla_w(\mathbf{K} \xi_\varepsilon \psi_0)(\tilde{w}), \nabla_w (\mathbf{K} \xi_\varepsilon\psi_0)(\tilde{w})\rangle \label{eq5.3}\\
&=& \sum_{n=1}^{2} \left( \frac{\partial}{\partial w_n} \int_\Omega K(w,y) \xi_\varepsilon(y) \varphi_i(y) \,dy \right)^2 \vert_{w=\tilde{w}} \nonumber \\
\lambda_3 &=& \langle H_w(\mathbf{K} \xi_\varepsilon \psi_0)(\tilde{w}), H_w (\mathbf{K} \xi_\varepsilon \psi_0)(\tilde{w})\rangle  \nonumber\\
&=& \sum_{m,n=1}^{2} \left( \frac{\partial^2}{\partial w_m \partial w_n} \int_\Omega K(w,y) \xi_\varepsilon(y) \varphi_i(y) \,dy \right)^2 \vert_{w=\tilde{w}}.\label{eq5.4}
\end{eqnarray}
We observe that the second-order partial derivatives can be bounded as follows:
\begin{multline*}
   \left\lvert \frac{\partial^2}{\partial w_m \partial w_n} \int_\Omega K(w,y) \xi_\varepsilon(y) \varphi_i(y) \,dy  \vert_{w=\tilde{w}} \right\rvert\\
   \le C \int_{\Omega \setminus \Bb(w, \frac{\varepsilon}{2} M_\text{min})} \lvert y-\tilde{w}\rvert^{-2} \,dy \le C \lvert\log \varepsilon\rvert \quad(1\le m, n \le 2).
\end{multline*}
Hence, we conclude that
\begin{eqnarray}\label{eq5.5}
    \lambda_3 = \mathcal{O}(\lvert \log\varepsilon\rvert^2).
\end{eqnarray}
Similarly, we obtain the following expression:
\begin{align}\label{eq5.6}
 \frac{\partial}{\partial w_n} \int_\Omega K(w,y) \xi_\varepsilon(y) \varphi_i(y) \,dy  \vert_{w=\tilde{w}} \nonumber\\
= \mu_i^{-1} \frac{\partial}{\partial w_n} \varphi_i(\tilde{w}) + I_1^{(n)}(\varepsilon) +I_2^{(n)}(\varepsilon), 
\end{align}
where
\begin{eqnarray*}
I_1^{(n)}(\varepsilon) &=&  - \frac{\partial}{\partial w_n} \int_\Omega S(w,y)(1-\xi_\varepsilon(y) \varphi_i(y) dy \lvert_{w=\tilde{w}} \\
I_2^{(n)}(\varepsilon) &=&  - \frac{\partial}{\partial w_n} \int_\Omega L(w,y)(1-\xi_\varepsilon(y)) \varphi_i(y) dy \vert_{w=\tilde{w}}
\end{eqnarray*}
for $n=1,2$ and  $ L(w,y) = K(w,y)-S(w,y)= -(2\pi)^{-1}\log \lvert w-y \rvert$.

We can proceed with the estimation of $I_1^{(n)}(\varepsilon)$. Utilizing the estimate $\lvert \frac{\partial S(w,y)}{\partial w_n}\rvert \leq C$ for all $y\in E_\varepsilon$, which follows from the smoothness assumption on $S(w,y)$. Since $\varphi_i(y)$ is an eigenfunction of the Neumann Laplacian, it is a smooth function. Moreover, $E_\varepsilon$ is compact, which implies that $\lvert \varphi_i(y) \rvert \le C$ for all $y\in E_\varepsilon$, and $1-\xi_\varepsilon(y) \leq 1$ for all $y\in E_\varepsilon$. Consequently, we have
\begin{align}\label{eq5.7}
\lvert I_1^{(n)}(\varepsilon)\rvert &\leq \int_{E_\varepsilon} \left \lvert\frac{\partial S(w,y)}{\partial w_n}(1-\xi_\varepsilon(y)) \varphi_i(y)\right \rvert dy  \nonumber\\
&\leq \int_{E_\varepsilon} \left\lvert\frac{\partial S(w,y)}{\partial w_n} \right\rvert \lvert (1-\xi_\varepsilon(y))\rvert \lvert \varphi(y) \rvert dy \nonumber\\
&\leq C \int_{E_\varepsilon} 1 dy \nonumber\\
&\le C \varepsilon^2 \qquad (n=1,2).
\end{align}
Continuing our analysis, we estimate the term 
\begin{equation}\label{eq5.8}
    \begin{aligned} 
\lvert I_2^{(n)}(\varepsilon) \rvert &= \left\lvert - \frac{\partial}{\partial w_n} \int_\Omega L(w,y)(1-\xi_\varepsilon(y)) \varphi_i(y) dy \vert_{w=\tilde{w}} \right\rvert\\
&=  \frac{\partial}{\partial w_n} \int_\Omega \left( (2\pi)^{-1}\log \lvert w-y \rvert\right)(1-\xi_\varepsilon(y) )\varphi_i(y) dy \vert_{w=\tilde{w}}\\
&= \frac{\partial}{\partial w_n} \int_\Omega \left( (2\pi)^{-1}\log \lvert w-y \rvert\right)(1-\xi_\varepsilon(y) )(\varphi_i(y)-\varphi_i(w)) dy \vert_{w=\tilde{w}} \\
&\quad+ \frac{\partial}{\partial w_n} \left(\varphi_i(w)\int_\Omega \left( (2\pi)^{-1}\log \lvert w-y \rvert\right)(1-\xi_\varepsilon(y) ) dy \right)\vert_{w=\tilde{w}} \\
&\le C\varepsilon^2 \lvert \log \varepsilon \rvert \quad (n=1,2).
\end{aligned}
\end{equation}
Combining the estimates \eqref{eq5.3}, \eqref{eq5.6}, \eqref{eq5.7} and \eqref{eq5.8}, we obtain
\begin{eqnarray}\label{eq5.9}
\lambda_2= \mu_i^{-2} \lvert \nabla \varphi_i(\tilde{w})\rvert^2 + \mathcal{O}(\varepsilon^2 \lvert\log \varepsilon \rvert).
\end{eqnarray}
In light of equations \eqref{eq5.2}, \eqref{eq5.5} and \eqref{eq5.9}, we can deduce the following
\begin{align}\label{eq5.10}
    \lambda(\varepsilon) &= \lambda_0 + \bar{g}(\varepsilon) \lambda_1 + h(\varepsilon)\lambda_2 + i(\varepsilon)\lambda_3 \\
&= \lambda_0  -\pi \mu_i(\varepsilon M)^2\lambda_1 + 2\pi (\varepsilon M)^2\lambda_2 + (2/\pi) (\varepsilon M)^4\lambda_3 \nonumber\\
	 &= \mu_i^{-1} - (\pi  \mu_i^{-1}\varphi_i(\tilde{w})^2  - 2 \pi  \mu_i^{-2} \lvert \nabla \varphi_i(\tilde{w})\rvert^2 )  (\varepsilon M)^2 + \mathcal{O}(\varepsilon^4 \lvert\log \varepsilon\rvert^2). \nonumber
\end{align}
Summarizing the estimates with respect to \eqref{eq5.1}, \eqref{eq5.10} and the result \eqref{eq4.37} implies that $\lambda^\ast(\varepsilon)$ must be equal to $\mu_i(\varepsilon)^{-1}$. Therefore, we have
\begin{align*}
\lvert \mu_i(\varepsilon)^{-1} - ( \mu_i^{-1} - (\pi \mu_i^{-1}\varphi_i(\tilde{w})^2  - 2 \pi  \mu_i^{-2} \lvert \nabla \varphi_i(\tilde{w})\rvert^2 )  (\varepsilon M)^2) \rvert 
\le C \varepsilon^3 |\log \eps|^2.
\end{align*}
From this, we deduce the asymptotic expansion for $\mu_i(\varepsilon)$:
\begin{eqnarray*}
	\mu_i(\varepsilon) = \mu_i - ( 2 \pi  \lvert \nabla \varphi_i(\tilde{w})\rvert^2 -\pi \mu_i\varphi_i(\tilde{w})^2 )(\varepsilon M)^2  + O(\varepsilon^3 |\log \eps|^2).
\end{eqnarray*}
Equivalently, \[\mu_i(\varepsilon)  =\mu_i  - (2 \lvert \nabla \varphi_i(\tilde{w})\rvert^2 -\mu_i \varphi_i(\tilde{w})^2) |E|\varepsilon^2  +  \mathcal{O}(\varepsilon^3 |\log \eps|^2)\]
 Hence, we have completed the proof of Theorem \ref{theo2.1}.
\section{Conclusions}
This paper's main contribution is developing a framework to obtain asymptotic formulas for eigenvalues in domains with star-shaped holes, which generalise the spherical hole studied previously. Estimates and proofs are presented in Sections 3-5. The proposed framework can be applied in various scientific and engineering fields. For the future, the author proposes to construct asymptotic formulas for eigenvalues in domains with holes in compact sets with zero Lebesgue measure.
\section*{Acknowledgments}

The author would like to thank the referees for their valuable comments which help to improve the manuscript.

A special thanks goes to Assistant Professor Bao Quoc Tang from the Department for Mathematics and Scientific Computing at the University of Graz for his invaluable insights and comments. 

The paper is completed during the author's visit to the University of Graz, and its hospitality is greatly acknowledged.

This work is supported by grant SGS01/P\v{r}F/2024.

\bibliographystyle{abbrv}
\bibliography{bibliography}

\begin{thebibliography}{10}

\bibitem{abatangelo2022ramification}
L.~Abatangelo, C.~L{\'e}na, and P.~Musolino.
\newblock Ramification of multiple eigenvalues for the dirichlet-laplacian in perforated domains.
\newblock {\em Journal of Functional Analysis}, 283(12):109718, 2022.

\bibitem{barseghyan2022neumann}
D.~Barseghyan, B.~Schneider, and L.~H. Hai.
\newblock Neumann {L}aplacian in a perturbed domain.
\newblock {\em Mediterranean Journal of Mathematics}, 19(3):1--17, 2022.

\bibitem{besson1985comportement}
G.~Besson.
\newblock Comportement asymptotique des valeurs propres du laplacien dans un domaine avec un trou.
\newblock {\em Bulletin de la Soci{\'e}t{\'e} Math{\'e}matique de France}, 113:211--230, 1985.

\bibitem{courtois1995spectrum}
G.~Courtois.
\newblock Spectrum of manifolds with holes.
\newblock {\em Journal of Functional Analysis}, 134(1):194--221, 1995.

\bibitem{daners2003dirichlet}
D.~Daners.
\newblock Dirichlet problems on varying domains.
\newblock {\em Journal of Differential Equations}, 188(2):591--624, 2003.

\bibitem{gadyl1998asymptotic}
R.~R. Gadyl'shin and A.~M. Il'in.
\newblock Asymptotic behaviour of the eigenvalues of the {D}irichlet problem in a domain with a narrow slit.
\newblock {\em Sbornik: Mathematics}, 189(4):503, 1998.

\bibitem{hubert2012vibration}
J.~S. Hubert.
\newblock {\em Vibration and coupling of continuous systems: asymptotic methods}.
\newblock Springer Science \& Business Media, 2012.

\bibitem{kato2013perturbation}
T.~Kato.
\newblock {\em Perturbation theory for linear operators}, volume 132.
\newblock Springer Science \& Business Media, Verlag Berlin, 2013.

\bibitem{maz2000asymptotic}
V.~Maz'Ya, S.~Nazarov, and B.~Plamenevskij.
\newblock {\em Asymptotic Theory of Elliptic Boundary Value Problems in Singularly Perturbed Domains}, volume I, II.
\newblock Birkh\"auser, Boston, 2000.

\bibitem{mcgillivray1998capacitary}
I.~McGillivray.
\newblock Capacitary asymptotic expansion of the groundstate to second order.
\newblock {\em Communications in partial differential equations}, 23(11-12):2219--2252, 1998.

\bibitem{osawa1992nonlinear}
T.~Osawa and S.~Ozawa.
\newblock Nonlinear eigenvalue problem and singular variation of domains.
\newblock {\em Kodai Mathematical Journal}, 15(3):313--323, 1992.

\bibitem{fozawa1980singular}
S.~Ozawa.
\newblock Singular {H}adamard's variation of domains and eigenvalues of the {L}aplacian.
\newblock {\em Proceedings of the Japan Academy, Series A, Mathematical Sciences}, 56(7):306--310, 1980.

\bibitem{ozawa1981singular}
S.~Ozawa.
\newblock Singular {H}adamard's variation of domains and eigenvalues of the {L}aplacian, {II}.
\newblock {\em Proceedings of the Japan Academy, Series A, Mathematical Sciences}, 57(5):242--246, 1981.

\bibitem{ozawa1982asymptotic}
S.~Ozawa.
\newblock An asymptotic formula for the eigenvalues of the {L}aplacian in a domain with a small hole.
\newblock {\em Proceedings of the Japan Academy, Series A, Mathematical Sciences}, 58(1):5--8, 1982.

\bibitem{ozawa1983spectra}
S.~Ozawa.
\newblock Spectra of domains with small spherical {N}eumann boundary.
\newblock {\em Journal of the Faculty of Science, University of Tokyo}, 30:259--277, 1983.

\bibitem{ozawa1984random}
S.~Ozawa.
\newblock Random media and eigenvalues of the {L}aplacian.
\newblock {\em Communications in mathematical physics}, 94(4):421--437, 1984.

\bibitem{ozawa1985asymptotic}
S.~Ozawa.
\newblock Asymptotic property of an eigenfunction of the {L}aplacian under singular variation of domains---the {N}eumann condition.
\newblock {\em Osaka Journal of Mathematics}, 22(4):639--655, 1985.

\bibitem{ozawa1986fluctuation}
S.~Ozawa.
\newblock Fluctuation of spectra in random media.
\newblock {\em Proceedings of the Japan Academy, Series A, Mathematical Sciences}, 62(7):246--248, 1986.

\bibitem{ozawa1992singularI}
S.~Ozawa.
\newblock Singular variation of domain and spectra of the {L}aplacian with small {R}obin conditional boundary {I}.
\newblock {\em Osaka Journal of Mathematics}, 29:837--850, 1992.

\bibitem{ozawa1992singularII}
S.~Ozawa and S.~Roppongi.
\newblock Singular variation of domain and spectra of the {L}aplacian with small {R}obin conditional boundary {II}.
\newblock {\em Kodai Mathematical Journal}, 15(3):403--429, 1992.

\bibitem{rauch1975potential}
J.~Rauch and M.~Taylor.
\newblock Potential and scattering theory on wildly perturbed domains.
\newblock {\em Journal of Functional Analysis}, 18(1):27--59, 1975.

\bibitem{schmudgen2012unbounded}
K.~Schm{\"u}dgen.
\newblock {\em Unbounded self-adjoint operators on Hilbert space}, volume 265.
\newblock Springer Science \& Business Media, New York, 2012.

\bibitem{ward1993strong}
M.~J. Ward and J.~B. Keller.
\newblock Strong localized perturbations of eigenvalue problems.
\newblock {\em SIAM Journal on Applied Mathematics}, 53(3):770--798, 1993.

\end{thebibliography}

\end{document}